\newcommand{\ip}[1]{\mathopen{\langle}#1\mathclose{\rangle}_A}
\newtheorem{Thm}{Theorem}[section]
\newtheorem{Prop}[Thm]{Proposition}
\newtheorem{Lem}[Thm]{Lemma}
\theoremstyle{definition}
\newtheorem{Rem}[Thm]{Remark}
\newtheorem{Exm}[Thm]{Example}
\newcommand{\Cs}{C$^\ast$}
\newcommand{\T}{\mathcal{T}_{\mathcal{E}_{\fr}}}
\newcommand{\C}{\mathcal{O}_{\mathcal{F}_{\fu}}}
\newcommand{\id}{\mbox{\rm id}}
\newcommand{\IB}{\mathbb B}
\newcommand{\IC}{\mathbb C}
\newcommand{\IF}{\mathbb F}
\newcommand{\IK}{\mathbb K}
\newcommand{\IN}{\mathbb N}
\newcommand{\IR}{\mathbb R}
\newcommand{\IT}{\mathbb T}
\newcommand{\fN}{\mathfrak{N}}
\newcommand{\IZ}{\mathbb Z}
\newcommand{\cB}{\mathcal B}
\newcommand{\cF}{{\mathcal F}_{\mathfrak{u}}}
\newcommand{\cS}{\mathcal S}
\newcommand{\cU}{\mathcal U}
\newcommand{\fx}{\mathfrak x}
\newcommand{\fl}{\mathfrak l}
\newcommand{\n}{\mathfrak n}
\newcommand{\fs}{\mathfrak s}
\newcommand{\fS}{\mathfrak S}
\newcommand{\fu}{\mathfrak u}
\newcommand{\m}{\mathfrak m}
\newcommand{\fk}{\mathfrak k}
\newcommand{\fr}{\mathfrak r}
\newcommand{\fy}{\mathfrak y}
\newcommand{\E}{{\mathcal E}_\mathfrak{r}}
\newcommand{\cE}{\mathcal E}
\newcommand{\cZ}{\mathcal Z}
\newcommand{\cI}{\mathcal I}
\newcommand{\cM}{\mathcal M}
\newcommand{\acts}{\curvearrowright}
\DeclareMathOperator{\spa}{\mathop{span}}
\DeclareMathOperator{\Ped}{Ped}
\DeclareMathOperator{\Cso}{{\rm C}^\ast}
\title[]{Amenable actions on finite simple \Cs-algebras arising from flows on Pimsner algebras}
\author{Yuhei Suzuki}
\subjclass[2020]{Primary~
46L55, Secondary~46L35}
\keywords{Non-commutative amenable actions, stably finite \Cs-algebras, Pimsner algebras.}
\address{Department of Mathematics, Faculty of Science, Hokkaido University,
Kita 10, Nishi 8, Kita-Ku, Sapporo, Hokkaido, 060-0810, Japan}
\dedicatory{In honor of Professor Eberhard Kirchberg (1946--2022).}
\email{yuhei@math.sci.hokudai.ac.jp}
\begin{document}
\maketitle

\begin{abstract}Associated to a family of $G$-$\ast$-endomorphisms on a $G$-\Cs-algebra $A$ satisfying certain minimality conditions,
we give a $G$-\Cs-correspondence $\mathcal{E}$ over $A$ whose Cuntz--Pimsner algebra $\mathcal{O}_\mathcal{E}$ is simple.
For certain quasi-free flows $\gamma$ (commuting with the $G$-action) on $\mathcal{O}_\mathcal{E}$,
we further prove the simplicity of the reduced crossed product $\mathcal{O}_\mathcal{E} \rtimes_\gamma \IR$.
We then classify the KMS weights of $\gamma$.
This in particular gives a sufficient condition for $\mathcal{O}_\mathcal{E}$ and $\mathcal{O}_\mathcal{E}\rtimes_\gamma\IR$ to be stably finite
(and to be stably projectionless).
As the amenability of $G\acts A$ inherits to the induced actions $G\acts \mathcal{O}_\mathcal{E}, \mathcal{O}_\mathcal{E}\rtimes_\gamma\IR$,
this provides a new systematic framework to provide amenable actions on stably finite simple \Cs-algebras. 
\end{abstract}

\tableofcontents

\section{Introduction}
Amenability of group actions on \emph{non-commutative} \Cs-algebras is now a central subject in the classification theory of \Cs-algebras.
Such actions were constructed only recently in \cite{Suzeq} (see also \cite{OS}, \cite{Suzsf} for more sophisticated constructions),
but it should be now clear that amenability of actions, not of the acting groups, is the genuine ingredient for classification of group actions on (simple) \Cs-algebras.
For general theory, backgrounds, and other applications of amenable actions on \Cs-algebras, we refer the reader to \cite{AD}, \cite{BEW2}, \cite{OS}, and references therein.

In the realm of \emph{Kirchberg algebras} \cite{Kir},
the dynamical version of Kirchberg's $\mathcal{O}_2$- and $\mathcal{O}_\infty$-tensor absorption theorems and $\mathcal{O}_2$-embedding theorem are established in \cite{Suz21} for exact countable groups
(see \cite{KP} for the original theorem, \cite{Sza} for the amenable group case, and \cite{GS2} for the non-discrete case),
and more surprisingly and significantly, the dynamical version of the Kirchberg--Phillips classification theorem (\cite{Kir}, \cite{Phi}) for amenable actions
is established by Gabe and Szab\'o \cite{GS1}, \cite{GS2}.
The latter theorem basically settles all open problems on group actions on Kirchberg algebras, in much better forms than previously conjectured.
We refer the reader to \cite{GS1}, \cite{GS2} for details.
For some backgrounds, developments, and history of this subject before the Gabe--Szab\'o classification theorem,
we refer the reader to the introduction of \cite{Sza}.

Now, considering these astonishing achievements, we naturally obtain a new next important goal in classification theory of \Cs-algebras:
Extend the Gabe--Szab\'o classification theorem to \emph{classifiable} simple \Cs-algebras (see e.g., \cite{Win}, \cite{GLN}) \emph{beyond} Kirchberg algebras.
However, as the constructions of \emph{stably finite} simple \Cs-algebras (see \cite{Ell}, \cite{EV}, \cite{GLN} for instance) are typically much more complicated, involved, and delicate than the purely infinite case,
naturally it is more difficult to construct amenable actions (of non-amenable groups) on stably finite simple \Cs-algebras.
Indeed, such actions were discovered only very recently in \cite{Suzsf}.
(We note that the existence of such an action on a \emph{unital} stably finite simple \Cs-algebra still remains unsolved.)
As the construction therein depends on various specific ingredients (the right Laplacian on the Cayley graph, the full Fock space of the regular representation, and a trace-scaling automorphism on a simple \Cs-algebra), it seems difficult to control and determine the invariants (most importantly, the equivariant Kasparov classes \cite{Kas}) of the resulting actions in general.

The purpose of the present article is to give a systematic framework to provide amenable actions on stably finite simple \Cs-algebras.
As it turns out, our framework is particularly effective to produce amenable actions on simple \emph{stably projectionless} \Cs-algebras.
Since the structure of simple stably projectionless \Cs-algebras is relatively simpler than the other stably finite simple \Cs-algebras
(for instance, the order structure on the K$_0$-group is trivial in the former class), the next natural (possibly tractable) target in this theory would be
the classification of amenable actions on simple stably projectionless \Cs-algebras.
A positive evidence was recently established by Nawata \cite{Naw}, in which he proved the dynamical version of the $\mathcal{W}$-tensor absorption theorem
for \emph{all} countable amenable groups.
It should be also worth to note that some dynamical analogues of the $\mathcal{Z}$-tensor absorption theorem
were recently studied for general countable amenable groups by many authors.
For this topic, we refer the reader to \cite{Sat}, \cite{SW}, and references therein.
Some of these approaches would have a potential to be extended to amenable actions of not-necessarily-amenable groups
after suitable improvements and reformulations.
(We however remark that amenable actions of non-amenable groups have
non-trivial obstructions on important invariants, notably on the induce action on the trace space --- see e.g., \cite{AD79}, Proposition 3.5 of \cite{OS}, and Remark 2.1 (3) of \cite{Suzsf} --- and on the equivariant Kasparov class --- see \cite{OS}, Proposition 6.5.
Therefore not all of these results, particularly \cite{Naw}, are directly extended to general amenable actions.)

We next briefly view our new constructions that will be established in this article.
Our main ingredients are (equivariant) \Cs-correspondences and flows on them.
This may be seen as a hybrid of the constructions in \cite{OS} and \cite{Suzsf}.
Recall that in \cite{OS}, we establish a powerful framework to provide amenable actions
on purely infinite simple \Cs-algebras.
More precisely, we have shown that the construction of Pimsner--Kumjian--Meyer (\cite{Pim}, \cite{Kum}, \cite{Mey})
preserves the amenability of the action.
(See Theorems 5.1 and 6.1 of \cite{OS} for the precise statements.)
The construction always gives purely infinite simple \Cs-algebras
and does not change important properties and invariants (notably it preserves the equivariant Kasparov class), whence this gives the desired construction in the purely infinite case.
To compare this construction with our new constructions, let us first review the Pimsner--Kumjian--Meyer construction.
For a $G$-\Cs-algebra $A$,
take a faithful equivariant $\ast$-representation $\pi \colon A \rightarrow \IB(H)$ on a $G$-Hilbert space $H$ with no non-trivial compact elements in the image.
For each such a representation, Meyer defined the $G$-\Cs-correspondence $\cE$ over $A$,
which is equal to $H \otimes A$ as a right Hilbert \Cs-$A$-module,
whose left $A$-action is given by $\pi \otimes 1_A$.
(This is an equivariant version of Kumjian's construction \cite{Kum}.)
Kumjian \cite{Kum} has shown that the Toeplitz--Pimsner algebras of these $\cE$
are purely infinite simple.
Since the Toeplitz--Pimsner construction preserves the (equivariant) Kasparov class (see \cite{Pim} and \cite{Mey}),
this gives a useful construction of amenable actions on purely infinite simple \Cs-algebras.

However, as proved by Kumjian \cite{Kum}, this construction never gives rise to a stably finite \Cs-algebra.
Also, because $A$ acts on the Hilbert space side $H$ of $\cE=H \otimes A$,
it seems hard to provide a flow on the $G$-\Cs-correspondence $\mathcal{E}$
whose induced flow on the Pimsner algebra admits a KMS weight, as the existence of a KMS weight is usually a rather delicate condition.

To settle these difficulties, we introduce a new $G$-\Cs-correspondence,
which is associated to a family $\fr$ of $G$-equivariant $\ast$-endomorphisms on a given $G$-\Cs-algebra.
Informally speaking, the resulting \Cs-algebra can be seen as the crossed product of $\fr$ by $\mathcal{O}_{|\fr|}$; ``$A \rtimes _{\fr} \mathcal{O}_{|\fr|}$''.
(In fact these \Cs-algebras are studied by Cuntz \cite{Cun} for a different purpose, under the additional assumption that $\fr$ consists of mutually commuting $\ast$-automorphisms.) 
A main theorem of the present article shows that
under a mild minimality condition on $\fr$, the resulting Cuntz--Pimsner algebra
and the reduced crossed product \Cs-algebras of certain quasi-free flows
are simple.
Then we establish a classification theorem of KMS weights for certain quasi-free flows.
This gives a useful criterion for the stable finiteness (and for being stably projectionless) of the resulting simple \Cs-algebras.
This sheds new light on the construction of non-commutative amenable actions:
Our new approach clarifies a new importance of the study of non-discrete group actions,
even when one is only interested in discrete groups.
(Such importance has been pointed out in \cite{OS} for compact groups.
But our new results also clarify importance of actions of non-compact groups.)
In particular this indicates that a possibly existing \Cs-algebra version of the Tomita--Takesaki theory suggested in \cite{KK} would play significant roles in this theory.

Our analysis is based on the method in the seminal works of Kishimoto and Kumjian \cite{KK}, \cite{KK2} on quasi-free flows on the Cuntz algebras $\mathcal{O}_n$ ($2 \leq n <\infty$) (see also \cite{Kat} for a partial generalization to $\mathcal{O}_\infty$).
However we note that the canonical isometric elements in (the multiplier algebra of) the Pimsner algebra
non-trivially act on the primitive ideal space of the coefficient algebra in the present situation (which are substantial requirements in our new constructions),
while they are certainly trivial in the Cuntz algebra case.
Therefore we need various modifications and improvements.
We believe that these results are of independent interest, even in the non-equivariant case.

\subsection*{Notations}
Let $A$ denote a \Cs-algebra.

\begin{itemize}
\item We let $\IN:=\{0, 1, 2, \ldots\}$ denote the set of all non-negative integers.
\item $A_+$ denotes the positive cone of $A$.
\item For a subset $X \subset A$, set
\[(X)_1:=\{x\in X: \|x\|\leq 1\},\quad X_+:= X \cap A_+.\]
Clearly these definitions do depend on the inclusion $X \subset A$,
but this will be always clear from the context.
\item For $x, y\in A$ and a positive number $\epsilon>0$,
denote by $x\approx_\epsilon y$ if $\|x-y\|<\epsilon$.
\item
For a right Hilbert \Cs-$A$-module ${\mathcal{E}}$,
denote by $\IB_A({\mathcal{E}})$, $\IK_A(\mathcal{E})$
the \Cs-algebra of all bounded adjointable operators
and the \Cs-algebra of all compact operators on
${\mathcal{E}}$ respectively.
We let $\cU_A({\mathcal{E}})$ denote the unitary group of the \Cs-algebra $\IB_A({\mathcal{E}})$.
\item We usually assume ideals of a \Cs-algebra to be closed and self-adjoint.
However (possibly) non-closed self-adjoint ideals also appear in our analysis.
We refer to such ideals as \emph{algebraic} ideals. 
\item For a subset $S\subset A$, denote by $\cI(S; A)$ the ideal of $A$ generated by $S$.
\item The symbols `$\rtimes$', `$\otimes$' stand for the \Cs-algebra reduced crossed product and the \Cs-algebra minimal tensor product, respectively.
\item Put $\IK:=\IK(\ell^2(\IN))$.
\item $\cM(A)$ denotes the multiplier algebra of $A$.
\item For $a\in \cM(A)_+$,
denote by ${\rm Her}(a; A)$ the hereditary \Cs-subalgebra of $A$ generated by $a$.
That is, ${\rm Her}(a; A)=\overline{aAa}$.

\item A \Cs-algebra inclusion $A \subset B$, or more generally, a completely positive map $\varphi \colon A \rightarrow B$
between \Cs-algebras, is said to be \emph{non-degenerate} if $\varphi$ sends an approximate unit of $A$ to that of $B$.
By Corollary 5.7 of \cite{Lan}, such a map $\varphi$ extends to a strictly continuous completely positive map $\cM(A)\rightarrow \cM(B)$.
We denote the extension by the same symbol $\varphi$.
\end{itemize}

\section{Preliminaries}
Here we collect some basic facts used in our analysis.

\subsection{KMS weights and tracial weights}
For basic facts on KMS weights on \Cs-algebras, we refer the reader to \cite{Thom}.
A \emph{weight} on a \Cs-algebra $A$ is a map
\[\varphi \colon A_+ \rightarrow [0, \infty]\]
satisfying the following conditions
\begin{itemize}
\item $\varphi(0)=0$,
\item $\varphi(a+b)=\varphi(a)+\varphi(b)$ for $a, b\in A_+$,
\item $\varphi(\lambda a)=\lambda\varphi(a)$ for $\lambda \in [0, \infty), a\in A_+$.
\end{itemize}
A weight is said to be \emph{proper} if it is nonzero, lower semi-continuous, and densely defined (that is, $\varphi^{-1}([0, \infty))$ is dense in $A_+$).
Throughout this article, we only consider proper weights.
Therefore we omit the term `proper' unless the omission may cause a confusion.

For a weight $\varphi$ on a \Cs-algebra $A$, put
\[\cM_\varphi:=\spa \varphi^{-1}([0, \infty)).\]
Note that $\cM_\varphi$ is a dense $\ast$-subalgebra of $A$ (\cite{Thom}, Lemma 1.0.1 (e)).
Moreover, since $\varphi^{-1}([0, \infty)) \subset A_+$ is a hereditary subcone, it coincides with $(\cM_\varphi)_+$.
It is not hard to show that
$\varphi|_{(\cM_\varphi)_+}$ uniquely extends to a linear functional on $\cM_\varphi$ (\cite{Thom}, Lemma 1.2.4).
We denote the resulting linear functional $\cM_\varphi \rightarrow \IC$ by the same symbol $\varphi$.

We next recall the KMS condition.
Consider a flow $\alpha\colon \IR \acts A$ on a \Cs-algebra $A$.
An element $a\in A$ is said to be \emph{$\alpha$-entire} if
the orbit map $t\mapsto \alpha_t(a)$ extends to
an $A$-valued entire function
\[\IC \ni z \mapsto \alpha_z(a).\]
For each $n\in \IN \setminus \{0\}$, define a positive contractive map $R_n\colon A \rightarrow A$
to be
\[R_n(a):=\sqrt{\frac{n}{\pi}}\int_\IR e^{-ns^2}\alpha_s(a)ds, \quad a\in A.\]
Observe that for any $a\in A$, $R_n(a)$ is $\alpha$-entire.
Indeed, the function
\[\alpha_z(R_n(a)):=\sqrt{\frac{n}{\pi}}\int_\IR e^{-n(s-z)^2}\alpha_s(a)ds, \quad z\in \IC\]
gives an analytic extension of the orbit map
$t \mapsto \alpha_t(R_n(a))$.
This formula also shows the boundedness of the composite
$\alpha_z \circ R_n$ for all $z\in \IC$.
We put $R_0:=0$ for notational convenience.

By the Identity Theorem of analytic functions, for any $\alpha$-entire elements $a, b\in A$, $n\in \IN$, and $z, w\in \IC$, one has
\[\alpha_z(ab)=\alpha_z(a)\alpha_z(b),\quad \alpha_z(\alpha_w(a))=\alpha_{z+w}(a),\quad \alpha_z(R_n(a))=R_n(\alpha_z(a)), \quad \alpha_z(a^\ast)= \alpha_{\bar{z}}(a)^\ast \]
as the equations are valid when $z\in \IR$ and all terms are analytic on $z\in \IC$.

We set 
\[\mathcal{A}_\alpha:=\{a\in A: a{\rm~is~}\alpha\mathchar`-{\rm entire}\}.\]
Note that $(R_n(a))_{n\in \IN}\subset \mathcal{A}_\alpha$ converges to $a$ for all $a\in A$,
hence $\mathcal{A}_\alpha$ is dense in $A$.
It is clear from formulas in the previous paragraph that $\mathcal{A}_\alpha$ is a $\ast$-subalgebra of $A$ and satisfies $\alpha_z(\mathcal{A}_\alpha)=\mathcal{A}_\alpha$ for all $z\in \IC$.

For an $\alpha$-invariant weight $\varphi$ on $A$, we set
\[\cM_\varphi^\alpha:=\bigcap_{z\in \IC} \alpha_z(\mathcal{A}_\alpha \cap \cM_\varphi).\]
Note that $\cM_{\varphi}^\alpha$ is a $\ast$-subalgebra of $A$.
Moreover it is dense in $A$, since one has $R_n(\cM_{\varphi})\subset\cM_\varphi^\alpha$ for all $n\in \IN$.

For $\beta\in \IR$,
an $\alpha$-invariant weight $\varphi$ on $A$ is said to be an \emph{$\alpha$-$\beta$-KMS weight} (or \emph{satisfy the $\alpha$-$\beta$-KMS condition}) if it satisfies
one of the following two equivalent conditions:
\begin{enumerate}
\item For any $a\in \cM_\varphi^\alpha$, one has
\[\varphi(a^\ast a)=\varphi(\alpha_{-i\beta/2}(a) \alpha_{-i\beta/2}(a)^\ast).\]
\item For any $a, b\in \cM_\varphi^\alpha$, one has
\[\varphi(ab)=\varphi(b\alpha_{i\beta}(a)).\]
\end{enumerate}
We refer the reader to Theorem 3.2.1 of \cite{Thom}
for a proof of the equivalence and a few more equivalent formulations.

A weight $\tau$ on $A$ is said to be \emph{tracial} if it satisfies the tracial condition
\[\tau(a^\ast a)=\tau(a a^\ast)\quad{\rm~for~all~}a\in A.\]
Observe that $\alpha$-$0$-KMS weights are nothing but $\alpha$-invariant tracial weights.
For any tracial weight $\tau$, by the tracial condition,
the dense $\ast$-subalgebra $\cM_\tau\subset A$ in fact forms an algebraic ideal of $A$.
Consequently $\cM_\tau$ contains the \emph{Pedersen ideal} ${\rm Ped}(A)$ of $A$.
(For basic facts on Pedersen ideals, we refer the reader to Section 5.6 of \cite{Pedbook}.)

\begin{Rem}
An approximate unit $(e_n)_{n\in \IN}$ satisfying the condition $e_n e_{n+1} = e_n$ for each $n\in \IN$
makes various arguments on weights much easier.
For any separable \Cs-algebra $A$, one can find such an approximate unit in $\Ped(A)$.
Indeed, take a strictly positive element $a\in (A_+)_1$
and take an increasing sequence $(f_n) \subset C_c((0, 1])_+$
satisfying $\lim_{n \rightarrow \infty} f_n(x)=1$ for all $x\in (0, 1]$ and $f_n f_{n+1}=f_n$ for all $n\in \IN$.
Then the elements $e_n:=f_n(a)$; $n\in \IN$ form the desired approximate unit.
\end{Rem}

Let $A_0$ be a $\ast$-subalgebra of a \Cs-algebra $A$.
We say that a linear functional $\varphi \colon A_0 \rightarrow \IC$ is \emph{positive}
if it is self-adjoint and satisfies $\varphi((A_0)_+)\subset [0, \infty)$.
We remark that this definition does depend on the ambient \Cs-algebra $A$.

The following observation is useful to classify tracial weights on a \Cs-algebra.
\begin{Prop}\label{Prop:trace}
Let $A_0 \subset A$ be a dense $\ast$-subalgebra of a \Cs-algebra $A$
which contains an approximate unit $(e_n)_{n\in \IN} \subset A_0$ of $A$
satisfying $e_n e_{n+1}=e_n$ for each $n\in \IN$.
Define $A_1:=\bigcup_{n\in \IN} e_n A_0 e_n$, which is a dense $\ast$-subalgebra of $A$ containing $(e_n)_{n\in \IN}$.
Then any tracial positive linear functional
$\tau_0 \colon A_1 \rightarrow \IC$
uniquely extends to a tracial weight $\tau$ on $A$.
\end{Prop}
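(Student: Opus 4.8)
The plan is to build $\tau$ from a compatible family of bounded traces on the hereditary subalgebras cut out by the $e_n$, and then pass to the limit. Write $B_n:=\overline{e_nAe_n}$. Since each $e_n$ is a positive contraction and $e_ne_{n+1}=e_n$, self-adjointness gives $e_{n+1}e_n=e_ne_{n+1}=e_n$, so $e_{n+1}$ acts as a unit on $e_nA_0e_n$ and one checks $B_n\subseteq B_{n+1}$. First I would show $\tau_0$ is bounded on $e_nA_0e_n$: for $x\in(e_nA_0e_n)_+$ one has $x=e_{n+1}xe_{n+1}\le\|x\|e_{n+1}^2\le\|x\|e_{n+1}$, whence $0\le\tau_0(x)\le\|x\|\tau_0(e_{n+1})$, and the Cauchy--Schwarz inequality for the positive functional $\tau_0$ upgrades this to a norm bound on all of $e_nA_0e_n$. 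As $e_nA_0e_n$ is dense in $B_n$, $\tau_0$ extends to a bounded positive functional $\tau_n$ on $B_n$, which is tracial by continuity of $x\mapsto\tau_n(x^*x)-\tau_n(xx^*)$. The relations among the $e_n$ give $\tau_{n+1}|_{B_n}=\tau_n$, so the $\tau_n$ form a compatible increasing family.

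The next step is the definition $\tau(a):=\sup_n\tau_n(e_nae_n)$ for $a\in A_+$. The supremum is in fact a monotone limit: for $a\in A_+$, putting $b:=e_{n+1}ae_{n+1}$ one has $e_nae_n=e_nbe_n$, and by traciality of $\tau_{n+1}$ together with $e_n^2\le1$,
\[\tau_n(e_nae_n)=\tau_{n+1}(e_nbe_n)=\tau_{n+1}(b^{1/2}e_n^2b^{1/2})\le\tau_{n+1}(b)=\tau_{n+1}(e_{n+1}ae_{n+1}).\]
Additivity and homogeneity of $\tau$ are immediate, and $\tau$ is lower semicontinuous as a supremum of the norm-continuous maps $a\mapsto\tau_n(e_nae_n)$. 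Since $e_n$ fixes $B_m$ for $n>m$, one gets $\tau(a)=\tau_m(a)$ for $a\in(B_m)_+$; hence $\tau$ is finite on the dense subalgebra $B_\infty:=\bigcup_nB_n$, is densely defined, and restricts to $\tau_0$ on $A_1$. Provided $\tau_0\neq0$ this makes $\tau$ a proper weight.

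The crux is traciality, $\tau(a^*a)=\tau(aa^*)$ for all $a\in A$, and here I expect the \emph{main difficulty}: the elements $ae_n$ and $e_na^*$ never lie in a single hereditary subalgebra, so one cannot move factors around naively. I would instead exploit lower semicontinuity together with the monotonicity above. Set $\tilde a_n:=e_nae_n\in B_n$. On one hand $\tilde a_n\to a$ in norm, so $\tilde a_n^*\tilde a_n\to a^*a$ and lower semicontinuity gives $\tau(a^*a)\le\liminf_n\tau(\tilde a_n^*\tilde a_n)$. On the other hand $\tilde a_n^*\tilde a_n=e_na^*e_n^2ae_n\le e_na^*ae_n$, so monotonicity of $\tau$ yields $\tau(\tilde a_n^*\tilde a_n)\le\tau(e_na^*ae_n)=\tau_n(e_na^*ae_n)\le\tau(a^*a)$. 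Combining, $\tau(\tilde a_n^*\tilde a_n)\to\tau(a^*a)$, and symmetrically $\tau(\tilde a_n\tilde a_n^*)\to\tau(aa^*)$. Since $\tilde a_n\in B_n$ and $\tau_n$ is tracial, $\tau(\tilde a_n^*\tilde a_n)=\tau(\tilde a_n\tilde a_n^*)$ for every $n$, and passing to the limit gives $\tau(a^*a)=\tau(aa^*)$.

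Finally, for uniqueness I would take any tracial weight $\sigma$ with $\sigma|_{A_1}=\tau_0$. Because $x\le\|x\|e_{n+1}$ for $x\in(B_n)_+$ and $\sigma(e_{n+1})=\tau_0(e_{n+1})<\infty$, the weight $\sigma$ is finite and bounded on $B_n$, hence norm-continuous there, and so agrees with $\tau_n$ on the dense subalgebra $e_nA_0e_n$; thus $\sigma|_{B_n}=\tau_n$. Traciality of $\sigma$ gives $\sigma(e_nae_n)=\sigma(a^{1/2}e_n^2a^{1/2})\le\sigma(a)$, while lower semicontinuity gives $\sigma(a)\le\liminf_n\sigma(e_nae_n)$; hence $\sigma(a)=\sup_n\sigma(e_nae_n)=\sup_n\tau_n(e_nae_n)=\tau(a)$ for all $a\in A_+$, so $\sigma=\tau$.
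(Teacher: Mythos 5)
Your proposal is correct and follows essentially the same route as the paper's proof: extend $\tau_0$ to bounded tracial positive functionals on the hereditary subalgebras ${\rm Her}(e_n;A)$, define $\tau(a)$ as the monotone limit of the compressions $\tau_n(e_nae_n)$, and verify traciality by swapping inside these subalgebras and passing to the limit. The only cosmetic differences are that your traciality step goes through lower semicontinuity plus a sandwich argument where the paper uses a double-limit computation with $e_m$ inserted, and that you write out the uniqueness argument which the paper dismisses as clear from the existence proof.
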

\begin{proof}
We first note that for any positive linear functional $\varphi\colon A_1 \rightarrow \IC$ and any two positive elements $e, f\in (A_1)_+$
satisfying $ef=f$, $\varphi$ is bounded on $eA_1 e$ (in the norm of $A$).
Indeed for any self-adjoint element $a\in e A_1 e$, since $a=faf$, we have
\[ - \|a\| f^2 \leq a \leq\|a\| f^2,\]
and hence $\|\varphi|_{e A_1 e}\| \leq 2\varphi(f^2)$.
Thus the restriction $\varphi|_{e A_1 e}$ extends to a bounded positive linear functional on $\overline{e A_1 e}={\rm Her}(e; A)$.

By the above observation, $\tau_0$
extends to a tracial positive linear functional $\tau_1$ on the increasing union $\bigcup_{n\in \IN}{\rm Her}(e_n; A)\subset A$.
(This is well-defined, thanks to the relations $e_n e_{n+1} =e_n$; $n\in \IN$.)
By the tracial condition of $\tau_0$, for any $a\in A_+$,
the sequence $(\tau_1(e_n a e_n))_{n\in \IN}$ is increasing.
To see this, we first observe that for $a_1\in (A_1)_+$ and $n\in \IN$,
\[\tau_0(e_{n} a^2_1 e_{n}) =\tau_0(a_1 e_n^2 a_1)\leq \tau_0(a_1 e_{n+1}^2 a _1) =\tau_0(e_{n+1}a^2_1 e_{n+1}).\]
As $\tau_1$ is bounded on $\mathrm{Her}(e_{n+1}; A)$, we conclude
\[\tau_1(e_n a e_n) \leq \tau_1(e_{n+1} a e_{n+1})\quad{\rm~for~all~}a\in A_+.\]

We now define
\[\tau\colon A_+ \rightarrow [0, \infty]\]
to be
\[\tau(a):= \lim_{n \rightarrow \infty} \tau_1(e_n a e_n)\quad {\rm ~for~}a\in A_+.\]
This gives the desired tracial weight.
Note that as $e_ne_{n+1}=e_n$, $\tau$ is an extension of $\tau_1$ and thus gives an extension of $\tau_0$.
Then it is clear from the definition that $\tau$ is indeed a proper weight on $A$.
The tracial condition of $\tau$ follows from the following calculations:
\begin{align*}
\tau(a^\ast a) &= \lim_{n \rightarrow \infty} \tau_1(e_n a^\ast a e_n)\\
&=\lim_{n \rightarrow \infty} \lim_{m \rightarrow \infty} \tau_1(e_n a^\ast e_m^2 a e_n)\\
&=\lim_{n \rightarrow \infty} \lim_{m \rightarrow \infty} \tau_1(e_m a e_n^2 a^\ast e_m)\\
&\leq \tau(a a^\ast)\quad\quad {\rm~for~all~}a\in A.
\end{align*}

The uniqueness of the extension $\tau$ is clear from the proof of the existence.
\end{proof}

The following connection between KMS weights and tracial weights is crucial for the main purpose of the present article.
\begin{Thm}[\cite{Thom}, Theorem 8.2.13; cf.~ \cite{KK}, Theorem 3.2 and \cite{Tak}, Lemma 8.2]\label{Thm:KK}
Let $\alpha \colon \IR \acts A$ be a flow on a \Cs-algebra $A$.
Let $\beta\in \IR$.
Then there is a bijective correspondence between $\alpha$-$\beta$-KMS weights on $A$
and tracial weights $\tau$ on $A \rtimes_\alpha \IR$ satisfying $\tau \circ \hat{\alpha}_t=e^{-\beta t} \cdot \tau$ for all $t\in \IR$.
Here $\hat{\alpha}\colon \IR \acts A \rtimes_\alpha \IR$ denotes the dual flow of $\alpha$.
\end{Thm}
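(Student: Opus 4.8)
The plan is to realise the bijection through dual weights, exhibiting explicit maps in both directions and showing they are mutually inverse. The natural bridge is the integrated picture of the (reduced $=$ full, since $\IR$ is amenable) crossed product: its canonical dense $\ast$-subalgebra is the image of $C_c(\IR, \cM_\varphi^\alpha)$ under $f\mapsto\int f(t)\lambda_t\,dt$, with convolution product $(f\ast g)(t)=\int f(s)\alpha_s(g(t-s))\,ds$ and involution $f^\ast(t)=\alpha_t(f(-t)^\ast)$, and on this algebra the KMS datum of $\varphi$ and the dual flow $\hat\alpha$ (acting by $\hat\alpha_t(f)(s)=e^{ist}f(s)$) interact in exactly the way needed to produce a scaled trace.

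First I would treat the forward direction. Given an $\alpha$-$\beta$-KMS weight $\varphi$, I would form its dual weight $\tilde\varphi$ on $A\rtimes_\alpha\IR$ and then correct it by the Radon--Nikodym factor attached to the analytic generator $D$ of $\hat\alpha$ (the Pedersen--Takesaki procedure), setting, roughly, $\tau:=\tilde\varphi(e^{\beta D/2}\,\cdot\,e^{\beta D/2})$. The essential algebraic point is that the asymmetry of the convolution product is absorbed precisely by the KMS identity $\varphi(ab)=\varphi(b\alpha_{i\beta}(a))$ once the exponential weight $e^{\beta s}$ is inserted; this is where the entire elements $\mathcal{A}_\alpha$ and the core $\cM_\varphi^\alpha$ enter, to legitimise the analytic continuation to the imaginary point $z=i\beta$. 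The scaling $\tau\circ\hat\alpha_t=e^{-\beta t}\tau$ should then fall out of the action of $\hat\alpha_t$ on the convolution variable, and lower semicontinuity together with density would let me promote $\tau$ from the core to a genuine proper tracial weight --- for instance through Proposition~\ref{Prop:trace}, using an approximate unit drawn from $\Ped(A\rtimes_\alpha\IR)$.

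For the reverse direction, given a tracial weight $\tau$ on $A\rtimes_\alpha\IR$ with $\tau\circ\hat\alpha_t=e^{-\beta t}\tau$, I would recover $\varphi$ by a slice construction. Since $A$ is not literally a subalgebra of the crossed product, I would use the canonical non-degenerate embedding $A\hookrightarrow\cM(A\rtimes_\alpha\IR)$ together with an approximate unit $(\chi_n)\subset C_c(\IR)$ and define $\varphi(a):=\lim_n\tau(\chi_n^\ast\,a\,\chi_n)$ (suitably normalised); equivalently, I would transport $\tau$ through Takai duality to a trace on $A\otimes\IK(L^2(\IR))$ and read off its slice along a rank-one projection. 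The scaling hypothesis on $\tau$ is exactly what forces the resulting weight on $A$ to be $\alpha$-invariant and to satisfy the $\beta$-KMS condition (rather than merely to be a trace); checking the KMS identity reverses the computation of the forward direction.

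The hard part will be analytic rather than algebraic: controlling domains and cores throughout (dense definedness, lower semicontinuity, and the interchange of the defining integrals with $\varphi$), justifying the analytic continuation to $i\beta$ using only entire elements, and handling the unbounded multiplier $e^{\beta D/2}$ rigorously. The subtlest point is that the two assignments are mutually inverse; here I would pin down both tracial weights and KMS weights by their values on an explicit core --- again via Proposition~\ref{Prop:trace} and the uniqueness of extensions --- so that agreement on the core forces global equality. As all of this is carried out in full in \cite{Thom} (Theorem~8.2.13), with the $\mathcal{O}_n$-state prototypes in \cite{KK} and \cite{Tak}, I would in the end simply invoke their result.
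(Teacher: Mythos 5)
The paper gives no proof of this statement at all: it is imported verbatim as a known result, with the attribution ``\cite{Thom}, Theorem 8.2.13'' doing all the work, and your proposal ultimately does the same thing by invoking that citation at the end. Your sketch of the intermediate machinery (dual weights, the Radon--Nikodym correction by the generator of $\hat{\alpha}$, recovering $\varphi$ by slicing/Takai duality, and pinning down mutual inverses on cores) is a faithful outline of how the proof in \cite{Thom} actually runs, so your route and the paper's coincide.
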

\begin{Rem}\label{Rem:spl}
By Theorem \ref{Thm:KK}, when there is a faithful $\alpha$-$\beta$-KMS weight for some nonzero $\beta$,
the reduced crossed product $A\rtimes_\alpha \IR$ is stably projectionless,
as observed by Connes \cite{Con}.
Here we recall that a \Cs-algebra $A$ is said to be stably projectionless,
if the stabilization $A \otimes \IK$ does not contain a nonzero projection.
\end{Rem}

\subsection{Compact group actions and fixed point algebras}\label{subsection:compact}
Let $K$ be a compact group.
Then for any (point-norm continuous) action $\alpha \colon K \acts A$ on a \Cs-algebra $A$,
we have the canonical conditional expectation
\[E^\alpha \colon A \rightarrow A^\alpha\]
given by
\[E^\alpha(a):=\int_K \alpha_g(a) dm_K(g)\quad {\rm~for~}a\in A.\]
Here $A^\alpha$ denotes the fixed point algebra of $\alpha$ (we use the same notation for non-compact group actions as well),
and $m_K$ denotes the Haar probability measure on $K$.
For basic facts and some applications of this conditional expectation, see Section 4.5 of \cite{BO} for instance.

\subsection{Almost periodic flows and their KMS weights}
A flow $\alpha \colon \IR \curvearrowright A$ on a \Cs-algebra $A$ is said to be
\emph{almost periodic} if the image $\alpha(\IR) \subset {\rm Aut}(A)$ is relatively compact in the point-norm topology.
We denote by $K_\alpha$ the closure of $\alpha(\IR)$ in ${\rm Aut}(A)$.
For an almost periodic flow $\alpha \colon \IR \curvearrowright A$,
we denote by
\[E^\alpha \colon A \rightarrow A^\alpha\]
the canonical conditional expectation (see Section \ref{subsection:compact})
of the compact group action $K_\alpha \acts A$, which has the same fixed point algebra as $\alpha$.

For almost periodic flows, we have a simple description of KMS weights.
\begin{Prop}\label{Prop:KMS}
Let $\alpha \colon \IR \curvearrowright A$ be an almost periodic flow on a separable \Cs-algebra $A$.
Let $\varphi \colon A_+ \rightarrow [0, \infty]$ be an $\alpha$-KMS weight on $A$.
Then there is a tracial weight $\tau$ on $A^\alpha$ satisfying
\[\varphi=\tau \circ E^\alpha.\]
\end{Prop}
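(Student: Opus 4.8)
The plan is to reduce the KMS condition for an almost periodic flow to a tracial condition on the fixed point algebra, using the conditional expectation $E^\alpha$ as the bridge. Since $\alpha$ is almost periodic, the orbit map of any element decomposes into spectral components indexed by the (countable, because $A$ is separable) set of generalized eigenvalues of $\alpha$; concretely, $A$ is spanned by its spectral subspaces $A_\lambda=\{a\in A: \alpha_t(a)=e^{i\lambda t}a\}$, and $E^\alpha$ is exactly the projection onto $A_0=A^\alpha$. First I would define the candidate tracial weight by setting $\tau:=\varphi|_{(A^\alpha)_+}$ and check that it is a (proper) tracial weight on $A^\alpha$. That it is a weight is immediate; the tracial condition $\tau(a^*a)=\tau(aa^*)$ for $a\in A^\alpha$ follows from the $\alpha$-$\beta$-KMS identity together with the fact that $\alpha_{i\beta}$ fixes $A^\alpha$ pointwise (since elements of $A^\alpha$ are $\alpha$-invariant, hence $\alpha$-entire with $\alpha_z(a)=a$ for all $z$). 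Lower semicontinuity and dense definedness of $\tau$ should be inherited from $\varphi$ once one checks that $E^\alpha$ maps a dense subcone of $(A^\alpha)_+$ into the domain of $\varphi$.

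The heart of the argument is the identity $\varphi=\varphi\circ E^\alpha$, from which $\varphi=\tau\circ E^\alpha$ follows. The key mechanism is that $\varphi$ must annihilate every nontrivial spectral subspace in an appropriate sense: for $a$ lying in a spectral subspace with nonzero eigenvalue, the KMS condition forces the corresponding contribution to vanish. I would make this precise by testing $\varphi$ against spectral components. Given $a$ in the dense $\ast$-subalgebra $\cM_\varphi^\alpha$, one writes $a=\sum_\lambda a_\lambda$ (a convergent spectral decomposition, with $a_\lambda$ obtained by averaging $e^{-i\lambda t}\alpha_t(a)$ against the Haar measure of the compact group $K_\alpha$), and $E^\alpha(a)=a_0$. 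For a self-adjoint positive element $x=a^*a$, the off-diagonal spectral terms are precisely the pieces killed by $E^\alpha$, and the task is to show $\varphi(x)=\varphi(E^\alpha(x))$. Here I would invoke the KMS relation in the form $\varphi(bc)=\varphi(c\,\alpha_{i\beta}(b))$: applying this to a spectral component $c=c_\mu$ produces a factor $e^{-\beta\mu}$, and comparing $\varphi(c_\mu^* c_\mu)$ computed two ways pins down which components can carry mass. Equivalently, one shows $\varphi$ is invariant under the compact group $K_\alpha$, i.e. $\varphi\circ\alpha_g=\varphi$ for all $g\in K_\alpha$, by a continuity/density argument starting from $\alpha$-invariance of $\varphi$ on $\IR$ and extending to the closure $K_\alpha$; then $\varphi=\varphi\circ E^\alpha$ is just the averaging $E^\alpha(x)=\int_{K_\alpha}\alpha_g(x)\,dm_{K_\alpha}(g)$ combined with lower semicontinuity of $\varphi$ (to pass $\varphi$ through the Haar integral via a limit of Riemann sums, using Fatou-type lower semicontinuity and $K_\alpha$-invariance to get both inequalities).

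The main obstacle I expect is the analytic bookkeeping around passing $\varphi$ through the Haar integral over the compact group $K_\alpha$: $\varphi$ is only lower semicontinuous and unbounded, so one cannot naively interchange $\varphi$ with $\int_{K_\alpha}$. The clean route is to prove $K_\alpha$-invariance of $\varphi$ first (which upgrades lower semicontinuity into an honest invariance statement), and then use that for a $K_\alpha$-invariant lower semicontinuous weight the relation $\varphi=\varphi\circ E^\alpha$ holds on the positive cone. Establishing $\varphi\circ\alpha_g=\varphi$ for all $g\in K_\alpha$ from $\alpha$-invariance on $\IR$ requires care, since invariance under a dense subgroup plus mere lower semicontinuity need not give invariance under the closure; I would control this by working on the $\alpha$-entire, $K_\alpha$-smooth elements in $\cM_\varphi^\alpha$, where the orbit map into $\Aut(A)$ is norm-continuous, so that $g\mapsto\varphi(\alpha_g(x))$ is lower semicontinuous and constant on a dense subgroup, hence constant. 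Once invariance is in hand, identifying $\tau$ as a genuine tracial weight via the already-established KMS-to-trace correspondence (Theorem \ref{Thm:KK} specialized to $A^\alpha$, where $\alpha$ restricts trivially) completes the argument.
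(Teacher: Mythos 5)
Your skeleton matches the paper's: restrict $\varphi$ to $A^\alpha_+$ to define $\tau$, get traciality from the KMS condition (elements of $A^\alpha$ are fixed by every $\alpha_z$), and reduce everything to the identity $\varphi=\varphi\circ E^\alpha$ via $K_\alpha$-invariance. But there is a genuine gap at the decisive step. Lower semicontinuity gives only \emph{one} of the two inequalities: writing $E^\alpha(x)$ as a norm limit of convex combinations $\sum_i c_i\,\alpha_{g_i}(x)$ and applying Fatou plus invariance yields $\varphi(E^\alpha(x))\leq\varphi(x)$, and nothing more. The reverse inequality $\varphi(x)\leq\varphi(E^\alpha(x))$ cannot be obtained this way --- weights are not upper semicontinuous, so Fatou points in the wrong direction --- and your phrase ``Fatou-type lower semicontinuity and $K_\alpha$-invariance to get both inequalities'' is an assertion, not a proof. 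This reverse inequality is precisely where the paper does its real work: using separability it chooses an approximate unit $(e_n)_{n\in\IN}\subset\Ped(A^\alpha)$ with $e_ne_{n+1}=e_n$, which is also an approximate unit of $A$ by non-degeneracy of $A^\alpha\subset A$; it observes that $\varphi$ is \emph{bounded} on each hereditary subalgebra $\mathrm{Her}(e_n;A)$ (because $\pm a\leq\|a\|e_{n+1}^2$ there and $\varphi(e_{n+1}^2)<\infty$), so that on these subalgebras --- and only there --- $\varphi$ can legitimately be interchanged with the Haar integral, giving $\varphi=\tau\circ E^\alpha$ locally; and it then passes to general $a\in A_+$ via
\[
\varphi(a)\leq\liminf_n\varphi(e_nae_n)=\liminf_n\tau(E^\alpha(e_nae_n))=\liminf_n\tau\bigl(E^\alpha(a)^{1/2}e_n^2E^\alpha(a)^{1/2}\bigr)\leq\tau(E^\alpha(a)),
\]
where the middle equalities use the $A^\alpha$-bimodularity of $E^\alpha$ and, crucially, the \emph{traciality} of $\tau$. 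So the hard direction is not a soft consequence of invariance; it needs the KMS/tracial structure together with the Pedersen-ideal approximate unit, and your proposal never supplies a substitute. (Your alternative spectral-subspace route has the same problem: the decomposition $a=\sum_\lambda a_\lambda$ only converges in a Ces\`aro sense, and pushing an unbounded lower semicontinuous $\varphi$ through it is exactly the interchange you are not entitled to.)

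A secondary point: your concern about upgrading $\IR$-invariance to $K_\alpha$-invariance is misplaced, and the fix you propose does not work as stated. The upgrade is in fact easy: for $g\in K_\alpha$ choose $t_n$ with $\alpha_{t_n}\to\alpha_g$ point-norm; lower semicontinuity and $\IR$-invariance give $\varphi(\alpha_g(x))\leq\varphi(x)$ for \emph{every} $x\in A_+$ and $g\in K_\alpha$, and applying this estimate to the pair $(\alpha_g(x),\,g^{-1})$ gives the reverse inequality --- the group structure does the work. By contrast, the principle you invoke, ``lower semicontinuous and constant on a dense subgroup, hence constant,'' is false as a general statement: a lower semicontinuous function may drop strictly below its value on a dense set at points of the complement. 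This slip is repairable, but the missing reverse inequality in the previous paragraph is not; as written, the proposal does not prove the proposition.
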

\begin{proof}
Since $\varphi$ is $\alpha$-invariant, by the lower semi-continuity of $\varphi$,
it is $K_\alpha$-invariant.
Then, again by the lower semi-continuity of $\varphi$,
we have
\[\varphi \circ E^\alpha \leq \varphi.\]
This in particular shows the properness of
$\tau:=\varphi |_{A^\alpha_+}$.
By the $\alpha$-KMS condition of $\varphi$,
the weight $\tau$ is tracial.

Choose an approximate unit $(e_n)_{n\in \IN} \subset \Ped(A^\alpha)$ satisfying $e_n e_{n+1}=e_n$ for each $n\in \IN$.
As the inclusion $A^\alpha \subset A$ is non-degenerate,
$(e_n)_{n\in \IN}$ also forms an approximate unit in $A$.
By the positivity of $\varphi$, $\varphi|_{{\rm Her}(e_n; A)}$ is bounded for all $n\in \IN$.
Hence $\varphi(a)=\tau(E^\alpha(a))$ for $a\in \bigcup_{n\in \IN} {\rm Her}(e_n; A)_+$.
For general $a\in A_+$, by the lower semi-continuity of $\varphi$, one has

\begin{align*}
\varphi(a)&\leq \liminf_{n\rightarrow \infty} \varphi(e_n a e_n)\\
 &= \liminf_{n\rightarrow \infty} \tau(E^\alpha(e_n a e_n))\\
&= \liminf_{n \rightarrow \infty} \tau( e_n E^\alpha(a) e_n)\\
&=\liminf_{n \rightarrow \infty} \tau(E^\alpha(a)^{1/2} e_n^2 E^\alpha(a)^{1/2})\\
&= \tau(E^\alpha(a))\quad\quad {\rm~for~all~}a\in A_+.
\end{align*}
As the converse inequality has already been shown,
we conclude $\varphi = \tau \circ E^\alpha$.
\end{proof}

By combining Propositions \ref{Prop:trace} and \ref{Prop:KMS},
we obtain the following simple description of KMS weights for almost periodic flows.
This will be used in our classification results of KMS weights (Theorems \ref{Thm:KMSc} and \ref{Thm:KMSc2}).
Before stating and proving this description, let us introduce one more definition.

Let $\alpha \colon \IR \acts A$ be a flow on a \Cs-algebra $A$.
Let $A_0 \subset \mathcal{A}_\alpha (\subset A)$ be an $\alpha$-invariant $\ast$-subalgebra which also satisfies $\alpha_{-i\beta/2}(A_0)= A_0$.
We say an $\alpha$-invariant linear functional $\varphi \colon A_0 \rightarrow \IC$ satisfies the \emph{$\alpha$-$\beta$-KMS condition on $A_0$}
if it is also $\alpha_{-i\beta/2}$-invariant and satisfies one of the following two equivalent conditions.
\begin{enumerate}
\item For any $a\in A_0$,
$\varphi(a^\ast a)=\varphi(\alpha_{-i\beta/2}(a) \alpha_{-i\beta/2}(a)^\ast)$.
\item For any $a, b\in A_0$, $\varphi(ab)=\varphi(b\alpha_{i\beta}(a))$.
\end{enumerate}
The implication (1) $\Rightarrow$ (2) follows by the same polarization argument as in the proof of (2) $\Rightarrow$ (3) in Theorem 3.2.1 in \cite{Thom},
while the converse implication is clear from the $\alpha_{-i\beta/2}$-invariance of $\varphi$.

\begin{Thm}\label{Thm:KMS}
Let $\alpha \colon \IR \curvearrowright A$ be an almost periodic flow on a \Cs-algebra $A$.
Let $\beta\in \IR$.
Let $A_0 \subset \mathcal{A}_\alpha$ $(\subset A)$ be an $\alpha$-invariant dense $\ast$-subalgebra satisfying the following conditions.
\begin{itemize}\item $\alpha_{-i\beta/2}(A_0)=A_0$.
\item $E^\alpha(A_0)= A_0\cap \Ped(A^\alpha)$.
\item There is a sequence $(e_n)_{n\in \IN} \subset {\rm Ped}(A^\alpha)\cap A_0$
which forms an approximate unit of $A$ and satisfies $e_n e_{n+1}=e_n$ for each $n\in \IN$.
\item $R_n(A_0)\subset A_0$ for all $n\in \IN$.
\end{itemize}
We fix a sequence $(e_n)_{n\in \IN}$ as in the third condition,
and define $A_1:=\bigcup_{n\in \IN} e_n A_0 e_n$.
Then there is a bijective correspondence between $\alpha$-$\beta$-KMS weights $\varphi$ on $A$
and tracial positive linear functionals
$\tau_0$ on $A_1 \cap \Ped(A^\alpha)$
whose composite with $E^\alpha|_{A_1}$, $\tau_0 \circ E^\alpha|_{A_1}$, satisfies the $\alpha$-$\beta$-KMS condition on $A_0$.
The bijection is given by 
\[\varphi \mapsto \tau_0:=\varphi|_{A_1\cap \Ped(A^\alpha)}.\]
\end{Thm}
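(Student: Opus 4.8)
The plan is to realize the asserted correspondence as a composition of two known bijections: one between $\alpha$-$\beta$-KMS weights on $A$ and suitable tracial weights on the fixed point algebra $A^\alpha$, furnished by Proposition \ref{Prop:KMS}, and one between such tracial weights and their restrictions to the dense $\ast$-subalgebra $A_1\cap\Ped(A^\alpha)$, furnished by Proposition \ref{Prop:trace}. The four standing hypotheses on $A_0$ serve precisely to make these two identifications compatible and to let the analytic KMS condition be tested on $A_0$ alone. For the backward construction I would set $B:=A^\alpha$ and $B_0:=A_0\cap\Ped(A^\alpha)=E^\alpha(A_0)$, and observe that $B_0$ is a dense $\ast$-subalgebra of $A^\alpha$ containing the approximate unit $(e_n)_{n\in\IN}$, with a short computation using $e_ne_{n+1}=e_n$ showing $A_1\cap\Ped(A^\alpha)=\bigcup_{n}e_nB_0e_n$. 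Proposition \ref{Prop:trace}, applied inside $A^\alpha$, then extends each tracial positive linear functional $\tau_0$ on $A_1\cap\Ped(A^\alpha)$ uniquely to a tracial weight $\tau$ on $A^\alpha$, and I would set $\varphi:=\tau\circ E^\alpha$. That $\varphi$ is $\alpha$-invariant follows from $E^\alpha\circ\alpha_t=E^\alpha$ (as $\alpha_t\in K_\alpha$); lower semicontinuity and nonvanishing pass through the norm-continuous conditional expectation, and density of the domain will follow once $A_0\subset\cM_\varphi$ is observed below.

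For the forward direction, given an $\alpha$-$\beta$-KMS weight $\varphi$, Proposition \ref{Prop:KMS} produces a tracial weight $\tau$ on $A^\alpha$ with $\varphi=\tau\circ E^\alpha$; here I would note that the proof of Proposition \ref{Prop:KMS} uses separability only to produce an approximate unit of the stated form, which is now supplied by the third hypothesis, so no separability of $A$ is needed. I then put $\tau_0:=\varphi|_{A_1\cap\Ped(A^\alpha)}$, which equals $\tau|_{A_1\cap\Ped(A^\alpha)}$ because $E^\alpha$ fixes $A^\alpha$ pointwise. Since tracial weights are finite on the Pedersen ideal, $\tau_0$ is a genuine tracial positive linear functional. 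Moreover $E^\alpha(A_0)\subset\Ped(A^\alpha)\subset\cM_\tau$, so $\tau\circ E^\alpha$ is finite on $A_0$ and agrees there with $\varphi$; in particular $A_0\subset\cM_\varphi$. Consequently the algebraic $\alpha$-$\beta$-KMS condition on $A_0$ for the composite is inherited directly from the KMS identity satisfied by $\varphi$.

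The substance of the theorem is the converse half of the backward direction: showing that a weight $\varphi=\tau\circ E^\alpha$ whose composite satisfies the KMS condition \emph{merely} on $A_0$ is in fact a bona fide $\alpha$-$\beta$-KMS weight. I would first record $A_0\subset\cM_\varphi^\alpha$ (finiteness coming again from $E^\alpha(A_0)\subset\Ped(A^\alpha)$, together with $\alpha_{-i\beta/2}(A_0)=A_0$), and then upgrade the identity $\varphi(a^\ast a)=\varphi(\alpha_{-i\beta/2}(a)\alpha_{-i\beta/2}(a)^\ast)$ from $a\in A_0$ to all of $\cM_\varphi^\alpha$ by approximation. The tools are the Gaussian regularizers $R_n$, which preserve $A_0$ by the fourth hypothesis and satisfy the boundedness of $\alpha_z\circ R_n$, combined with $\alpha_{-i\beta/2}(A_0)=A_0$ and the density of $A_0$, so that an arbitrary $\alpha$-entire element of $\cM_\varphi^\alpha$ can be approximated while both sides of the identity remain under control. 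This upgrade is the main obstacle: because $\varphi$ is only lower semicontinuous, one cannot simply pass to the limit in an equality, and must instead estimate each side from the appropriate direction, exploiting that the smoothing $R_n$ intertwines with the analytic continuation $\alpha_{-i\beta/2}$.

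Finally, the two assignments are mutually inverse by the uniqueness clauses already in hand: the extension furnished by Proposition \ref{Prop:trace} is unique, while the decomposition $\varphi=\tau\circ E^\alpha$ extracted from Proposition \ref{Prop:KMS} is pinned down by $\tau=\varphi|_{A^\alpha}$, whose further restriction to $A_1\cap\Ped(A^\alpha)$ returns $\tau_0$. Hence $\varphi\mapsto\varphi|_{A_1\cap\Ped(A^\alpha)}$ is the desired bijection.
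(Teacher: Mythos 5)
Your overall architecture is exactly the paper's: the bijection is assembled by composing Proposition \ref{Prop:trace} (applied inside $A^\alpha$, with $B_0:=E^\alpha(A_0)$ and the identification $A_1\cap\Ped(A^\alpha)=\bigcup_n e_nB_0e_n$) with Proposition \ref{Prop:KMS}; your forward direction and mutual-inverse bookkeeping are correct, and your observation that the separability hypothesis in Proposition \ref{Prop:KMS} is only used to produce an approximate unit of the kind now supplied by the third hypothesis is accurate and worth making. However, there is a genuine gap at precisely the point you yourself call ``the main obstacle'': upgrading the KMS identity from $A_0$ (in effect, from $e_nA_0e_n$) to all of $\cM_\varphi^\alpha$. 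Saying one ``must estimate each side from the appropriate direction'' is a restatement of the difficulty, not a mechanism: lower semicontinuity gives inequalities in the \emph{same} direction on both sides of the desired equality, so no direct approximation of a general entire element by elements of $A_0$ can close it. As written, your plan cannot be executed.

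The paper's proof supplies two concrete ingredients that your proposal never identifies and which are exactly what makes the upgrade work. First, $\varphi=\tau\circ E^\alpha$ is \emph{bounded} on each hereditary subalgebra ${\rm Her}(e_n;A)$ (by the argument in the first paragraph of the proof of Proposition \ref{Prop:trace}), so on these subalgebras $\varphi$ is genuinely norm-continuous. Since $e_n$ is $\alpha$-fixed, $R_m$ and $\alpha_{-i\beta/2}\circ R_m$ are bounded maps preserving ${\rm Her}(e_n;A)$, and they map $e_nA_0e_n$ into itself; hence the identity $\varphi(R_m(a)^\ast R_m(a))=\varphi(\alpha_{-i\beta/2}(R_m(a))\,\alpha_{-i\beta/2}(R_m(a))^\ast)$, valid for $a\in e_nA_0e_n$ by hypothesis, extends by density and continuity to all $a\in{\rm Her}(e_n;A)$, and then, letting $m\to\infty$ inside ${\rm Her}(e_n;A)$, to all $\alpha$-entire elements of ${\rm Her}(e_n;A)$ --- no lower semicontinuity is invoked in this stage at all. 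Second, to pass from these compressed elements to a general $a\in\cM_\varphi^\alpha$, one applies the previous stage to $e_mae_n$ and then needs the monotonicity inequality $\varphi(e_mbe_m)\le\varphi(b)$ for $b\in(\cM_\varphi)_+$, which is \emph{not} a general property of weights but follows from the specific form of $\varphi$ via
\[\varphi(b-e_mbe_m)=\tau\bigl((1-e_m^2)^{1/2}E^\alpha(b)(1-e_m^2)^{1/2}\bigr)\ge 0.\]
It is this inequality, combined with lower semicontinuity, that produces the two-sided squeeze
\[\varphi(a^\ast a)=\lim_{n}\lim_{m}\varphi(e_na^\ast e_m^2ae_n)
=\lim_{n}\lim_{m}\varphi\bigl(e_m\alpha_{-i\beta/2}(a)e_n^2\alpha_{-i\beta/2}(a)^\ast e_m\bigr)
=\varphi\bigl(\alpha_{-i\beta/2}(a)\alpha_{-i\beta/2}(a)^\ast\bigr).\]
These two steps constitute the substance of the theorem; your proposal defers them rather than proving them.
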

\begin{proof}
By Proposition \ref{Prop:trace}, any tracial positive linear functional $\tau_0$ on $A_1 \cap \Ped(A^\alpha)$
uniquely extends to a proper tracial weight $\tau$ on $A^\alpha$.
By Proposition \ref{Prop:KMS}, any $\alpha$-KMS weight on $A$ is of the form
$\tau\circ E^\alpha$ for some tracial weight $\tau$ on $A^\alpha$.
Moreover any tracial weight $\tau$ on $A^\alpha$ satisfies ${\rm Ped}(A^\alpha) \subset \mathcal{M}_\tau$.
Therefore $A_1 \subset \cM_\varphi^\alpha$.
Then, by Lemma 3.1.15 of \cite{Thom}, $\varphi|_{A_1}$ is $\alpha_{-i\beta/2}$-invariant.
Hence we only need to prove the next claim:
for a tracial weight $\tau$ on $A^\alpha$,
if the weight $\varphi:=\tau \circ E^\alpha$ satisfies the $\alpha$-$\beta$-KMS condition on $A_1$,
then in fact $\varphi$ satisfies the condition on $A$.

Observe that $\varphi$ is bounded on each ${\rm Her}(e_n; A)$, $n\in \IN$.
(See the first paragraph of the proof of Proposition \ref{Prop:trace}.)
Hence for any $a\in {\rm Her}(e_n; A)$ and $m\in \IN$,
one has
\[\varphi(R_m(a)^\ast R_m(a))= \varphi(\alpha_{-i\beta/2}(R_m(a)) \alpha_{-i\beta/2}(R_m(a))^\ast)\]
because
\[R_m({\rm Her}(e_n; A)) \subset {\rm Her}(e_n; A) \quad {\rm~ for~ all~ }m\in \IN,\quad \alpha_{-i\beta/2}({\rm Her}(e_n; A))\subset {\rm Her}(e_n; A),\]
and the equality is valid in the case $a\in e_n A_0 e_n$.
Then, for any $a\in {\rm Her}(e_n; A) \cap \mathcal{A}_\alpha$, we obtain
\begin{align*}\varphi(a^\ast a)&=\lim_{m \rightarrow \infty} \varphi(R_m(a)^\ast R_m(a))\\
&=\lim_{m \rightarrow \infty} \varphi(\alpha_{-i\beta/2}(R_m(a))\alpha_{-i\beta/2}(R_m(a))^\ast) \\
&=\lim_{m \rightarrow \infty} \varphi(R_m(\alpha_{-i\beta/2}(a)) R_m(\alpha_{-i\beta/2}(a))^\ast)\\
&=\varphi(\alpha_{-i\beta/2}(a)\alpha_{-i\beta/2}(a)^\ast).
\end{align*}
For a general $a\in \cM_{\varphi}^\alpha$,
the lower semi-continuity of $\varphi$ implies
\begin{align*}
\varphi(a^\ast a) &= \liminf_{n \rightarrow \infty} \lim_{m \rightarrow \infty} \varphi(e_n a^\ast e_m^2 a e_n)\\
&= \liminf_{n \rightarrow \infty} \lim_{m \rightarrow \infty} \varphi(e_m\alpha_{-i\beta/2}(a) e_n^2 \alpha_{-i\beta/2}(a)^\ast e_m)\\
&= \varphi(\alpha_{-i\beta/2}(a) \alpha_{-i\beta/2}(a)^\ast).
\end{align*}
Here the first and last equality follow from the lower semi-continuity of $\varphi$ and the inequality
\[\varphi(e_m b e_m) \leq \varphi(b) \quad{\rm~for~}b\in (\cM_\varphi)_+, m\in \IN,\]
which follows from the calculations
\[\varphi(b-e_m b e_m)=\tau(E^\alpha(b)-e_mE^\alpha(b)e_m) =\tau((1-e_m^2)^{1/2}E^\alpha(b)(1-e_m^2)^{1/2}) \geq 0.\]
Thus $\varphi$ satisfies the $\alpha$-$\beta$-KMS condition on $A$.
\end{proof}
\subsection{Pimsner algebras}
For basic facts on Pimsner algebras, we refer the reader to Section 4.6 of \cite{BO} and the original article \cite{Pim}.

Associated to each non-degenerate \Cs-correspondence $\mathcal{E}$ over $A$,
Pimsner \cite{Pim} introduced the universal \Cs-algebra $\mathcal{T}_\mathcal{E}$, called the \emph{Toeplitz--Pimsner algebra},
which is generated by the symbols $T_\xi$; $\xi\in \mathcal{E}$, and a copy of $A$, satisfying the relations
\begin{itemize}
\item $T_{a\xi}=a T_\xi$, $T_{\xi a}= T_\xi a$, $T_{\xi + \eta}=T_\xi + T_\eta$ for $\xi, \eta\in \mathcal{E}$, $a\in A$,
\item $T_\xi^\ast T_\eta= {\ip{\xi, \eta} }$ for $\xi, \eta\in \mathcal{E}$.
\end{itemize}
Note that by these conditions, the inclusion $A \subset \mathcal{T}_\mathcal{E}$ is non-degenerate.

Put $\mathcal{E}^{\otimes 0}:=A$ and we regard it as the \Cs-correspondence over $A$ by the left multiplication action.
For $n\geq 1$, denote by $\mathcal{E}^{\otimes n}$
the $n$-fold tensor product power of $\mathcal{E}$ (as the \Cs-correspondence over $A$).

A concrete realization of $\mathcal{T}_\mathcal{E}$ can be given on the full Fock space
\[\mathrm{F}(\mathcal{E}):=\bigoplus_{n \in \IN} \mathcal{E}^{\otimes n}\]
over $\mathcal{E}$.
The representation $\mathcal{T}_\mathcal{E} \subset \IB_A(\mathrm{F}(\mathcal{E}))$
is given by corresponding $A\subset \mathcal{T}_\mathcal{E}$ to the obvious left $A$-action on $\mathrm{F}(\mathcal{E})$ and sending the symbol $T_\xi$; $\xi\in \mathcal{E}$,
to the (left) creation operator (denoted by the same symbol $T_\xi$)
\[\mathrm{F}(\mathcal{E})\ni \eta \mapsto \xi \otimes \eta.\] 
It is not hard to check that, or it is clear from the realization $\mathcal{T}_\mathcal{E} \subset \IB_A({\mathrm F}(\mathcal{E}))$, that for each $n\in \IN$, the map
\[\xi_1 \otimes \xi_2 \otimes \cdots \otimes \xi_n \mapsto T_{\xi_1} T_{\xi_2} \cdots T_{\xi_n}; \xi_1, \xi_2, \ldots, \xi_n \in \mathcal{E}\]
extends to an isometric linear map 
\[T\colon \mathcal{E}^{\otimes n} \rightarrow \mathcal{T}_\mathcal{E};~ \xi \mapsto T_\xi.\]
The map satisfies $T_\xi T_\eta^\ast = \ip{\xi, \eta}$ for $\xi, \eta \in \mathcal{E}^{\otimes n}$.
Let $P\colon \mathrm{F}(\mathcal{E}) \rightarrow \mathcal{E}^{\otimes 0}$ be the orthogonal projection.
Then it is not hard to check that
$P \mathcal{T}_\mathcal{E} P =\IK_A(\mathcal{E}^{\otimes 0})$.
By identifying $\IK_A(\mathcal{E}^{\otimes 0})$ with $A$ in the obvious way,
we obtain the conditional expectation $E_A \colon \mathcal{T}_\mathcal{E} \rightarrow A$
given by $E_A(a):=PaP; a\in \mathcal{T}_\mathcal{E}$.

The Toeplitz--Pimsner algebra $\mathcal{T}_\mathcal{E}$ has a natural quotient $\mathcal{O}_\mathcal{E}$, called the \emph{Cuntz--Pimsner algebra},
which is given by the ideal generated by $\mathcal{T}_\mathcal{E} \cap \IK_A(\mathcal{E}^{\otimes 0})$.
Here we regard $\IK_A(\mathcal{E}^{\otimes 0})\subset \IK_A(\mathrm{F}(\mathcal{E}))$ and $\mathcal{T}_\mathcal{E} \subset \IB_A(\mathrm{F}(\mathcal{E}))$ in the canonical way.
For $\xi\in \mathcal{E}^{\otimes n};~n\in \IN$, denote by $S_\xi$ the quotient image of $T_\xi$ in $\mathcal{O}_\mathcal{E}$.

By the universality of $\mathcal{T}_\mathcal{E}$ and $\mathcal{O}_\mathcal{E}$, we have natural circle group actions
\[\sigma\colon \IT \acts \mathcal{T}_\mathcal{E}\quad {\rm~and~}\quad \bar{\sigma}\colon \IT \acts \mathcal{O}_\mathcal{E}\]
given by
\[\sigma_z(T_\xi):=zT_\xi \quad {\rm~and~}\quad \bar{\sigma}_z(S_\xi):=zS_\xi \quad {\rm~~for~} z\in \IT,~\xi\in \mathcal{E}.\]
These actions are referred to as the \emph{gauge action} on $\mathcal{T}_\mathcal{E}$, $\mathcal{O}_\mathcal{E}$, respectively.

More generally, any strongly continuous unitary representation $u \colon G\rightarrow \mathcal{U}_A(\mathcal{E}) \cap A'$
of a locally compact group $G$ induces the actions
\[\gamma \colon G \acts \mathcal{T}_\mathcal{E} \quad{\rm~and~}\quad \bar{\gamma} \colon G \acts \mathcal{O}_\mathcal{E}\]
given by
\[\gamma_g(T_\xi):=T_{u_g(\xi)},\quad \bar{\gamma}_g(S_\xi):=S_{u_g(\xi)}\quad {\rm~for~}g\in G, \xi\in \mathcal{E}.\]
These actions are referred to as the \emph{quasi-free actions induced from $u$}.

The fixed point algebras $\mathcal{T}_\mathcal{E}^\sigma$ and $\mathcal{O}_\mathcal{E}^{\bar{\sigma}}$
of the gauge actions play substantial roles in the analysis of the Pimsner algebras $\mathcal{T}_\mathcal{E}$, $\mathcal{O}_\mathcal{E}$.
This is because they have the following tight relation with the coefficient \Cs-algebra $A$.
For $n\in \IN$, define $B_n \subset \mathcal{T}_\mathcal{E}$
to be the \Cs-subalgebra of $\mathcal{T}_\mathcal{E}$
generated by elements $T_\xi T_\eta^\ast$, $\xi, \eta \in \mathcal{E}^{\otimes n}$.
Note that $B_0=A$.
One has a $\ast$-isomorphism $\Phi_n \colon B_n \rightarrow \IK_A(\mathcal{E}^{\otimes n})$
given by
\[T_\xi T_\eta^\ast \mapsto \theta_{\xi, \eta} \quad {\rm ~for~}\xi, \eta \in \mathcal{E}^{\otimes n}.\]
Here $\theta_{\xi, \eta}\in \IK_A(\mathcal{E}^{\otimes n})$
denotes the rank one operator given by
\[\theta_{\xi, \eta}(\zeta):=\xi\ip{\eta, \zeta};\quad \zeta \in \mathcal{E}^{\otimes n}.\]
Hence $B_n$ is strongly Morita equivalent to $A$.
Note that $\Phi_n$ coincides with the canonical action of $B_n$ on $\mathcal{E}^{\otimes n} \subset \mathrm{F}(\mathcal{E})$.

For $n\in \IN$, set
\[B_{\leq n}:=B_0 + B_1 + \cdots + B_n.\]
Then it is known that $B_{\leq n}$ is a \Cs-subalgebra of $\mathcal{T}^\sigma_\mathcal{E}$,
which contains $B_n$ as an ideal.
It is easy to see (by using $E^\sigma$) that the increasing union $\bigcup_{n\in \IN} B_{\leq n}$
is dense in $\mathcal{T}^\sigma_\mathcal{E}$.
Moreover, for each $n\in\IN$, we have the natural (splitting) short exact sequence
\[\{0\} \rightarrow B_n \rightarrow B_{\leq n} \rightarrow B_{\leq n-1} \rightarrow \{0\}.\]
In particular, the sequence $(B_n)_{n\in \IN}$ of subspaces are linearly independent in $\mathcal{T}_\mathcal{E}^\sigma$.

For each $n\in \IN$,
the quotient map $\mathcal{T}_\mathcal{E} \rightarrow \mathcal{O}_\mathcal{E}$
is injective on $B_n$. For short, we denote the (isomorphic) image of $B_n$ in $\mathcal{O}_\mathcal{E}$
by the same symbol.
We remark that, contrary to the Toeplitz--Pimsner algebra case, in $\mathcal{O}_\mathcal{E}$,
the subspaces $(B_n)_{n\in \IN}$ are not necessarily linearly independent.
In Section \ref{section:fr}, we only consider the case
that the inclusions $B_n \subset B_{n+1}$; $n\in \IN$, hold true in $\mathcal{O}_\mathcal{E}$.

\section{A \Cs-correspondence $\E$ associated to a sequence $\fr$ of $\ast$-endomorphisms}\label{section:corr}
In this section, we introduce the ingredients of our construction.
Because it is easy to extend the construction and results to the equivariant setting,
in this and the next three sections, we concentrate on the non-equivariant case.
We discuss the equivariant version of our construction and its basic properties in Section \ref{section:equiv}.

Let $A$ be a separable \Cs-algebra.
Let $\fr=(\rho_n)_{n\in \IN}$ be a sequence of non-degenerate $\ast$-endomorphisms on $A$ satisfying the following two conditions:
\begin{description}
\item[($\diamondsuit1$)] For any $a\in A$, $\limsup_{n \rightarrow \infty}\|\rho_n(a)\|=\|a\|$.
\item[($\diamondsuit2$)] There is no proper ideal $I\subset A$ satisfying
$\rho_n(I)\subset I$ for all $n\in \IN$.
\end{description}
On the right Hilbert \Cs-$A$-module ${\mathcal{E}_{\fr}}:=\ell^2(\IN) \otimes A$,
we define the left $A$-action
\[{\pi_\fr} \colon A \rightarrow \IB_A({\mathcal{E}_{\fr}})\]
to be
\[{\pi_\fr}(a)(\delta_n \otimes b) := \delta_n \otimes \rho_n(a)b \quad{\rm~for~}a, b\in A, n\in \IN.\]
(In other words, $\pi_\fr=\bigoplus_{n \in \IN}\rho_n$.)
With this left $A$-action ${\pi_\fr}$, we regard ${\mathcal{E}_{\fr}}$ as a \Cs-correspondence over $A$.
By the condition ($\diamondsuit1$) of $\fr$, we have
\[{\pi_\fr}(A)\cap \IK_A(\E)=\{0\}.\]
Hence the two Pimsner algebras $\T$ and $\mathcal{O}_{\mathcal{E}_{\fr}}$ are the same.
We use the former notation $\T$,
because some results in the next two sections equally work for the Toeplitz--Pimsner algebras even for the finite rank analogue studied in Section \ref{section:fr}.

For a sequence $\fx=( x_n)_{n\in \IN} \in \IR^\IN$,
we denote by
\[u_\fx \colon \mathbb{R} \rightarrow \mathcal{U}_A({\mathcal{E}_{\fr}})\]
 the strongly continuous unitary representation
given by
\[u_{\fx, t} (\delta_n \otimes a):=e^{i x_n t} \delta_n \otimes a\quad {\rm~for~}a\in A, n\in \IN, t\in \IR.\]
Obviously $u_{\fx}$ commutes with ${\pi_\fr}(A)$.
Hence $u_\fx$ defines the quasi-free flow
\[\gamma_\fx \colon \mathbb{R} \curvearrowright \mathcal{T}_{\mathcal{E}_{\fr}},\]
which is given by
\[\gamma_{\fx, t}(T_{\xi})=T_{u_{\fx, t}(\xi)}\quad {\rm~for~} \xi \in {\mathcal{E}_{\fr}}, t\in \IR.\]
Clearly, the flow $\gamma_{\fx}$ is almost periodic.

We next introduce some notations which are useful in our analysis.
For notational convenience, we put $\IN^0:=\{\emptyset\}$.
We define
\[\fN:=\bigsqcup_{n\in \IN} \IN^n.\]
For each $\emptyset \neq \n=(i_1, i_2, \ldots, i_n) \in \fN$,
we define
\[|\n|:=n,\]
\[\rho_\n:=\rho_{i_n} \circ \rho_{i_{n-1}} \circ \cdots \circ \rho_{i_1},\]
\[ x_\n:=\sum_{k=1}^n x_{i_k}.\]
For notational convenience, we set
\[|\emptyset|:=0,\quad \rho_\emptyset:= \id_A, \quad x_\emptyset:=0.\]
For $n\in \IN$,
put
\[\fN_{\leq n}:=\{\n\in \fN: |\n|\leq n\}.\]
For $\n=(i_1, \ldots, i_n), \m=(j_1, \ldots, j_m) \in \fN$,
put $\n \star \m :=(i_1, \ldots, i_n, j_1, \ldots, j_m)\in \fN$. 

Observe that for any $n\in \IN$, we have an isomorphism
\[\mathcal{E}_{\fr}^{\otimes n} \cong \ell^2(\IN^ n) \otimes A\]
of right Hilbert \Cs-$A$-modules given by
\[(\delta_{i_1}\otimes a_1) \otimes (\delta_{i_2}\otimes a_2) \otimes \cdots \otimes (\delta_{i_n}\otimes a_n) \mapsto \delta_{\n} \otimes \rho_{\n_1}(a_1)\rho_{\n_2}(a_2)\cdots \rho_{\n_{n}}(a_n)\]
for $\n=(i_1, i_2, \ldots, i_n)\in \IN^n$ and $a_1, \ldots, a_n \in A$, where $\n_k:=(i_{k+1}, i_{k+2}, \cdots, i_n)$ for $k=1, \ldots, n$.
Throughout this article, we frequently identify $\mathcal{E}_{\fr}^{\otimes n}$ with $\ell^2(\IN^ n) \otimes A$ via this isomorphism without mentioned.
Note that in the latter picture, the left $A$-action $\pi_\fr$ and the unitary representation $u_{\fx}^{\otimes n}$ are given by
\[{\pi_\fr}(a)(\delta_\n \otimes b)=\delta_\n \otimes \rho_\n(a)b,\quad u_{\fx, t}^{\otimes n}(\delta_\n \otimes b)=e^{ix_\n t }\delta_\n \otimes b\]
for $a, b\in A$, $\n \in \IN^n$, $t\in \IR$.

Note that for any $\n\in \fN$ and $a\in A$,
the element $T_{\delta_\n\otimes a}$ is $\gamma_\fx$-entire.
Indeed one has the analytic extension
\[\gamma_{\fx, z}(T_{\delta_\n \otimes a}) :=e^{i x_\n z} T_{\delta_\n\otimes a},\quad z\in \IC\]
of the orbit map $t \mapsto \gamma_{\fx, t}(T_{\delta_\n \otimes a})$.

Take an approximate unit $(e_n)_{n\in \IN}$ of $A$.
For each $\n \in \fN$, 
define an isometric element $T_\n\in \cM(\T)$
to be the strict limit of the sequence
$(T_{\delta_\n\otimes e_n})_{n \in \IN}$ in $\T$.
(It is not hard to check that this sequence in fact converges to an isometric element in the strict topology,
and the limit does not depend on the choice of $(e_n)_{n\in \IN}$.)
Alternatively, in the concrete realization $\T \subset \IB_A({\rm F}(\E))$,
the element $T_{\n} \in \cM(\T) \subset \IB_A({\rm F}(\E))$ corresponds to the creation operator
\[\xi \mapsto \delta_\n \otimes \xi;~\xi\in {\rm F}(\E).\]
Note that for any $a\in A$ and any $\n\in \fN$,
a direct calculation shows
\[a T_\n =T_\n \rho_\n(a).\]

\section{Simplicity of $\T$ and $\T\rtimes_{\gamma_\fx} \IR$}
In this section, we prove the simplicity of $\T$.
Then, under mild conditions on $\fr$ and $\fx$, we further show
the simplicity of the reduced crossed product \Cs-algebra $\T\rtimes_{\gamma_\fx} \IR$.

In the proof of these simplicity results, the gauge fixed point algebra $\T^{\sigma}$ plays a crucial role.
We first study its basic properties.
\begin{Lem}\label{Lem:nondeg}
For any $n \in \IN$, the canonical representation $\pi_n \colon B_{\leq n} \rightarrow \IB_A(\E^{\otimes n})$ is faithful.
In particular, the ideal $B_n \subset B_{\leq n}$ is essential.
\end{Lem}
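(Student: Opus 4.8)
The plan is to prove the faithfulness of $\pi_n$ by induction on $n$, using the splitting short exact sequence $\{0\}\to B_n\to B_{\leq n}\to B_{\leq n-1}\to\{0\}$ together with the fact (recalled above) that $\pi_n$ restricts on the ideal $B_n$ to the isomorphism $\Phi_n\colon B_n\xrightarrow{\cong}\IK_A(\E^{\otimes n})$. For $n=0$, the map $\pi_0\colon B_0=A\to\IB_A(\E^{\otimes 0})=\cM(A)$ is the canonical embedding by left multiplication, hence faithful. For the inductive step I would first record how $\pi_n$ acts on the lower-degree part: under the identification $\E^{\otimes n}\cong\E^{\otimes(n-1)}\otimes_A\E$, every $b'\in B_{\leq n-1}$ acts only within the first $n-1$ tensor legs, so that $\pi_n(b')=\pi_{n-1}(b')\otimes 1_{\E}$, whereas $\pi_n|_{B_n}=\Phi_n$ has image $\IK_A(\E^{\otimes n})$.

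Assuming $\pi_{n-1}$ is faithful, set $K:=\ker\pi_n$, an ideal of $B_{\leq n}$. Since $\pi_n|_{B_n}=\Phi_n$ is injective, $K\cap B_n=\{0\}$. For $b\in K$, write $b=b_n+b'$ with $b_n\in B_n$ and $b'\in B_{\leq n-1}$ (possible by the linear independence of the subspaces $B_m$). Applying $\pi_n$ gives $\pi_{n-1}(b')\otimes 1_{\E}=\pi_n(b')=-\Phi_n(b_n)\in\IK_A(\E^{\otimes n})$. If I can show that compactness of $\pi_{n-1}(b')\otimes 1_{\E}$ forces $\pi_{n-1}(b')=0$, then the induction hypothesis yields $b'=0$, whence $\Phi_n(b_n)=0$, $b_n=0$, and $b=0$; so $K=\{0\}$ and $\pi_n$ is faithful.

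The crux --- and the step I expect to be the main obstacle --- is therefore the following general claim, which is exactly where condition $(\diamondsuit 1)$ is used: \emph{if $S\in\IB_A(\E^{\otimes(n-1)})$ satisfies $S\otimes 1_{\E}\in\IK_A(\E^{\otimes(n-1)}\otimes_A\E)$, then $S=0$}. Recall that $(\diamondsuit 1)$ makes $\pi_\fr$ injective (if $\pi_\fr(a)=0$ then $\|a\|=\limsup_n\|\rho_n(a)\|=0$) and also yields $\pi_\fr(A)\cap\IK_A(\E)=\{0\}$. To prove the claim I would reduce to self-adjoint $S$ and fix $\xi\in\E^{\otimes(n-1)}$. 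A direct computation gives $\theta_{\xi,\xi}S\theta_{\xi,\xi}=\theta_{\xi a,\xi}$ with $a=\langle\xi,S\xi\rangle$, and then, compressing by the adjointable operator $V\colon\E\to\E^{\otimes(n-1)}\otimes_A\E$, $\eta\mapsto\xi\otimes\eta$, one finds $V^*(\theta_{\xi a,\xi}\otimes 1_{\E})V=\pi_\fr(bab)$ with $b=\langle\xi,\xi\rangle$. Since $S\otimes 1_{\E}$ is compact, so is this element, hence $\pi_\fr(bab)\in\pi_\fr(A)\cap\IK_A(\E)=\{0\}$, and injectivity of $\pi_\fr$ gives $\langle\xi,\xi\rangle\langle\xi,S\xi\rangle\langle\xi,\xi\rangle=0$ for all $\xi$. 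A short functional-calculus argument then finishes: with $c:=\langle\xi,S\xi\rangle$, the bounds $-\|S\|b\leq c\leq\|S\|b$ force $c=pcp$ for the support projection $p$ of $b$, and $bcb=0$ gives $f_\epsilon(b)cf_\epsilon(b)=(b+\epsilon)^{-1}(bcb)(b+\epsilon)^{-1}=0$ for $f_\epsilon(t)=t/(t+\epsilon)$; letting $\epsilon\to0$ yields $pcp=c=0$. Hence $\langle\xi,S\xi\rangle=0$ for every $\xi$, so $S=0$ by polarization.

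Finally, for the ``in particular'' statement, I would deduce essentiality of $B_n$ in $B_{\leq n}$ from the faithfulness of $\pi_n$: if $c\in B_{\leq n}$ annihilates $B_n$, then $\pi_n(c)$ annihilates $\pi_n(B_n)=\IK_A(\E^{\otimes n})$; as $\IK_A(\E^{\otimes n})$ is an essential ideal of $\IB_A(\E^{\otimes n})$, this forces $\pi_n(c)=0$, and hence $c=0$ by faithfulness of $\pi_n$.
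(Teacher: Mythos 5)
Your proof is correct, and it takes a genuinely different route from the paper's. The paper does not induct on $n$: it takes $b=\sum_{i=0}^{n}b_i\in\ker(\pi_n)$ with $b_i\in B_i$ and, for each $k$, compresses by the projections $1-P_F$, where $P_F=\sum_{\n\in F}T_\n T_\n^\ast$ and $F$ runs over the finite subsets of $\IN^k$; the high-degree components satisfy $b_i(1-P_F)\to 0$ along this net, while condition $(\diamondsuit 1)$ makes the embeddings $a\mapsto a\otimes 1$ onto tails indexed over the complement of a finite index set isometric, so that $\|\pi_n(\sum_{i<k}b_i(1-P_F))\|=\|\pi_{k-1}(\sum_{i<k}b_i)\|$ for every $F$; letting $F$ grow forces each $b_i$ into $\ker(\pi_n)$, and a second appeal to $(\diamondsuit 1)$ ($\pi_n$ is isometric on each $B_i$) gives $b_i=0$. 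You instead run an induction on $n$ with the two-term decomposition $b=b_n+b'$ and isolate the key lemma that $S\otimes 1_{\E}$ compact forces $S=0$, deriving it from the two facts the paper records as consequences of $(\diamondsuit 1)$, namely the injectivity of $\pi_\fr$ and $\pi_\fr(A)\cap\IK_A(\E)=\{0\}$. Incidentally, your lemma admits a much shorter proof with the same operator $V$: one computes directly $V^\ast(S\otimes 1_{\E})V=\pi_\fr(\ip{\xi,S\xi})$, which is compact as a compression of a compact operator, hence zero, so $\ip{\xi,S\xi}=0$ and polarization finishes; this bypasses the $\theta_{\xi,\xi}$-sandwich, the reduction to self-adjoint $S$, and the $A^{\ast\ast}$/support-projection step (all of which are nevertheless correct as you wrote them). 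What the paper's argument buys is a uniform, induction-free treatment of all components at once, with a net-compression technique tailored to the infinite-rank setting; what yours buys is a clean structural principle that pinpoints exactly why infinite rank matters: in the finite-rank setting of Section 6 one has $\pi_\fu(A)\subset\IK_A(\cF)$, your key lemma fails, and correspondingly $B_n$ is indeed no longer essential in $B_{\leq n}$ there. Your deduction of essentiality from faithfulness (via essentiality of $\IK_A(\E^{\otimes n})$ in $\IB_A(\E^{\otimes n})$) is the standard one and is exactly what the paper's ``in particular'' leaves implicit.
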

\begin{proof}The case $n=0$ is trivial, hence we may assume that $n \geq 1$.
Take $b\in \ker(\pi_n)$.
Write $b=\sum_{i=0}^n b_i$; $b_i\in B_i$ for $i=0, 1, \ldots, n$.
For each $1\leq k\leq n$, let $\mathfrak{F}(\IN^k)$ denote the directed set of all finite subsets of $\IN^k$.
For $F\in \mathfrak{F}(\IN^k)$, set $P_{F}:=\sum_{\n \in F} T_\n T_\n^\ast$, and
let $F^{(k)} \subset \IN$ denote the image of $F$ by the $k$th coordinate projection $\IN^k \rightarrow \IN$.
Note that for any $F\in \mathfrak{F}(\IN^k)$, one has
$ \E^{\otimes k-1}\otimes P_{F^{(k)}} \E ^{\otimes n-k+1} \subset P_F(\E^{\otimes n}).$

On the one hand, for any $k \leq i \leq n$, one has
\[\lim_{F\in \mathfrak{F}(\IN^k)} b_i P_F =b_i.\]
On the other hand, for any $F\in\mathfrak{F}(\IN^k)$, we have
\[\| \pi_n( \sum_{i=0}^{k-1} b_i(1-P_F))\|=\|\pi_n( \sum_{i=0}^{k-1} b_i)\|.\]
Indeed, by condition $(\diamondsuit1)$ of $\fr$, the $\ast$-homomorphisms
\[\IB_A(\E^{\otimes {k-1}}) \rightarrow \IB_A( \E^{\otimes n}) \quad {\rm~and~} \quad \IB_A(\E^{\otimes {k-1}}) \rightarrow \IB_A(\E^{\otimes k-1}\otimes P_{F^{(k)}} \E ^{\otimes n-k+1})\]
given by
\[a\mapsto a \otimes 1_{\E^{\otimes n-k+1}} \quad{\rm~and~}\quad a\mapsto a \otimes 1_{P_{F^{(k)}} \E ^{\otimes n-k+1}}\]
are injective, hence both norms are equal to $\|\pi_{k-1}(\sum_{i=0}^{k-1} b_i)\|$.

These observations force $b_i\in \ker(\pi_n)$ for all $i=0, 1, \ldots, n$.
Again by condition $(\diamondsuit1)$ of $\fr$,
$\pi_n$ is isometric on each $B_i$; $i=0, 1, \ldots, n$.
Hence $b=0$.
\end{proof}
\begin{Lem}\label{Lem:giid}
There is no proper ideal $I$ of $\T^\sigma$ satisfying
$T_\n ^\ast\cdot I \cdot T_\n \subset I$ for all $\n\in \fN$.
\end{Lem}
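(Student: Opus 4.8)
The plan is to show that any \emph{nonzero} ideal $I$ of $\T^\sigma$ satisfying $T_\n^\ast I T_\n\subseteq I$ for all $\n\in\fN$ is forced to be all of $\T^\sigma$ (the zero ideal being the only proper invariant one). Two preliminary observations will drive everything. First, since $T_\n$ has gauge-degree $|\n|$, conjugation by $T_\n^\ast$ commutes with the gauge action and hence preserves $\T^\sigma$. Second, on the coefficient algebra this conjugation implements $\fr$: using the relation $a T_\n=T_\n\rho_\n(a)$ together with $T_\n^\ast T_\n=1$ gives $T_\n^\ast a T_\n=\rho_\n(a)$ for $a\in A=B_0$. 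This last identity is the conceptual heart of the statement: contrary to the Cuntz algebra situation, the isometries act nontrivially on $A$, so the minimality hypothesis $(\diamondsuit2)$ on $\fr$ is genuinely needed rather than automatic.

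First I would produce a nonzero element of $I\cap A$. Let $P\in\IB_A(\mathrm F(\E))$ be the projection onto $\E^{\otimes 0}=A$, so that $E_A=P(\cdot)P$. A short computation shows $P\,\T^\sigma\subseteq A$ and $\T^\sigma P\subseteq A$ (on $B_k$ with $k\geq1$ one has $PT_\xi=0$, while on $B_0$ the map is the identity), whence $P\in\cM(\T^\sigma)$; consequently $E_A$ maps $I$ into $I\cap A$, because multipliers preserve closed two-sided ideals. Now fix $0\neq x\in I_+$. For each $\n\in\fN$ the element $T_\n^\ast x T_\n$ lies in $I$ by hypothesis, and $E_A(T_\n^\ast x T_\n)=(T_\n P)^\ast x\,(T_\n P)$ is exactly the compression of $x$ to the summand $\delta_\n\otimes A\subset\mathrm F(\E)$, since $T_\n P\colon a\mapsto\delta_\n\otimes a$. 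Writing $x=c^\ast c$, if all of these compressions vanished then $c(\delta_\n\otimes a)=0$ for every $\n$ and $a$; as the vectors $\delta_\n\otimes a$ span a dense submodule of $\mathrm F(\E)$ this forces $c=0$, a contradiction. Hence for some $\n$ the element $E_A(T_\n^\ast x T_\n)$ is a nonzero member of $A\cap I$.

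Next, the intersection $A\cap I$ is an ideal of $A$, and by the opening identity it is invariant under every $\rho_\n$: indeed $\rho_\n(A\cap I)=T_\n^\ast(A\cap I)T_\n\subseteq I\cap\rho_\n(A)\subseteq A\cap I$, and in particular it is invariant under each generator $\rho_n$. Being nonzero, condition $(\diamondsuit2)$ forces $A\cap I=A$, that is $A=B_0\subseteq I$. It then remains to upgrade $B_0\subseteq I$ to $\T^\sigma\subseteq I$, i.e.\ to show that $B_0$ generates $\T^\sigma$ as an ideal. For this I would verify $\overline{B_n A B_n}=B_n$ for all $n$: using $T_\xi T_\zeta^\ast\,a\,T_{\zeta'}T_\eta^\ast=T_\xi\langle\zeta,\pi_\fr(a)\zeta'\rangle T_\eta^\ast$ and the isomorphism $\Phi_n\colon B_n\cong\IK_A(\E^{\otimes n})$, this reduces to $\overline{\IK_A(\E^{\otimes n})\,\pi_\fr(A)\,\IK_A(\E^{\otimes n})}=\IK_A(\E^{\otimes n})$, which holds since each $\rho_\n$ is non-degenerate and hence $\pi_\fr$ acts non-degenerately on each $\E^{\otimes n}$. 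As $B_nAB_n\subseteq\T^\sigma A\T^\sigma$, this yields $B_n=\overline{B_nAB_n}\subseteq\cI(A;\T^\sigma)\subseteq I$ for every $n$; since $\bigcup_n B_{\leq n}$ is dense in $\T^\sigma$ and $I$ is closed, we conclude $I=\T^\sigma$.

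I expect the first step to demand the most care, for two reasons: one must verify that $P$ is genuinely a multiplier of $\T^\sigma$, so that the conditional expectation $E_A$ respects the ideal $I$, and one must rule out the possibility that the compressions $E_A(T_\n^\ast x T_\n)$ annihilate a given nonzero positive $x$ for all $\n$ simultaneously. The fullness computation ($\overline{B_nAB_n}=B_n$) is routine once non-degeneracy of $\pi_\fr$ is recorded, and the reduction to $(\diamondsuit2)$ is immediate given $T_\n^\ast a T_\n=\rho_\n(a)$. The whole argument thus isolates precisely where the two minimality conditions on $\fr$ enter: $(\diamondsuit1)$ ensures $\pi_\fr$ is isometric and non-degenerate (used for fullness), while $(\diamondsuit2)$ is what pins down $A\cap I=A$.
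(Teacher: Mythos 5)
The second and third steps of your argument are sound: the identity $T_\n^\ast a T_\n=\rho_\n(a)$ together with $(\diamondsuit2)$ upgrades $A\cap I\neq\{0\}$ to $A\subset I$ exactly as in the paper's proof, and your fullness computation $\overline{B_nAB_n}=B_n$ correctly justifies the final implication $A\subset I\Rightarrow I=\T^\sigma$, a point the paper leaves implicit. The gap is in your first step, which is where the real difficulty of the lemma sits: the claim $P\in\cM(\T^\sigma)$ is false. For $a\in B_0$ one has $Pa=PaP$, and this is \emph{not} $a$ again; it is the compression of $a$ to the vacuum summand, an element of $\IK_A(\E^{\otimes 0})$, and that copy of $A$ inside $\IB_A(\mathrm{F}(\E))$ is different from the diagonal copy $B_0$ (the identification $\IK_A(\E^{\otimes 0})\cong A$ used to define $E_A$ is an abstract isomorphism, not the identity of $\IB_A(\mathrm{F}(\E))$). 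In fact the two copies meet only at $0$: condition $(\diamondsuit1)$ gives $\pi_\fr(A)\cap\IK_A(\E)=\{0\}$, which is precisely the paper's observation that $\T=\mathcal{O}_{\E}$, i.e.\ that $\T\cap\IK_A(\E^{\otimes 0})=\{0\}$. Hence $Pa\notin\T^\sigma$ for every nonzero $a\in A$, so $P\T^\sigma\not\subset\T^\sigma$ and the appeal to ``multipliers preserve ideals'' collapses. Tellingly, in the finite-rank setting of Section~\ref{section:fr}, where the vacuum projection \emph{is} a multiplier of $\mathcal{T}_{\cF}$, the analogue of the present lemma for $\mathcal{T}_{\cF}^{\sigma}$ is false (the gauge-invariant part of $\ker(\mathcal{T}_{\cF}\rightarrow\C)$ is a nonzero proper invariant ideal); so no soft argument of this kind can succeed --- one must exploit the infinite-rank hypothesis, which is exactly what destroys the multiplier claim.

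What your step actually needs is that $E_A$ maps every ideal $I$ of $\T^\sigma$ (or at least every invariant one) into itself, and this is not a formal fact: restricted to $\T^\sigma$, $E_A$ is the $\ast$-homomorphism splitting the extension $\{0\}\rightarrow\ker(E_A|_{\T^\sigma})\rightarrow\T^\sigma\rightarrow A\rightarrow\{0\}$, and projections onto a complemented subalgebra along an ideal do not preserve ideals in general (for $\IC 1\subset C([0,1])$ with $E(f)=f(0)1$, the ideal of functions vanishing at $1/2$ is not preserved). Your compression argument does correctly show that $E_A(T_\n^\ast xT_\n)\neq0$ for some $\n$, i.e.\ that $I\not\subset\ker(E_A|_{\T^\sigma})$, but without ideal-preservation this does not yield $I\cap A\neq\{0\}$. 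The paper obtains that intersection by a different mechanism, namely Lemma~\ref{Lem:nondeg}: density of $\bigcup_{n}B_{\leq n}$ gives $I\cap B_{\leq n}\neq\{0\}$ for some $n$; essentiality of $B_n$ in $B_{\leq n}$ (the content of that lemma) yields $I\cap B_n\neq\{0\}$; and since every ideal of $B_n\cong\IK(\ell^2(\IN^n))\otimes A$ has the form $\IK(\ell^2(\IN^n))\otimes J_0$ for an ideal $J_0$ of $A$, conjugation by $T_\n$ for any $\n\in\IN^n$ places the nonzero ideal $J_0$ inside $I\cap A$. You would need to replace your first step by this argument, or else supply a genuine proof that $E_A|_{\T^\sigma}$ preserves invariant ideals; as written, the proof is incomplete.
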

\begin{proof}
Let $I \subset \T^\sigma$ be an ideal as in the statement.
Since the increasing union $\bigcup_{n\in \IN} B_{\leq n}$ is dense in $\T^\sigma$,
one can take $n\in \IN$ satisfying
\[B_{\leq n} \cap I \neq \{0\}.\]
By Lemma \ref{Lem:nondeg}, this implies $B_n \cap I \neq \{0\}$. 
Then, for any $\n \in \IN^n$, we have
\[\{0\} \neq T_\n ^\ast \cdot (B_n \cap I) \cdot T_\n \subset A \cap I.\]
Moreover, for any $n\in \IN$, we have
\[\rho_n(A \cap I)=T_n ^\ast \cdot (A\cap I) \cdot T_{n} \subset A\cap I.\]
This together with condition ($\diamondsuit2$) of $\fr$ implies
$A \subset I$.
Hence
$I=\T^\sigma$.
\end{proof}
\begin{Thm}\label{Thm:Tsimple}
The Toeplitz--Pimsner algebra $\mathcal{T}_{\mathcal{E}_{\fr}}$ is simple.
\end{Thm}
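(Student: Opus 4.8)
The plan is to show that every nonzero closed two-sided ideal $J\subset\T$ contains $A$; since the inclusion $A\subset\T$ is non-degenerate, $A$ contains an approximate unit of $\T$, so $A\subset J$ immediately forces $J=\T$. The reduction to the gauge fixed point algebra runs as follows. Suppose we have established the key step $J\cap\T^\sigma\neq\{0\}$ and set $I:=J\cap\T^\sigma$. This is a nonzero ideal of $\T^\sigma$, and since each $T_\n$ lies in $\cM(\T)$ and $\sigma_z(T_\n)=z^{|\n|}T_\n$, the map $x\mapsto T_\n^\ast x T_\n$ preserves both $J$ (as $J$ is an ideal and $T_\n\in\cM(\T)$) and $\T^\sigma$. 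Hence $T_\n^\ast\,I\,T_\n\subset I$ for all $\n\in\fN$, and Lemma~\ref{Lem:giid} yields $I=\T^\sigma$, so $A=B_0\subset J$ and we are done. It therefore remains to prove $J\cap\T^\sigma\neq\{0\}$, which is the main obstacle.

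To produce a nonzero element of $J\cap\T^\sigma$ — in fact of $J\cap A$ — I would start from a nonzero $x\in J_+$ and first arrange that its vacuum-block expectation is nonzero. Since the vectors $\delta_\n\otimes a$ ($\n\in\fN$, $a\in A$) span a dense submodule of the Fock space $\mathrm{F}(\E)$, the diagonal compressions $E_A(T_\n^\ast x T_\n)=P\,T_\n^\ast x T_\n\,P$ cannot all vanish, for otherwise $x^{1/2}$ would annihilate a dense submodule and $x$ would be $0$. Fixing $\n$ with $E_A(T_\n^\ast x T_\n)\neq 0$ and replacing $x$ by $x':=T_\n^\ast x T_\n\in J_+$ (again using $T_\n\in\cM(\T)$), I may assume $E_A(x')\neq 0$.

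The heart of the argument is a collapsing step exploiting the coordinate structure of $\E=\ell^2(\IN)\otimes A$. I would approximate $x'$ within $\varepsilon$ by an element $b'$ of finite gauge-degree involving only the finitely many coordinates $\{0,1,\dots,R\}$ of $\ell^2(\IN)$, chosen so that $E_A(b')$ is within $\varepsilon$ of $E_A(x')$, hence nonzero. For any \emph{fresh} coordinate $m_0>R$, the identity $T_{m_0}^\ast T_\xi=\langle\delta_{m_0}\otimes e,\xi\rangle_A=0$ (valid whenever $\xi$ is supported on coordinates $\le R$) annihilates every monomial $T_\zeta\, a\, T_\eta^\ast$ possessing a creation or an annihilation factor; only the degree-zero, vacuum-level part survives, giving the exact identity
\[
T_{m_0}^\ast\, b'\, T_{m_0}=\rho_{m_0}\bigl(E_A(b')\bigr).
\]
By condition ($\diamondsuit1$) I would choose $m_0>R$ with $\|\rho_{m_0}(E_A(b'))\|$ close to $\|E_A(b')\|>0$. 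Then $y:=T_{m_0}^\ast x' T_{m_0}\in J_+$ satisfies $\|y-\rho_{m_0}(E_A(b'))\|<\varepsilon$, so $c:=E_A(y)\in A_+$ has $\|c-\rho_{m_0}(E_A(b'))\|<\varepsilon$ and $\|y-c\|<2\varepsilon$, while $\|c\|$ is bounded below independently of $\varepsilon$. A standard perturbation lemma then gives a contraction $d$ in the minimal unitization of $\T$ with $(c-2\varepsilon)_+=d\,y\,d^\ast$; for $\varepsilon$ small this element is nonzero, lies in $J$ (since $y\in J$), and lies in $C^\ast(c)\subset A$. Hence $0\neq(c-2\varepsilon)_+\in J\cap A\subset J\cap\T^\sigma$, as required.

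The main obstacle is precisely the collapsing identity and the care needed to make it rigorous: one must verify that a single compression by a fresh creation operator $T_{m_0}$ simultaneously kills every part of nonzero gauge degree \emph{and} every higher Fock-level part of degree zero, leaving only $\rho_{m_0}$ applied to the coefficient algebra. Condition ($\diamondsuit1$) is what guarantees a fresh coordinate on which the norm is essentially preserved, while condition ($\diamondsuit2$) enters only at the end, packaged through Lemma~\ref{Lem:giid}, to propagate $J\cap A\neq\{0\}$ up to $A\subset J$. Relying on faithfulness of the gauge conditional expectation alone would only show that $\T$ has no nontrivial \emph{gauge-invariant} ideal; it is the coordinate-collapse step that upgrades this to full simplicity.
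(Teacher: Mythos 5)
Your proof is correct, and while it shares the paper's overall skeleton (produce a nonzero element of the ideal lying in $A$, then finish with Lemma \ref{Lem:giid} and non-degeneracy of $A\subset\T$), the middle of your argument runs along a genuinely different route. The paper seeds its argument with the \emph{gauge} expectation: it applies Lemma \ref{Lem:giid} a first time to (the closure of) the algebraic ideal $E^\sigma(I)$ to get density in $\T^\sigma$, then uses $\Ped(A)\subset\Ped(\T^\sigma)$ to find $x\in I$ with $E^\sigma(x)\in A$ and $\|E^\sigma(x)\|=1$; you instead seed with the \emph{vacuum} expectation $E_A$, using faithfulness of the Fock representation to find a compression $x'=T_\n^\ast x T_\n\in J_+$ with $E_A(x')\neq 0$, which avoids both the first use of Lemma \ref{Lem:giid} and the Pedersen-ideal argument. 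The collapsing steps also differ: the paper compresses by a multi-index $\n\in\IN^{N+1}$ with \emph{pairwise distinct} entries, which kills only the components of nonzero gauge degree (up to degree $N$) --- enough because the degree-zero part of its approximant was already arranged to lie in $A$ --- whereas you compress by a single \emph{fresh} coordinate $m_0>R$, which kills every non-vacuum monomial of the approximant at once, including the degree-zero, higher Fock-level ones. Your trick is cleaner but relies essentially on the infinite-rank structure of $\E=\ell^2(\IN)\otimes A$ (there is always an unused coordinate); the paper's distinct-entries device is the one that survives, after modification, in the finite-rank setting of Theorem \ref{Thm:Csimple}, where no fresh letters are available. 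Finally, you extract a nonzero element of $J\cap A$ via R{\o}rdam's perturbation lemma $(c-2\varepsilon)_+=dyd^\ast$, while the paper observes that its compressed element witnesses failure of isometry of the quotient map $A\rightarrow A/(A\cap I)$, which already forces $A\cap I\neq\{0\}$ because injective $\ast$-homomorphisms are isometric; both devices are standard, and in both proofs condition $(\diamondsuit 1)$ plays the identical role of keeping the norm bounded below under the endomorphisms $\rho_\n$, while $(\diamondsuit 2)$ enters only through Lemma \ref{Lem:giid}.
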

\begin{proof}
For $n\in \IZ$, set
\[V_n:=\spa\left\{T_\xi T_\mu^\ast : \xi\in \mathcal{E}_{\fr}^{\otimes k}, \mu \in \mathcal{E}_{\fr}^{\otimes l}:k, l \in \IN, k-l = n\right\} \subset \T.\]
Note that $A \subset V_0$.
For $n\in \IZ$ and $m \geq |n|$, set
\[V_{n, \leq m}:=\spa\left\{T_\xi T_\mu^\ast : \xi\in \mathcal{E}_{\fr}^{\otimes k}, \mu \in \mathcal{E}_{\fr}^{\otimes l}:0 \leq k, l \leq m, k-l = n\right\} \subset V_n.\]
Clearly $V_{n, \leq m} \subset V_{n, \leq m+1}$ for $m\geq |n|$,
and $\bigcup_{m= |n|}^\infty V_{n, \leq m}=V_n$ for each $n \in \IN$.
It is easy to see that
$\spa\{V_n : n \in \IZ\}$ is a dense $\ast$-subalgebra of $\T$.

Now let $I \subset \T$ be a nonzero ideal of $\T$.
Observe that $E^\sigma(I)$ is a nonzero algebraic ideal of $\T^\sigma$.
By the definition of $E^\sigma$, one has
\[T_\n^\ast\cdot E^\sigma(I) \cdot T_\n=E^\sigma(T_\n^\ast\cdot I\cdot T_\n) \subset E^\sigma(I)\] for all $\n \in \fN$.
Thus, by Lemma \ref{Lem:giid}, $E^\sigma(I)$ is dense in $\T^\sigma$.
Since $\Ped(A) \subset \Ped(\T^\sigma)$,
this implies $A \cap E^\sigma(I) \neq \{0\}$.

Take $x\in I$ satisfying
\[E^\sigma(x)\in A,\quad \|E^\sigma(x)\|=1.\] 
Choose $ x_n \in V_{n, \leq N}$; $N\in \IN$, $n=-N, -N+1, \ldots, N$,
satisfying
\[x \approx_{1/2} \sum_{n=-N}^N x_n, \quad x_0=E^\sigma(x).\]
By condition ($\diamondsuit1$) of $\fr$, one can find $\n=(i_1, \ldots, i_{N+1})\in \IN^{N+1}$
whose entries are pairwise distinct and satisfies $\|\rho_\n(x_0)\|>1/2$.
By the first condition of $\n$, we have
\[T_\n^\ast \cdot V_{n, \leq N} \cdot T_\n = \{0 \} \quad{\rm~for~all~} n\in \IZ {\rm~with~} 1 \leq |n|\leq N.\]
This implies
\[I \ni T_\n^\ast x T_\n \approx_{1/2} \sum_{n=-N}^N T_\n^\ast x_n T_\n = T_\n ^\ast x_0 T_\n =\rho_\n(x_0) \in A \subset \T^\sigma.\]
Since the norm of $\rho_\n(x_0)$ is greater than $1/2$,
this shows that the quotient map $A \rightarrow A/(A\cap I) \subset \T/I$ is not isometric (at $\rho_\n(x_0)$).
Hence $I \cap \T^\sigma \neq \{0\}$.
This together with Lemma \ref{Lem:giid} yields
\[I= \T.\]
This proves the simplicity of $\T$.
\end{proof}
We next study the simplicity of $\T \rtimes_{\gamma_\fx} \IR$.
Denote by $D(\IT^\IN)$ the subgroup of $\IT^\IN$ consisting of all constant sequences.
Let $K_\fx$ be the closure of $\{(e^{i x_n t})_{n\in \IN}: t\in \mathbb{R}\}$
 in the compact group $\IT^\IN$.
\begin{Thm}\label{Thm:Rsimple}
Assume that the pair $(\fr, \fx)$ satisfies the following two conditions.
\begin{enumerate}[\upshape(1)]
\item $D(\IT^\IN) \subset K_\fx$.
\item For any nonzero ideal $I$ of $A$, there is a nonzero ideal $J$ of $A$ satisfying the following condition:
The set $\{x_\n:\n\in \fN, J \subset \cI(\rho_\n(I); A)\}$ is dense in $\IR$.
\end{enumerate}
Then the reduced crossed product $\T \rtimes_{\gamma_\fx} \IR$ is simple.
\end{Thm}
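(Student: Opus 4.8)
The plan is to deduce simplicity of $\T\rtimes_{\gamma_\fx}\IR$ from a computation of the Connes spectrum of the flow $\gamma_\fx$. Since $\T$ is simple by Theorem \ref{Thm:Tsimple}, it is in particular $\gamma_\fx$-simple, so by the well-known criterion that the crossed product of a $\gamma_\fx$-simple \Cs-algebra by a flow is simple exactly when the Connes spectrum $\Gamma(\gamma_\fx)$ is full (the criterion exploited by Kishimoto and Kumjian \cite{KK}), it suffices to prove $\Gamma(\gamma_\fx)=\IR$. As $\Gamma(\gamma_\fx)$ is a closed subgroup of $\hat{\IR}=\IR$, and $\Gamma(\gamma_\fx)=\bigcap_B \mathrm{Sp}(\gamma_\fx|_B)$ with $B$ ranging over the nonzero $\gamma_\fx$-invariant hereditary \Cs-subalgebras of $\T$, I would reduce the whole statement to the following assertion: for every such $B$ one has $\mathrm{Sp}(\gamma_\fx|_B)=\IR$.

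First I would use condition (1) to pass to the gauge-invariant picture. The point is that if $(e^{ix_n t_j})_{n}$ converges in $\IT^\IN$ to a constant sequence $(\lambda)_{n}$, then $\gamma_{\fx,t_j}$ converges in $\Aut(\T)$ to the gauge automorphism $\sigma_\lambda$; hence $D(\IT^\IN)\subset K_\fx$ forces the gauge circle $\sigma(\IT)$ to lie in the point-norm closure $K_{\gamma_\fx}$ of $\gamma_\fx(\IR)$. Consequently every $\gamma_\fx$-invariant closed subalgebra is automatically gauge-invariant and $\T^{\gamma_\fx}\subset\T^\sigma$. Fixing a nonzero $\gamma_\fx$-invariant hereditary $B$, averaging a nonzero positive element of $B$ over the compact group $K_{\gamma_\fx}$ produces $0\neq b\in B_+\cap\T^{\gamma_\fx}\subset B_+\cap\T^\sigma$. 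Using the filtration $\T^\sigma=\overline{\bigcup_n B_{\leq n}}$ together with Lemmas \ref{Lem:nondeg} and \ref{Lem:giid} and the isometries $T_\m$ (with pairwise distinct, high-degree entries, exactly as in the proof of Theorem \ref{Thm:Tsimple}, so that the higher-degree cross terms of $b^2$ are annihilated by $T_\m^\ast(\cdot)T_\m$), I would extract from $b$ a nonzero ideal $I$ of $A$ recording the footprint of $B$ on the coefficient algebra.

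Next I would manufacture spectral elements. Because $b\in\T^{\gamma_\fx}$ has Arveson spectrum $\{0\}$ and $T_{\delta_\n\otimes a}=T_\n a$ satisfies $\gamma_{\fx,t}(T_\n a)=e^{ix_\n t}T_\n a$, every element $z:=b\,(T_\n a)\,b$ lies in $B$ (as $\overline{b\T b}\subset B$ for the hereditary algebra $B$) and has Arveson spectrum contained in $\{x_\n\}$; thus such a $z$ realizes the single frequency $x_\n$ whenever $z\neq 0$. Applying condition (2) to the footprint ideal $I$, I obtain a nonzero ideal $J$ of $A$ for which $S:=\{x_\n:\n\in\fN,\ J\subset\cI(\rho_\n(I);A)\}$ is dense in $\IR$. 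The generation hypothesis $J\subset\cI(\rho_\n(I);A)$ is precisely what I would use to guarantee that, for $\n\in S$, one can choose $a\in A$ with $z=b(T_\n a)b\neq 0$: intuitively, $\rho_\n(\cdot)=T_\n^\ast(\cdot)T_\n$ carries the footprint $I$ onto a set generating the nonzero ideal $J$, so the creation operator $T_\n$ reconnects $I$ with $b$ rather than annihilating it. This yields $S\subset\mathrm{Sp}(\gamma_\fx|_B)$; since $S$ is dense and the Arveson spectrum is closed, $\mathrm{Sp}(\gamma_\fx|_B)=\IR$, and as $B$ was arbitrary, $\Gamma(\gamma_\fx)=\IR$.

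I expect the nonvanishing/reconnection step to be the main obstacle. In the Cuntz-algebra setting of Kishimoto and Kumjian the coefficient algebra is $\IC$ and the canonical isometries act trivially on its one-point primitive ideal space, so mere density of the eigenvalues $x_\n$ suffices; here $\rho_\n$, equivalently $T_\n^\ast(\cdot)T_\n$, genuinely moves ideals of $A$ around, and it is exactly this phenomenon that condition (2) is designed to control. Making the reconnection precise --- matching the Morita picture $B_n\cong\IK_A(\E^{\otimes n})$ with the footprint ideal $I$, propagating the generation condition $J\subset\cI(\rho_\n(I);A)$ into an honest lower bound for $\|b(T_\n a)b\|$, and keeping the higher-degree error terms under control while simultaneously demanding $x_\n\in S$ --- is the technical heart of the argument.
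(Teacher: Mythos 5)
Your high-level strategy is legitimate and genuinely different from the paper's. The paper never computes a Connes spectrum: it uses condition (1) to make the induced circle action on $\T\rtimes_{\gamma_\fx}\IR$ approximately inner, so that every ideal is invariant under averaging over the gauge circle, and then runs a purely ideal-theoretic filtration argument (Lemmas \ref{Lem:idT} and \ref{Lem:idBn}) down to $A\rtimes_{\gamma_\fx}\IR\cong A\otimes\Cso(\IR)$, where condition (2) together with the translation flow on $\Cso(\IR)$ places some $J\rtimes_{\gamma_\fx}\IR$ inside any given ideal. Your opening moves are correct: the Olesen--Pedersen criterion is available for flows (it is what \cite{KK} use), condition (1) does force $\sigma(\IT)$ into the point-norm closure of $\gamma_\fx(\IR)$, so a nonzero $\gamma_\fx$-invariant hereditary subalgebra $B$ contains a nonzero $b\in B_+\cap\T^{\gamma_\fx}\subset\T^\sigma$, and elements $b(T_\n a)b$ are indeed eigen-elements of frequency $x_\n$ lying in $B$.

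The gap is exactly the step you defer, and as sketched it would fail, for a concrete logical reason. Since $z=b(T_\n a)b$ satisfies $z^\ast z=ba^\ast(T_\n^\ast b^2T_\n)ab$, one has $z\neq0$ for some $a\in A$ if and only if $(T_\n^\ast b^2T_\n)\,A\,b\neq\{0\}$: the compressed element must \emph{return} to $b$. Condition (2) cannot certify this. Applied to your footprint ideal $I$, it produces a nonzero ideal $J$ with $J\subset\cI(\rho_\n(I);A)$ for a dense set of $x_\n$, but $J$ carries no stated relation to $I$ or to $b$; as $A$ is not assumed prime, one can perfectly well have $J\cap I=\{0\}$ and even $Jb=0$ (e.g.\ $b=T_0a_0a_0^\ast T_0^\ast$ with $\rho_0(J)a_0=0$), in which case knowing $\cI(\rho_\n(I);A)\supset J$ says nothing about $b(T_\n a)b$. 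This ``return-trip'' problem is precisely what the paper's ideal-theoretic route avoids: ideals are stable under $T_\n^\ast(\cdot)T_\n$, hereditary subalgebras are not. The repair inside your approach is to use two-sided bridges $bT_\n aT_{\n'}^\ast b$ with \emph{both} $x_\n,x_{\n'}$ drawn from the dense set $D$ supplied by condition (2): then $\cI(\rho_\n(I);A)\cap\cI(\rho_{\n'}(I);A)\supset J\neq\{0\}$ yields non-vanishing for a suitable $a$, the realized frequencies contain $D-D$, and density survives. Your one-sided bridge cannot be salvaged by condition (2) alone.

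A second, subordinate problem is the footprint extraction itself. The distinct-entry trick you import from Theorem \ref{Thm:Tsimple} is vacuous here, since $b^2$ is already gauge-invariant and has no cross terms to annihilate; the compressions $T_\m^\ast b^2T_\m$ lie in $\T^\sigma$, not in $A$; and, more seriously, \emph{all} of them can have norm far below the error of any approximation $b\approx b'\in B_{\leq N}$ (take $b$ near a rank-one $\theta_{\xi,\xi}$ with $\xi$ spread over $K$ basis vectors: every diagonal block has norm about $1/K$). To attach a robust nonzero element of $A_+$ to $b$ one needs more care: a flow-invariant approximant $b'\in (B_{\leq N})^{\gamma_\fx}$, a near-norming vector $\zeta\in\E^{\otimes N}$ supported in a single frequency block of $\IN^N$ (so that $T_\zeta$ is an honest eigen-element), and a R{\o}rdam-type argument converting the estimate $\|T_\zeta^\ast b^2T_\zeta-a_0\|<\epsilon$, $a_0\in A_+$, into an exact relation $d=r(T_\zeta^\ast b^2T_\zeta)r^\ast\in A_+$ with $r$ flow-fixed, so that the approximation error cannot swamp the signal. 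With these ingredients plus the two-sided bridges the Connes-spectrum route can be completed, but none of them appears in the proposal, and what is proposed in their place does not work.
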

\begin{Exm}\label{Exm:dense}
If the sequence $\fx=(x_n)_{n\in \IN}\subset \IR$ is linearly independent over $\mathbb{Q}$,
then $K_\fx=\IT^\IN$.
If we further assume that $I \subset \cI(\rho_i(I); A)$ for any ideal $I$ of $A$ and $i=0, 1$, and that $x_0 x_1 <0$,
then for any nonzero ideal $I$ of $A$,
the set 
\[\{x_\n :\n\in \fN, I \subset \cI(\rho_\n(I); A)\}\]
 contains the subsemigroup of $\IR$ generated by $x_0$ and $x_1$, which is dense in $\IR$.
Thus the pair $(\fr, \fx)$ satisfies the assumptions of Theorem \ref{Thm:Rsimple}.
\end{Exm}

To prove Theorem \ref{Thm:Rsimple}, we first study ideals
of the reduced crossed products of some subsystems of $\gamma_\fx$.
\begin{Lem}\label{Lem:idT}
For any nonzero ideal $I$ of $\T^\sigma \rtimes_{\gamma_\fx} \IR$
satisfying $T_\n^\ast \cdot I \cdot T_\n \subset I$ for all $\n \in \fN$,
one has $I \cap (A \rtimes_{\gamma_\fx} \IR) \neq \{0\}$.
\end{Lem}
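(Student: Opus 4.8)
The plan is to run the proof of the simplicity of $\T$ (Theorem \ref{Thm:Tsimple}) inside the crossed product, using the gauge filtration $(B_{\leq n})_{n\in\IN}$ of $\T^\sigma$ and then compressing down to $A\rtimes_{\gamma_\fx}\IR$ by the isometries $T_\n$. Observe first that $\gamma_\fx$ preserves the grading, so each $B_{\leq n}$ is a $\gamma_\fx$-invariant \Cs-subalgebra containing $B_n$ as a $\gamma_\fx$-invariant ideal. Moreover $\gamma_{\fx,t}(T_\n)=e^{ix_\n t}T_\n$, so each $\ad(T_\n^\ast)\colon x\mapsto T_\n^\ast xT_\n$ and each $\ad(T_\n)$ on $\T^\sigma$ are $\gamma_\fx$-equivariant and therefore extend to $\T^\sigma\rtimes_{\gamma_\fx}\IR$; by hypothesis $\ad(T_\n^\ast)$ preserves $I$.

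\emph{Reduction to a finite level.} Since $\bigcup_n B_{\leq n}$ is dense in $\T^\sigma$ and $\gamma_\fx$-invariant, the increasing union $\bigcup_n(B_{\leq n}\rtimes_{\gamma_\fx}\IR)$ is dense in $\T^\sigma\rtimes_{\gamma_\fx}\IR$. If $(B_{\leq n}\rtimes_{\gamma_\fx}\IR)\cap I=\{0\}$ for all $n$, then the quotient map modulo $I$ is injective, hence isometric, on each $B_{\leq n}\rtimes_{\gamma_\fx}\IR$, so on the dense union, and thus on all of $\T^\sigma\rtimes_{\gamma_\fx}\IR$, contradicting $I\neq\{0\}$. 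Hence there is $n_0\in\IN$ with $(B_{\leq n_0}\rtimes_{\gamma_\fx}\IR)\cap I\neq\{0\}$. Next I descend to the top degree: by Lemma \ref{Lem:nondeg}, $B_{n_0}$ is an essential ideal of $B_{\leq n_0}$; equivalently $\pi_{n_0}$ embeds $B_{\leq n_0}$ into $\IB_A(\E^{\otimes n_0})=\cM(B_{n_0})$, intertwining $\gamma_\fx$ with $\ad(u_\fx^{\otimes n_0})$. Granting that reduced crossed products by the amenable group $\IR$ preserve this essentiality, $B_{n_0}\rtimes_{\gamma_\fx}\IR$ is an essential ideal of $B_{\leq n_0}\rtimes_{\gamma_\fx}\IR$, so the nonzero ideal $(B_{\leq n_0}\rtimes_{\gamma_\fx}\IR)\cap I$ meets it, giving $(B_{n_0}\rtimes_{\gamma_\fx}\IR)\cap I\neq\{0\}$.

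\emph{Compression into $A\rtimes_{\gamma_\fx}\IR$.} Put $p_\n:=T_\n T_\n^\ast\in\cM(B_{n_0})$ for $\n\in\IN^{n_0}$; under $\Phi_{n_0}$ these are the diagonal projections $\theta_{\delta_\n,\delta_\n}$, and $\sum_{\n\in\IN^{n_0}}p_\n=1$ strictly. For $z\geq 0$ in $B_{n_0}\rtimes_{\gamma_\fx}\IR$, if $p_\n zp_\n=0$ for all $\n$ then $\|z^{1/2}p_\n\|^2=\|p_\n zp_\n\|=0$, so $z^{1/2}p_\n=0$, whence $zp_\n=0$ and $z=z\big(\sum_\n p_\n\big)=0$. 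Choosing $0\neq y\in(B_{n_0}\rtimes_{\gamma_\fx}\IR)\cap I$ and setting $z:=y^\ast y$, there is thus $\n$ with $p_\n zp_\n\neq0$, equivalently $T_\n^\ast zT_\n\neq0$ (since $\ad(T_\n^\ast)\circ\ad(T_\n)=\id$). Finally, the relation $aT_\n=T_\n\rho_\n(a)$ gives by a direct computation $\ad(T_\n^\ast)(B_{n_0})\subset A$, so by equivariance $\ad(T_\n^\ast)$ maps $B_{n_0}\rtimes_{\gamma_\fx}\IR$ into $A\rtimes_{\gamma_\fx}\IR$ while preserving $I$. Therefore $T_\n^\ast zT_\n$ is a nonzero element of $(A\rtimes_{\gamma_\fx}\IR)\cap I$, as required.

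\textbf{Main obstacle.} The only delicate point is the preservation of essentiality in the descent step: that $B_{n_0}\rtimes_{\gamma_\fx}\IR$ is essential in $B_{\leq n_0}\rtimes_{\gamma_\fx}\IR$. I expect to deduce this from the equivariant embedding $B_{\leq n_0}\hookrightarrow\cM(B_{n_0})$ afforded by Lemma \ref{Lem:nondeg}: since $\IR$ is amenable, the invariant ideal $B_{n_0}$ yields the exact sequence $0\to B_{n_0}\rtimes_{\gamma_\fx}\IR\to B_{\leq n_0}\rtimes_{\gamma_\fx}\IR\to B_{\leq n_0-1}\rtimes_{\gamma_\fx}\IR\to 0$, and the equivariant nondegenerate embedding into $\cM(B_{n_0})$ descends, by functoriality of the reduced crossed product, to an embedding $B_{\leq n_0}\rtimes_{\gamma_\fx}\IR\hookrightarrow\cM(B_{n_0}\rtimes_{\gamma_\fx}\IR)$ realizing the multiplier action, which is exactly the essentiality sought.
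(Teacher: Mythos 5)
Your proof is correct, and its skeleton --- reduce by density to some $B_{\leq n_0}\rtimes_{\gamma_\fx}\IR$, descend to the top-degree ideal $B_{n_0}\rtimes_{\gamma_\fx}\IR$ by essentiality, then compress by an isometry $T_\n$ into $A\rtimes_{\gamma_\fx}\IR$ --- is exactly the paper's; but the final extraction step is genuinely different. The paper observes that $\gamma_\fx$ is \emph{inner} on $B_n$ (implemented by $\Phi^{-1}\circ u_\fx^{\otimes n}$), trivializes the crossed product via an isomorphism $\Psi\colon B_n\rtimes_{\gamma_\fx}\IR\cong\IK_A(\E^{\otimes n})\otimes\Cso(\IR)$, and invokes the ideal structure of minimal tensor products (Corollary 9.4.6 of \cite{BO}) to place an elementary tensor $J\otimes K$ of nonzero ideals inside $\Psi(I\cap(B_n\rtimes_{\gamma_\fx}\IR))$; compressing then yields the concrete subset $J_0\cdot K\subset I\cap(A\rtimes_{\gamma_\fx}\IR)$. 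You avoid both the trivialization and the tensor-ideal lemma: since the projections $p_\n=T_\n T_\n^\ast$ are $\gamma_\fx$-invariant and their finite sums converge strictly to $1$ in $\cM(B_{n_0})$, hence in $\cM(B_{n_0}\rtimes_{\gamma_\fx}\IR)$ (by Corollary 5.7 of \cite{Lan}, the inclusion $B_{n_0}\subset \cM(B_{n_0}\rtimes_{\gamma_\fx}\IR)$ being non-degenerate --- a point you should make explicit), a nonzero positive $z$ must satisfy $p_\n z p_\n\neq 0$ for some $\n$. This is more elementary, and the weaker output (mere non-vanishing of $I\cap(A\rtimes_{\gamma_\fx}\IR)$, rather than an explicit product of ideals) is all that is needed, since Lemma \ref{Lem:idBn} performs its own tensor-ideal extraction inside $A\rtimes_{\gamma_\fx}\IR\cong A\otimes\Cso(\IR)$ anyway.

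Two further remarks. First, your ``main obstacle'' is resolved exactly as you anticipate, and exactly as in the paper: Lemma \ref{Lem:nondeg} gives a faithful representation of $B_{\leq n_0}$ on $\E^{\otimes n_0}$ in which the ideal $B_{n_0}\cong\IK_A(\E^{\otimes n_0})$ acts non-degenerately, and the induced regular representation of $B_{\leq n_0}\rtimes_{\gamma_\fx}\IR$ on $L^2(\IR)\otimes\E^{\otimes n_0}$ is faithful while its restriction to $B_{n_0}\rtimes_{\gamma_\fx}\IR$ is non-degenerate; this is precisely the essentiality you need (amenability of $\IR$ is not required here, as the regular representation induced from a faithful representation is always faithful for the \emph{reduced} crossed product). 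Second, be careful with the phrase that $\ad(T_\n^\ast)$ ``extends by equivariance'': conjugation by the multiplier $T_\n\in\cM(\T\rtimes_{\gamma_\fx}\IR)$ acts on $f\in C_c(\IR, B_{n_0})$ by $(T_\n^\ast\cdot f\cdot T_\n)(t)=e^{ix_\n t}\,T_\n^\ast f(t)T_\n$, so it agrees with the pointwise extension of the equivariant map $\ad(T_\n^\ast)$ only up to a twist by the character $t\mapsto e^{ix_\n t}$ (a dual-action automorphism, which need not preserve $I$). The hypothesis of the lemma is about multiplier conjugation, so that interpretation must be used throughout; fortunately the twist preserves $C_c(\IR, A)$, so your conclusion that $T_\n^\ast zT_\n$ is a nonzero element of $I\cap(A\rtimes_{\gamma_\fx}\IR)$ stands.
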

\begin{proof}
We first show that $I$ has a nonzero intersection with $B_n \rtimes_{\gamma_\fx} \IR$ for some $n\in \IN$.
Since $\T^\sigma \rtimes_{\gamma_\fx} \IR$ is the closure of the increasing union
$\bigcup_{n\in \IN} (B_{\leq n} \rtimes_{\gamma_\fx} \IR)$,
one can find $n\in \IN$ satisfying
\[(B_{\leq n} \rtimes_{\gamma_\fx} \IR) \cap I \neq \{0\}.\]
By Lemma 4.1, the canonical representation $B_{\leq n} \subset \IB_A(\E^{\otimes n})$
gives rise to the faithful regular representation $B_{\leq n} \rtimes_{\gamma_\fx} \IR \subset \IB_A(L^2(\IR)\otimes \E^{\otimes n})$.
Since this representation is non-degenerate on $B_n \rtimes_{\gamma_\fx} \IR$,
we obtain
\[I \cap (B_n \rtimes_{\gamma_\fx} \IR) \neq \{0\}.\]

Let $\Phi \colon B_n \rightarrow \IK_A(\E^{\otimes n})$ denote the canonical $\ast$-isomorphism.
Observe that the flow $\gamma_\fx \colon \IR \acts B_n$ is inner:
it is given by the adjoint of the unitary representation $\Phi^{-1}\circ u_\fx^{\otimes n} \colon \IR \rightarrow \cM(B_n)$.
Hence we have a $\ast$-isomorphism
\[\Psi \colon B_n \rtimes_{\gamma_\fx} \IR \rightarrow \IK_A(\E^{\otimes n}) \otimes \Cso(\IR)\]
given by
\[\Psi(f)(t):= \Phi(f(t))u_{\fx, t}^{\otimes n} \quad {\rm~for~}f\in C_c(\IR, B_n), t\in \IR.\]
Here we identify $C_c(\IR, \IK_A(\E^{\otimes n}))$ with a $\ast$-subalgebra of $\IK_A(\E^{\otimes n}) \otimes \Cso(\IR)$ in the obvious way.
Direct calculations show that
\[\Psi(T_\xi a T_\eta^\ast)=\theta_{\xi, \eta} \otimes a\quad {\rm~for~}\xi, \eta \in \E^{\otimes n}, a\in \Cso(\IR).\]

We next choose nonzero ideals $J \subset \IK_A(\E^{\otimes n})$ and $K \subset \Cso(\IR)$ satisfying
\[J \otimes K \subset \Psi(I \cap (B_n \rtimes_{\gamma_\fx} \IR)).\]
(For the existence of such $J$, $K$, see e.g., Corollary 9.4.6 of \cite{BO}.)
By the canonical $\ast$-isomorphism $\IK_A(\E^{\otimes n}) \cong \IK(\ell^2(\IN^n)) \otimes A$,
there is a (nonzero) ideal $J_0$ of $A$ satisfying
\[J=\overline{\rm span}\{\theta_{\xi, \eta}: \xi, \eta \in \E^{\otimes n} J_0\}.\]
This shows that $T_\xi a T_\eta^\ast \in I$ for all $\xi, \eta \in \E^{\otimes n} J_0$ and $a\in K$.
We choose any $\n\in \IN^n$.
Then, by the assumption on $I$, we obtain
\[\{0\}\neq J_0\cdot K \subset T_\n^\ast \cdot (\Psi^{-1}(J\otimes K))\cdot T_\n \subset I \cap (A\rtimes_{\gamma_\fx}\IR).\]
\end{proof}

The second condition of Theorem \ref{Thm:Rsimple} is crucial in the next lemma.
\begin{Lem}\label{Lem:idBn}
Assume that the pair $(\fr, \fx)$ satisfies condition $(2)$ of Theorem \ref{Thm:Rsimple}.
Then for any nonzero ideal $I$ of $A \rtimes_{\gamma_\fx} \IR$ satisfying $T_\n^\ast \cdot I \cdot T_\n\subset I$ for all $\n\in \fN$,
there is a nonzero ideal $J \subset A$ satisfying
\[J \rtimes_{\gamma_\fx} \IR \subset I.\]
\end{Lem}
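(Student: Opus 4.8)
The plan is to identify the ambient algebra and transport everything to the level of ideal-valued fibres. Since the quasi-free flow $\gamma_\fx$ fixes the coefficient algebra $A \subset \T$ pointwise, its restriction to $A$ is trivial, so that $A \rtimes_{\gamma_\fx} \IR \cong A \otimes \Cso(\IR)$, and via the Fourier transform $\Cso(\IR) \cong C_0(\IR)$ this becomes $C_0(\IR, A)$ (with spectral variable $s \in \IR$). Writing $\lambda_t$ for the canonical unitaries implementing $\gamma_\fx$ and using $a T_\n = T_\n \rho_\n(a)$ together with $\gamma_{\fx, t}(T_\n) = e^{i x_\n t} T_\n$, I compute $T_\n^\ast a T_\n = \rho_\n(a)$ and $T_\n^\ast \lambda_t T_\n = e^{i x_\n t} \lambda_t$; after the Fourier transform this shows that the conjugation $\Theta_\n \colon y \mapsto T_\n^\ast y T_\n$ acts on $C_0(\IR, A)$ by $(\Theta_\n F)(s) = \rho_\n(F(s + x_\n))$, i.e. $\rho_\n$ fibrewise composed with translation by $x_\n$. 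Setting $J_s := \mathrm{ev}_s(I) \subset A$ (a closed ideal of $A$), the hypothesis $T_\n^\ast \cdot I \cdot T_\n \subset I$ then becomes the fibrewise relation
\[\cI(\rho_\n(J_{s + x_\n}); A) \subset J_s \qquad (\n \in \fN,\ s \in \IR).\]

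Next I reduce the statement to a purely fibrewise one. Recall that an ideal of $C_0(\IR, A)$ is determined by its fibres, $I = \{F \in C_0(\IR, A) : F(s) \in J_s \text{ for all } s\}$ (the standard description of ideals of a $C_0(\IR)$-algebra with constant fibre $A$, using that $I$ is automatically a $C_0(\IR)$-submodule). Hence, if I can produce a single nonzero ideal $J \subset A$ with $J \subset J_s$ for all $s \in \IR$, then every $F \in J \otimes C_0(\IR)$ satisfies $F(s) \in J \subset J_s$ and therefore lies in $I$; as the flow is trivial on $J$, this is exactly $J \rtimes_{\gamma_\fx} \IR = J \otimes C_0(\IR) \subset I$, which is the desired conclusion. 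So it suffices to find such a $J$.

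To begin the construction I extract a slab on which the fibres admit a common nonzero lower bound. Applying the tensor-product ideal structure (Corollary 9.4.6 of \cite{BO}, exactly as in the proof of Lemma \ref{Lem:idT}) to the nonzero ideal $I$ of $A \otimes C_0(\IR)$ yields nonzero ideals $I' \subset A$ and $K \subset C_0(\IR)$ with $I' \otimes K \subset I$; here $K = C_0(W)$ for some nonempty open set $W \subset \IR$. Evaluating at $s \in W$ gives $\mathrm{ev}_s(I' \otimes C_0(W)) = I'$, hence $I' \subset J_s$ for every $s \in W$. I now feed $I'$ into condition $(2)$ of Theorem \ref{Thm:Rsimple}: there is a nonzero ideal $J \subset A$ such that $D := \{x_\n : \n \in \fN,\ J \subset \cI(\rho_\n(I'); A)\}$ is dense in $\IR$.

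Finally I run the covering argument, which is the crux. Fix an arbitrary $s \in \IR$. Since $W - s$ is a nonempty open set and $D$ is dense, there is $\n \in \fN$ with $x_\n \in D \cap (W - s)$, so that $s + x_\n \in W$ and $J \subset \cI(\rho_\n(I'); A)$. Combining $I' \subset J_{s + x_\n}$ (from $s + x_\n \in W$) with the fibrewise relation gives
\[J \subset \cI(\rho_\n(I'); A) \subset \cI(\rho_\n(J_{s + x_\n}); A) \subset J_s.\]
As $s$ was arbitrary, $J \subset J_s$ for all $s \in \IR$, and the reduction above finishes the proof. The point requiring care is precisely the passage from a \emph{dense} set of usable $s$ to \emph{all} $s$: this is where the open slab $W$ (which forces $I' \subset J_{s'}$ on a set with nonempty interior) and the density of $D$ (which guarantees $D \cap (W - s) \neq \emptyset$) must be used together, since density of $D$ at a single starting point would only cover a dense set of $s$, not all of $\IR$.
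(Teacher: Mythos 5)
Your proof is correct and follows essentially the same route as the paper's: identify $A\rtimes_{\gamma_\fx}\IR$ with $A\otimes\Cso(\IR)$ using that the flow is trivial on $A$, extract a nonzero tensor-product ideal inside $I$ via Corollary 9.4.6 of \cite{BO}, feed its $A$-factor into condition (2) of Theorem \ref{Thm:Rsimple}, observe that conjugation by $T_\n$ acts as $\rho_\n$ tensored with translation by $x_\n$ after the Fourier transform, and use density of the admissible $x_\n$ to cover all of $\IR$. The only difference is presentational: where you invoke the (standard, but stronger than needed) fact that ideals of $C_0(\IR,A)$ are determined by their fibres $J_s=\mathrm{ev}_s(I)$ and run the covering argument pointwise, the paper stays at the level of ideals, concluding from $J\otimes\varrho_{x_\n}(L)\subset I$ for a dense set of $x_\n$ that $J\otimes\Cso(\IR)\subset I$, since translates of a nonzero ideal of $\Cso(\IR)\cong C_0(\IR)$ by a dense set of reals generate the whole algebra.
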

\begin{proof}
Note that the flow $\gamma_{\fx}$ is trivial on $A$.
We therefore identify $A\rtimes_{\gamma_\fx} \IR$ with $A \otimes \Cso(\IR)$ in the obvious way.

Let $I \subset A \otimes \Cso(\IR)$ be an ideal as in the statement. 
Take nonzero ideals $K \subset A$ and $L \subset \Cso(\IR)$
satisfying $K \otimes L \subset I$.
(For the existence of such $K$, $L$, see e.g., Corollary 9.4.6 of \cite{BO}.)
By applying the present assumption of $(\fr, \fx)$ (condition $(2)$ in Theorem \ref{Thm:Rsimple}) to $K$,
one can choose a nonzero ideal $J$ of $A$ such that the set
\[X:=\left\{x_\n:\n\in \fN, J\subset \cI(\rho_{\n}(K); A)\right\}\]
 is dense in $\IR$.
Observe that for any $\n\in \fN$ with $J\subset \cI(\rho_{\n}(K); A)$,
 direct calculations show that
\[J \otimes \varrho_{x_\n}(L) \subset \cI(T_\n^\ast\cdot (K\otimes L) \cdot T_\n; A\otimes \Cso(\IR)) \subset I.\]
Here
\[\varrho \colon \IR \curvearrowright \Cso(\IR)\]
denotes the flow given by
\[\varrho_s(f)(t):=e^{-ist}f(t),\quad f\in C_c(\IR), s, t\in \IR.\]
It is not hard to check that the Fourier transformation
$\Cso(\IR) \cong C_0(\IR)$ transforms the flow $\varrho$ into the translation flow
\[\mathrm{L}\colon \IR\acts C_0(\IR),\quad (\mathrm{L}_s(f))(t):=f(t-s); f\in C_0(\IR), s, t\in \IR.\]
Since $X$ is dense in $\IR$, this yields
\[\cI(\bigcup_{x\in X} \varrho_x(L); \Cso(\IR))=\Cso(\IR).\]
Consequently the ideal $J$ possesses the desired property.
\end{proof}

\begin{proof}[Proof of Theorem \ref{Thm:Rsimple}]
Let $\tilde{\sigma}\colon\IT \acts \T\rtimes_{\gamma_\fx} \IR$
be the action given by
\[(\tilde{\sigma}_z(f))(t):=\sigma_z(f(t)), \quad f \in C_c(\IR, \T),~ z\in \IT,~ t\in \IR.\]
By condition (1),
for every $z\in \IT$, $\tilde{\sigma}_z$ is approximately inner.
This shows that any ideal $I$ of $\T\rtimes_{\gamma_\fx} \IR$ is $\tilde{\sigma}$-invariant
and hence satisfies $E^{\tilde{\sigma}}(I) \subset I$.
Thus, for any nonzero ideal $I\subset \T\rtimes_{\gamma_\fx} \IR$,
the subset $E^{\tilde{\sigma}}(I)$
is a nonzero ideal of $(\T\rtimes_{\gamma_\fx} \IR)^{\tilde{\sigma}}= E^{\tilde{\sigma}}(\T\rtimes_{\gamma_\fx} \IR) =\T^\sigma \rtimes_{\gamma_\fx} \IR$.
By applying Lemmas \ref{Lem:idT} and \ref{Lem:idBn} to $E^{\tilde{\sigma}}(I)$ $(\subset I)$, one can find a nonzero ideal $J \subset A$
 satisfying
\[J \rtimes_{\gamma_\fx} \IR \subset I.\]
Since $\T$ is simple (Theorem \ref{Thm:Tsimple}),
this implies
\[I= \T\rtimes_{\gamma_\fx} \IR.\]
We conclude the simplicity of $\T\rtimes_{\gamma_\fx} \IR$.
(An alternative direct proof: We have
\[\rho_\n(J)\rtimes_{\gamma_\fx} \IR =T_\n^\ast\cdot (J \rtimes_{\gamma_\fx} \IR ) \cdot T_\n \subset I \quad{\rm~for~all~}\n\in \fN,\]
whence $I=\T \rtimes_{\gamma_\fx} \IR$ by condition $(\diamondsuit 2)$ of $\fr$.)
\end{proof}

\begin{Rem}
The above proof shows that condition (2) of Theorem \ref{Thm:Rsimple} can be replaced by the following slightly weaker condition:\\
$(2')$: For any $a\in A \setminus \{0\}$ and any $b\in \Cso(\IR) \setminus\{0\}$, one has
\[\cI(\{\rho_\n(a)\otimes \varrho_{x_\n}(b): \n \in \fN\}; A \otimes \Cso(\IR)) = A \otimes \Cso(\IR).\]
In other words, the semigroup $\{(\rho_\n)_\ast \times {\rm L}_{x_\n}: \n \in \fN \}$
of continuous maps on the topological space $ {\rm Prim}(A) \times \IR$ is \emph{minimal}.
\end{Rem}

\section{Classification of $\gamma_\fx$-KMS weights and tracial weights on $\T$}
In this section we classify all $\gamma_\fx$-KMS weights on $\T$,
under the condition
\[D(\IT^\IN) \subset K_\fx.\]
By Theorem \ref{Thm:KK}, this in particular gives
a useful necessary condition for the existence of a tracial weight on $\T \rtimes_{\gamma_\fx} \IR$.
By using this classification result, we also determine the trace space ${\rm T}(\T)$ for certain $(A, \fr)$.

\begin{Thm}\label{Thm:KMSc}
Assume that $D(\IT^\IN) \subset K_\fx$.
Then for every $\beta\in \IR$, there is a bijective correspondence
between $\gamma_\fx$-$\beta$-KMS weights $\varphi$ on $\T$ and
tracial weights $\tau$ on $A$ satisfying the inequality
\[(\clubsuit\mathchar`-\beta)\quad \sum_{n\in \IN} e^{-\beta x_n}\tau(\rho_n(a))\leq \tau(a) \quad {\rm~for~all~}a\in \Ped(A)_+.\]
$($The latter set, hence both sets, would be empty.$)$
The bijection is given by
\[\varphi \mapsto \tau:= \varphi|_{A_+}.\]
\end{Thm}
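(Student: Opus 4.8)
The plan is to route everything through the gauge grading, extracting $(\clubsuit\text{-}\beta)$ from the KMS condition in the forward direction and using it to rebuild the weight in the backward direction. First I would record the role of the hypothesis $D(\IT^\IN)\subset K_\fx$. The continuous homomorphism $K_\fx\to\Aut(\T)$ sending $(\zeta_n)_n$ to the quasi-free automorphism $T_{\delta_n\otimes a}\mapsto\zeta_nT_{\delta_n\otimes a}$ carries the constant sequences onto the gauge circle $\sigma(\IT)$, so $\sigma(\IT)$ lies in $K_{\gamma_\fx}=\overline{\gamma_\fx(\IR)}\subset\Aut(\T)$. This has two consequences I would use repeatedly. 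A monomial $T_{\delta_\n\otimes a}T_{\delta_\m\otimes b}^\ast$ is $\gamma_\fx$-fixed iff $x_\n=x_\m$, and under the hypothesis this forces $|\n|=|\m|$: for each $w\in\IT$ a net $t_\lambda$ with $e^{ix_nt_\lambda}\to w$ for all $n$ gives $w^{|\n|}=\lim e^{ix_\n t_\lambda}=\lim e^{ix_\m t_\lambda}=w^{|\m|}$. In particular $\T^{\gamma_\fx}\subset\T^\sigma$. Moreover every $\gamma_\fx$-invariant lower semicontinuous weight is $\sigma$-invariant, so by Proposition~\ref{Prop:KMS} each $\gamma_\fx$-$\beta$-KMS weight $\varphi$ has the form $\tau'\circ E^{\gamma_\fx}$ for a tracial weight $\tau'$ on $\T^{\gamma_\fx}$.

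For the forward map $\varphi\mapsto\tau:=\varphi|_{A_+}$, traciality of $\tau$ is immediate: $A\subset\T^{\gamma_\fx}$ and $\gamma_{\fx,i\beta}$ fixes $A$ pointwise, so $\varphi(ab)=\varphi(b\gamma_{\fx,i\beta}(a))=\varphi(ba)$ for $a,b\in A$; properness follows from lower semicontinuity. The computational heart is that each $T_{\delta_\n\otimes a}$ is $\gamma_\fx$-entire with $\gamma_{\fx,z}(T_{\delta_\n\otimes a})=e^{ix_\n z}T_{\delta_\n\otimes a}$, whence the KMS condition yields, for $a,b\in\Ped(A)$,
\[\varphi(T_{\delta_\n\otimes a}T_{\delta_\m\otimes b}^\ast)=e^{-\beta x_\n}\varphi(\ip{\delta_\m\otimes b,\delta_\n\otimes a})=\delta_{\n,\m}\,e^{-\beta x_\n}\,\tau(b^\ast a).\]
Taking $c=\rho_n(a^{1/2})$ and using $a^{1/2}T_nT_n^\ast a^{1/2}=T_n\rho_n(a)T_n^\ast=T_{\delta_n\otimes c}T_{\delta_n\otimes c}^\ast$, this gives $\varphi(T_n\rho_n(a)T_n^\ast)=e^{-\beta x_n}\tau(\rho_n(a))$, so for $a\in\Ped(A)_+$ and any finite $F\subset\IN$,
\[\sum_{n\in F}e^{-\beta x_n}\tau(\rho_n(a))=\varphi\Big(a^{1/2}\big(\sum_{n\in F}T_nT_n^\ast\big)a^{1/2}\Big)\le\varphi(a)=\tau(a),\]
since $\sum_{n\in F}T_nT_n^\ast\le1$ and $\varphi$ is monotone. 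Letting $F\uparrow\IN$ gives $(\clubsuit\text{-}\beta)$. The displayed formula, together with the first paragraph (gauge-invariance plus $\varphi=\tau'\circ E^{\gamma_\fx}$) and the uniqueness clause of Proposition~\ref{Prop:trace}, shows that $\varphi$ is determined by $\tau$, so the map is injective.

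For surjectivity I would run Theorem~\ref{Thm:KMS} with the almost periodic flow $\gamma_\fx$ on $\T$ and the core
\[A_0:=\spa\big\{T_{\delta_\n\otimes a}T_{\delta_\m\otimes b}^\ast:\n,\m\in\fN,\ a,b\in\Ped(A)\big\}.\]
Using $T_{\delta_\m\otimes b}^\ast T_{\delta_{\n'}\otimes a'}=\ip{\delta_\m\otimes b,\delta_{\n'}\otimes a'}\in A$ (or a creation/annihilation term) and $aT_\n=T_\n\rho_\n(a)$, one checks $A_0$ is a $\gamma_\fx$-invariant dense $\ast$-subalgebra of $\mathcal{A}_{\gamma_\fx}$ with $\gamma_{\fx,-i\beta/2}(A_0)=A_0$ and $R_m(A_0)\subset A_0$ (the flow, its analytic continuation, and the $R_m$ all act on generators by scalars), and an approximate unit as required can be taken inside $\Ped(A)\subset A_0$. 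Given $\tau$ satisfying $(\clubsuit\text{-}\beta)$, I define a functional on $E^{\gamma_\fx}(A_0)$ (spanned by the generators with $x_\n=x_\m$, hence graded by length) by the weighted-trace formula above, i.e. on $B_k\cong\IK(\ell^2(\IN^k))\otimes A$ by $\mathrm{Tr}_w\otimes\tau$ with $w(\n)=e^{-\beta x_\n}$. Traciality and the $\gamma_\fx$-$\beta$-KMS condition on $A_0$ are then direct from the formula. The one substantial point is \emph{positivity}: the grading obstructs a componentwise argument, and it must be deduced from the single inequality $(\clubsuit\text{-}\beta)$. The clean way is to realize the functional as a vector weight in the weighted Fock representation of $\T$ on $\bigoplus_{\n\in\fN}e^{-\beta x_\n}H_\tau$ (with $H_\tau$ the GNS space of $\tau$): the creation operators $T_i$ are bounded there precisely because $\sum_ie^{-\beta x_i}\tau(\rho_i(a^\ast a))\le\tau(a^\ast a)$, which is $(\clubsuit\text{-}\beta)$, so the assignment is an honest $\ast$-representation and the functional is positive. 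Proposition~\ref{Prop:trace} then extends it to a tracial weight $\tau'$ on $\T^{\gamma_\fx}$, and Theorem~\ref{Thm:KMS} promotes $\tau'\circ E^{\gamma_\fx}$ to a $\gamma_\fx$-$\beta$-KMS weight $\varphi$ with $\varphi|_A=\tau$; the determination result of the forward direction shows the two assignments are mutually inverse.

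I expect positivity of the candidate functional to be the crux, for exactly the reason above: it is the only step that is not a formal manipulation of the KMS formula, and translating the scalar sub-invariance $(\clubsuit\text{-}\beta)$ into boundedness of the Fock creation operators is where the real content sits. The remaining friction is bookkeeping for Theorem~\ref{Thm:KMS} — identifying $A_0\cap\Ped(\T^{\gamma_\fx})$ with $E^{\gamma_\fx}(A_0)$ and producing the approximate unit inside $\Ped(\T^{\gamma_\fx})\cap A_0$ — which is routine given the explicit core.
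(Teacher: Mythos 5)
Your forward direction reproduces the paper's argument (gauge invariance of $\varphi$ from $D(\IT^\IN)\subset K_\fx$, the formula $\varphi(T_{\delta_\n\otimes a}T_{\delta_\m\otimes b}^\ast)=\delta_{\n,\m}e^{-\beta x_\n}\tau(b^\ast a)$, subinvariance from $\sum_{n\in F}T_nT_n^\ast\le 1$), and your reduction of surjectivity to positivity of the weighted-trace functional on the core $\cS$ is also exactly the paper's plan. The gap is the positivity step itself. First, the ``weighted Fock representation'' on $\bigoplus_{\n\in\fN}e^{-\beta x_\n}H_\tau$ is not a $\ast$-representation of $\T$: if $T_i$ acts as the shift onto the $(i\star\,\cdot\,)$-summands, the weighted inner product makes the adjoint pick up the factor $e^{-\beta x_i}$, so $\pi(T_{\delta_i\otimes a})^\ast\pi(T_{\delta_i\otimes b})=e^{-\beta x_i}\,\pi(a^\ast b)\neq \pi(T_{\delta_i\otimes a}^\ast T_{\delta_i\otimes b})$. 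The only way to repair the Toeplitz relation is to rescale the shifts, which makes the representation unitarily equivalent to the \emph{unweighted} Fock representation of $\T\subset\IB_A(\mathrm{F}(\E))$ localized at $\tau$, i.e.\ on $\mathrm{F}(\E)\otimes_A H_\tau$ --- and there the creation operators are contractions \emph{automatically}, with no appeal to ($\clubsuit$-$\beta$). So the inequality cannot enter the argument where you want it to.

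Second, and fatally, vector weights in that Fock representation cannot realize the candidate functional for every $\tau$ satisfying ($\clubsuit$-$\beta$): they fail precisely when ($\clubsuit$-$\beta$) holds with equality, a case the theorem must cover (see the last sentence of Example \ref{Exm:KMST} with $\sum_n c_n=1$, and the finite-rank Theorem \ref{Thm:KMSc2}, whose proof invokes this one and where equality is forced). Take $A=\IC$, $\rho_n=\id_\IC$, and $\sum_{n\in\IN}e^{-\beta x_n}=1$. Then $\mathrm{F}(\E)=\ell^2(\fN)$, $\T$ is the Cuntz--Toeplitz algebra acting on it, and the candidate functional is the state determined by $\varphi(T_\n T_\m^\ast)=\delta_{\n,\m}e^{-\beta x_\n}$. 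Suppose $\varphi=\sum_j\omega_{\xi_j}$ with $\xi_j\in\ell^2(\fN)$, $\sum_j\|\xi_j\|^2=\varphi(1)=1$. The projections $1-\sum_{n\le N}T_nT_n^\ast$ decrease strongly to the vacuum projection, while
\[\varphi\Bigl(1-\sum_{n\le N}T_nT_n^\ast\Bigr)=1-\sum_{n\le N}e^{-\beta x_n}\longrightarrow 0,\]
so by dominated convergence every $\xi_j$ has vanishing vacuum component; applying the same argument to $T_\fk\bigl(1-\sum_{n\le N}T_nT_n^\ast\bigr)T_\fk^\ast$, whose $\varphi$-value is $e^{-\beta x_\fk}\bigl(1-\sum_{n\le N}e^{-\beta x_n}\bigr)\to 0$, kills the $\delta_\fk$-component of every $\xi_j$ for every $\fk\in\fN$, forcing $\xi_j=0$ --- a contradiction. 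Thus in the equality case the desired KMS state is singular with respect to the Fock representation, and your construction produces nothing (it only works when the residual $\lim_{K}\sum_{|\mathfrak{p}|=K}e^{-\beta x_{\mathfrak p}}\tau(\rho_{\mathfrak p}(a))$ vanishes, e.g.\ $\tau\circ\rho_n=c_n\tau$ with $\sum_n c_ne^{-\beta x_n}<1$). This is exactly why the paper proves positivity by induction along the filtration $\cB_{\le n}=B_{\le n}\cap\cS$: for $a\in(\cB_{\le n+1})_+$ the compressions $T_k^\ast aT_k$ lie in $(\cB_{\le n})_+$, the induction hypothesis gives $\varphi(T_k^\ast aT_k)\ge 0$, and summing these with weights $e^{-\beta x_k}$ and applying ($\clubsuit$-$\beta$) to $E_A(a)=a_{\emptyset,\emptyset}$ yields $\varphi(a)\ge 0$ --- an argument insensitive to whether the inequality is strict. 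You would need to replace your Fock-space argument by this (or an equivalent) inductive compression argument.
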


\begin{Exm}\label{Exm:KMST}
For any sequences $\fx=(x_n)_{n\in \IN} \in (\IR\setminus \{0\})^\IN$, $(c_n)_{n\in \IN} \in (0, \infty)^\IN$, and any
$\beta\in \IR \setminus \{0\}$,
there is a sequence $(r_n)_{n \in \IN} \in (\mathbb{Q}\setminus \{0\})^\IN$ satisfying
\[\sum_{n\in \IN} c_n e^{-\beta r_n x_n} \leq 1.\]
If we further assume that
$c_0 + c_1 <1$,
then one can require $r_0$ and $r_1$ to be positive.
Thus, if $A$ admits a tracial weight $\tau$ and positive constants $c_n$; $n\in \IN$
satisfying
\[c_0 + c_1 <1, \quad\quad \tau \circ \rho_n(a)\leq c_n \tau(a) \quad{\rm~ for~ all~}a\in {\rm Ped}(A)_+, n\in \IN,\]
then for any $\beta\in \IR \setminus \{0\}$, there exists $\fx=(x_n)_{n\in \IN}\in \IR^\IN$ which is linearly independent over $\mathbb{Q}$, $x_0 x_1 <0$,
and satisfies the inequality ($\clubsuit$-$\beta$) in Theorem \ref{Thm:KMSc} for $\tau$.
For such $\fx$, under the additional assumption that $I \subset \cI(\rho_i(I); A)$ for all ideals $I\subset A$ and $i=0, 1$,
the reduced crossed product $\T \rtimes_{\gamma_\fx} \IR$
is simple and stably projectionless by Remark \ref{Rem:spl}, 
the observation in Example \ref{Exm:dense}, and Theorems \ref{Thm:KK}, \ref{Thm:Rsimple}, \ref{Thm:KMSc}.

If $\sum_{n=1}^\infty c_n \leq 1$, then $\tau$ satisfies the inequality $(\clubsuit$-$0$) for any $\fx\in \IR^\IN$.
Hence both $\T$ and $\T\rtimes _{\gamma_\fx} \IR$ admit a tracial weight.
\end{Exm}
\begin{Rem}
In the easiest case that $A=\IC$ and $\fr=(\id_{\IC})_{n\in \IN}$,
the flows
$\gamma_\fx \colon \IR \acts \T$
coincide with the flows on $\mathcal{O}_\infty$ studied in \cite{Kat}.
Proposition 7.4 in \cite{Kat} shows that their crossed products can never be stably finite simple.
This suggests the significance of the trace shrink property of some members in $\fr$ to obtain stably finite simple crossed product \Cs-algebras (see Example \ref{Exm:KMST}).
\end{Rem}

\begin{proof}[Proof of Theorem \ref{Thm:KMSc}]
Fix $\beta\in \IR$. We first show the injectivity of the map in the statement.
Let $\varphi$ be a $\gamma_\fx$-$\beta$-KMS weight on $\T$.
By the lower semi-continuity of $\varphi$ and the assumption $D(\IT^\IN) \subset K_\fx$, it is invariant under the gauge action $\sigma$.
Observe that the elements $T_{\delta_\n \otimes a}$; $\n \in \fN$, $a\in {\rm Ped}(A)$, are contained in $\cM_\varphi^{\gamma_\fx}$,
because $|T_{\delta_\n \otimes a}|=|a| \in \cM_\varphi$ by Proposition \ref{Prop:KMS}.
Then, for any $a, b \in \Ped(A)$ and $\n, \m \in \fN$ with $|\n|\neq |\m|$, by the gauge invariance of $\varphi$,
we have
\begin{equation}
\varphi(T_{\delta_\n\otimes a} T_{\delta_\m \otimes b}^\ast)=0.\label{equation:KMS1}
\end{equation}
Since $\varphi|_{\T^{\gamma_\fx}}$ is a proper tracial weight (by Proposition \ref{Prop:KMS}) and $\Ped(A)\subset \Ped(\T^{\gamma_\fx})$,
the restriction
$\tau:=\varphi|_{A_+}$ is also a proper tracial weight on $A$.
For $a, b\in \Ped(A)$ and $\n, \m\in \fN$ with $|\n|=|\m|$,
the KMS condition implies
\begin{equation}
\varphi(T_{\delta_\n\otimes a} T_{\delta_\m\otimes b}^\ast)=e^{-\beta x_\n}\varphi(T_{\delta_\m \otimes b}^\ast T_{\delta_\n \otimes a})=e^{-\beta x_\n}\delta_{\n, \m} \tau(b^\ast a).\label{equation:KMS2}
\end{equation}

Next we define
a dense $\ast$-subalgebra $\cS\subset \T$ to be
\[\cS:=\spa\{T_{\delta_\n\otimes a} T_{\delta_\m \otimes b}^\ast: \n, \m \in \fN, a, b\in \Ped(A)\}.\]
Since
$\Ped(A)=\spa(\Ped(A) \cdot \Ped(A))$, one has
\[{\rm Ped}(A)\subset \cS=\spa\{ T_\n a T_\m^\ast: \n, \m \in \fN, a\in \Ped(A)\}.\]
It is not hard to check that
\[E^{\gamma_\fx}(\cS)=\spa\{T_\n a T_\m^\ast: \n, \m \in \fN, a \in \Ped(A), x_\n= x_\m\}\subset \cS.\]
Note that $\cS$ satisfies the assumptions of Theorem \ref{Thm:KMS} for $\gamma_\fx$ and $\beta$.
Hence by Theorem \ref{Thm:KMS}, there exists at most one $\gamma_\fx$-$\beta$-KMS weight on $\T$ satisfying the equations (\ref{equation:KMS1}) and (\ref{equation:KMS2}).
Hence the map $\varphi \mapsto \varphi|_{A_+}$
gives an injective map from the space of $\gamma_\fx$-$\beta$-KMS weights on $\T$ to the trace space of $A$.

We next show that, for any $\gamma_\fx$-$\beta$-KMS weight $\varphi$,
the tracial weight $\tau:=\varphi|_{A_+}$ satisfies the inequality ($\clubsuit$-$\beta$).
For any $a\in {\rm Ped}(A)_+$ and any $N\in \IN$, by the definition of the left $A$-action ${\pi_\fr}$, one has
\[\sum_{n=1}^N T_n \rho_n(a) T_{n}^\ast \leq a.\]
Then the positivity of $\varphi$ and the equation (\ref{equation:KMS2}) (applied to ($n, n, \rho_n(a)^{1/2}, \rho_n(a)^{1/2}$) instead of $(\n, \m, a, b)$) yield
\[\sum_{n=1}^N e^{-\beta x_n}\tau(\rho_n(a)) \leq \tau(a).\] 
Letting $N \rightarrow \infty$, we obtain the inequality ($\clubsuit$-$\beta$).

Now, to complete the proof, we only need to show that for any tracial weight $\tau$ on $A$
satisfying the inequality ($\clubsuit$-$\beta$),
there is a $\gamma_\fx$-$\beta$-KMS weight $\varphi$
with $\varphi|_{A_+}=\tau$.

To construct the desired weight, we use the following presentation of elements in $\cS$.
Since the subspaces $T_\n\cdot A\cdot T_\m^\ast; \n, \m\in \fN$, are linearly independent in $\T \subset \IB_A(\mathrm{F}(\cE))$,
for any $a\in \cS$, there is a unique indexed family $(a_{\n, \m})_{\n, \m \in \fN}$ of elements in $\Ped(A)$
whose all but finitely many entries are $0$, and satisfies
\[a=\sum_{\n, \m\in \fN} T_\n a_{\n, \m} T_\m^\ast.\]
Now, we define
a self-adjoint linear functional
\[\varphi \colon \cS \rightarrow \IC\]
by the formula
\[\varphi(a):= \sum_{\n \in \fN}e^{-\beta x_\n}\tau(a_{\n, \n}) \quad {\rm~for~}a=\sum_{\n, \m \in \fN} T_\n a_{\n, \m} T_\m^\ast\in \cS.\]
We next show that $\varphi$ satisfies the $\gamma_\fx$-$\beta$-KMS condition on $\cS$.
To see this, for $\n, \m, \fk, \fl\in \fN$ and $a, b\in \Ped(A)$,
let us compute the two values $\varphi((T_\n a T_\m^\ast )(T_\fk b T_\fl^\ast))$ and $\varphi((T_\fk b T_\fl^\ast)(T_\n a T_\m^\ast))$.
Note that by the definition of $\varphi$ and the relations in $\T$, both values vanish
when $|\n|- |\m|\neq |\fk|-|\fl|$.
In particular the KMS condition holds true in this case.
We next consider the case $|\n|- |\m|= |\fk|-|\fl|$.
By switching the pairs $(\n, \m)$ and $(\fk, \fl)$ if necessary,
we may further assume that $|\n|\geq |\fk|$, (and hence) $|\m|\geq |\fl|$.
Then notice that the first value $\varphi((T_\n a T_\m^\ast )(T_\fk b T_\fl^\ast))$ vanishes
unless there exists $\mathfrak{p}\in \fN$
satisfying
\[\n=\fl\star \mathfrak{p},\quad \m=\fk \star \mathfrak{p}.\] 
The same condition is also a necessary condition to get
$\varphi((T_\fk b T_\fl^\ast)(T_\n a T_\m^\ast)) \neq 0$.
Therefore we only need to check the KMS condition
under the additional assumption that such a $\mathfrak{p}$ exists.
In this case, direct calculations show that
\begin{align*}
\varphi((T_\n a T_\m^\ast )(T_\fk b T_\fl^\ast))
&=\varphi(T_\n a T_{\mathfrak{p}}^\ast b T_\fl^\ast)\\
&=\varphi(T_\n a \rho_{\mathfrak{p}}(b)T_\n^\ast)\\
&=e^{-\beta x_\n}\tau(a\rho_{\mathfrak{p}}(b)),
\end{align*}
\begin{align*}
\varphi((T_\fk b T_\fl^\ast)(T_\n a T_\m^\ast))
&=\varphi(T_\fk b T_{\mathfrak{p}} a T_\m^\ast)\\
&=\varphi(T_\m \rho_{\mathfrak{p}}(b)a T_\m^\ast)\\
&=e^{-\beta x_\m}\tau(a\rho_{\mathfrak{p}}(b)).
\end{align*}
Since $\gamma_{\fx, i\beta} (T_\n aT_\m^\ast )=e^{-\beta (x_\n - x_\m)} T_\n a T_\m^\ast$, 
this proves the KMS condition.

Now, for the completion of the proof, by Theorem \ref{Thm:KMS}, it suffices to show the positivity of $\varphi$.
For $n\in \IN$, set $\cB_{\leq n}:=B_{\leq n} \cap \cS$.
To show the positivity, since $\varphi=\varphi\circ E^{\sigma}|_{\cS}$ and $E^\sigma(\cS) =\bigcup_{n\in \IN} \cB_{\leq n}$,
it suffices to show the positivity of $\varphi|_{\cB_{\leq n}}$ for each $n\in \IN$.
We prove this by induction on $n\in \IN$.
In the case $n=0$, the statement is trivial because
$\varphi|_{\cB_0}=\tau|_{\Ped(A)}$.
Assume that we have shown the positivity of $\varphi |_{\cB_{\leq n}}$ for a given $n \in \IN$.
Take any positive element
\[a=\sum_{\n, \m\in \fN_{\leq n+1}} T_\n a_{\n, \m} T_\m^\ast \in (\cB_{\leq n+1})_+.\]
Then, for each $k\in \IN$,
a direct calculation shows
\[T_k^\ast a T_k=\rho_k(a_{\emptyset, \emptyset})+ \sum_{\n, \m\in \fN_{\leq n}} T_{\n} a_{k\star\n, k\star\m} T_{\m}^\ast\in (\cB_{\leq n})_+.\]
Since $\varphi$ is positive on $\cB_{\leq n}$ (by the induction hypothesis),
we have
\begin{align*}0&\leq \varphi(T_k^\ast a T_k)
= \tau(\rho_k(a_{\emptyset, \emptyset})) + \sum_{\n \in \fN_{\leq n}} e^{-\beta x_\n}\tau(a_{k\star\n, k\star\n}).
\end{align*}
This implies
\begin{align*}0&\leq \sum_{k\in \IN} e^{-\beta x_k}\left( \tau(\rho_k(a_{\emptyset, \emptyset})) + \sum_{\n \in \fN_{\leq n}} e^{-\beta x_\n}\tau(a_{k\star\n, k\star\n})\right)\\
&=\sum_{k\in \IN} e^{-\beta x_k} \tau(\rho_k(a_{\emptyset, \emptyset})) + \sum_{\n \in \fN_{\leq n+1}\setminus\{\emptyset\}} e^{-\beta x_\n}\tau(a_{\n, \n}).
\end{align*}
As $\tau$ satisfies the inequality ($\clubsuit$-$\beta$) and $a_{\emptyset, \emptyset}= E_A(a) \in {\rm Ped}(A)_+$, the right hand side value is not greater than
\[\tau(a_{\emptyset, \emptyset}) + \sum_{\n \in \fN_{\leq n+1}\setminus\{\emptyset\}} e^{-\beta x_\n}\tau(a_{\n, \n})=\varphi(a).\]
This proves the positivity of $\varphi|_{\cB_{\leq n+1}}$.
\end{proof}
\begin{Rem}\label{Rem:gauge}
In fact the proof of Theorem \ref{Thm:KMSc}
gives a classification of the \emph{gauge invariant} $\gamma_{\fx}$-$\beta$-KMS weights
without any assumptions on $\fx$:
We only use the assumption on $\fx$ to conclude that any $\gamma_\fx$-$\beta$-KMS weight
is automatically gauge invariant.
\end{Rem}

If the members of $\fr$ generate a free semigroup consisting of sufficiently outer $\ast$-endomorphisms,
then in fact Theorem \ref{Thm:KMSc} gives a complete description of the trace space of $\T$.
As an example of such a phenomenon, we record the next theorem.
Fix a free basis $(s_n)_{n\in \IN}$ of $\IF_\infty$.
Let $\sigma\colon \IF_\infty \acts \cZ^{\otimes \IF_{\infty}}$
denote the Bernoulli shift action.
\begin{Thm}\label{Thm:Ttrace}
Let $A_0$ be a \Cs-algebra and $\fr_0=(\rho_{0, n})_{n\in \IN}$ be a sequence of non-degenerate $\ast$-endomorphisms on $A_0$
satisfying conditions $(\diamondsuit 1)$ and $(\diamondsuit 2)$.
On the \Cs-algebra $A:=A_0 \otimes \cZ^{\otimes \IF_\infty}$,
consider the sequence ${\fr}=(\rho_n)_{n\in \IN}:=(\rho_{0, n} \otimes \sigma_{s_n})_{n\in \IN}$ of $\ast$-endomorphisms.
Then any tracial weight on $\T$ is gauge invariant.
\end{Thm}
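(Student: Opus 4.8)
The plan is to show that $\tau$ is concentrated on the gauge fixed point algebra; gauge invariance of $\tau$ is equivalent to $\tau=\tau\circ E^{\sigma}$, i.e. to the vanishing of $\tau$ on the dense span of the off-diagonal elements $T_\n a T_\m^\ast$ with $|\n|\neq|\m|$. First I would reduce this to a single claim. For $a\in\Ped(A)$ and $\n,\m\in\fN$, traciality gives $\tau(T_\n a T_\m^\ast)=\tau(a\,T_\m^\ast T_\n)$, and $T_\m^\ast T_\n$ equals $0$ unless one of $\n,\m$ extends the other, in which case it is $T_\mathfrak p$ or $T_\mathfrak p^\ast$ for the complementary multi-index $\mathfrak p$. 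Hence the theorem reduces to
\[\tau(T_\mathfrak p a)=0\qquad\text{for all }\mathfrak p\in\fN\setminus\{\emptyset\}\text{ and all }a\in\Ped(A)_+ .\]

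The mechanism for this vanishing is to produce, inside $\cM(A)$, unitaries conjugating $T_\mathfrak p$ into a nontrivial scalar multiple of itself while almost fixing $a$. For a unitary $u'\in\cZ^{\otimes\IF_\infty}$ put $u:=1_{A_0}\otimes u'\in\cM(A)$. Writing $\mathfrak p=(i_1,\dots,i_{|\mathfrak p|})$ and $w:=s_{i_{|\mathfrak p|}}\cdots s_{i_1}\in\IF_\infty$, the identity $aT_\mathfrak p=T_\mathfrak p\rho_\mathfrak p(a)$ together with $\rho_\mathfrak p=\rho_{0,\mathfrak p}\otimes\sigma_w$ and the non-degeneracy of the $\rho_{0,n}$ (so $\rho_{0,\mathfrak p}(1)=1$) yields
\[uT_\mathfrak p u^\ast=T_\mathfrak p\,\rho_\mathfrak p(u)u^\ast=T_\mathfrak p\,(1\otimes\sigma_w(u')u'^{\ast}).\]
Since $\tau$ is invariant under conjugation by the unitary multiplier $u$, one has $\tau(T_\mathfrak p a)=\tau(uT_\mathfrak p a u^\ast)=\tau\bigl((uT_\mathfrak p u^\ast)(uau^\ast)\bigr)$. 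Thus it suffices to find unitaries $u'$ with $\sigma_w(u')\approx\lambda u'$ for a fixed $\lambda\in\IT\setminus\{1\}$ and with $u'$ almost commuting with the finitely many tensor factors supporting $a$: then $uT_\mathfrak p u^\ast\approx\lambda T_\mathfrak p$ and $uau^\ast\approx a$, and a trace Cauchy--Schwarz estimate (legitimate because $a\in\Ped(A)_+$ keeps everything inside a corner on which $\tau$ is finite, and because $T_\mathfrak p^\ast T_\mathfrak p=1$) upgrades these approximations to the exact identity $\tau(T_\mathfrak p a)=\lambda\,\tau(T_\mathfrak p a)$, forcing $\tau(T_\mathfrak p a)=0$.

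The existence of such approximate eigenunitaries is exactly where the coefficient algebra enters. Because $w$ is a nonempty product of the free generators $s_n$ (with no inverses), it is a nontrivial reduced word, so $w\neq e$ and $w$ has infinite order in $\IF_\infty$; hence $\langle w\rangle\cong\IZ$ acts freely on $\IF_\infty$ by left translation with all orbits infinite, and choosing orbit representatives identifies $\cZ^{\otimes\IF_\infty}$ with $\cZ^{\otimes\IZ}$ (with $\cZ$-absorbing fibres) so that $\sigma_w$ becomes the tensor (Bernoulli) shift. I would then invoke the Rokhlin property of this shift: for each $p\geq2$ it supplies approximately central, approximately orthogonal positive towers $f_0,\dots,f_{p-1}$ with $\sigma_w(f_j)\approx f_{j+1\bmod p}$, and setting $u':=\sum_{j}\zeta^{\,j}f_j$ with $\zeta:=e^{2\pi i/p}$ gives an approximate unitary with $\sigma_w(u')\approx\zeta^{-1}u'$, so $\lambda:=\zeta^{-1}\neq1$ works (already $p=2$, $\lambda=-1$, suffices); approximate centrality is built into the towers, so the same $u'$ almost commutes with $a$.

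The main technical input — the step I expect to require the most care — is precisely this construction, i.e. the Rokhlin property of the Bernoulli shift $\sigma_w$ on $\cZ^{\otimes\IF_\infty}$ (equivalently, its strong outerness together with $\cZ$-stability). The remaining points are routine weight bookkeeping: choosing a local unit $e\in\Ped(A)_+$ for $a$, working inside ${\rm Her}(e;A)$ where $\tau$ is a bounded positive functional, and checking that the unitaries $u$ keep all elements in a fixed corner so that the norm approximations pass through $\tau$. Finally, I would emphasize that the argument genuinely needs $w\neq e$: in the degenerate case $A=\IC$, $\fr=(\id_\IC)$ one has $\sigma_w=\id$, no nontrivial eigenunitary exists, and indeed (as recalled after Theorem \ref{Thm:KMSc}) not all traces are gauge invariant there. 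The role of the Bernoulli shift over $\IF_\infty$ is exactly to guarantee $w(\mathfrak p)\neq e$ for every $\mathfrak p\neq\emptyset$, uniformly supplying the outerness that kills the off-diagonal part.
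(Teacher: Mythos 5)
Your first paragraph (the reduction to $\tau(T_{\mathfrak{p}}a)=0$ for $\mathfrak{p}\in\fN\setminus\{\emptyset\}$) and the conjugation identity $uT_{\mathfrak{p}}u^{\ast}=T_{\mathfrak{p}}\,(1\otimes\sigma_w(u')u'^{\ast})$ are correct, and up to that point you are on the same track as the paper. The fatal gap is exactly the step you flag as the ``main technical input'': the Bernoulli shift $\sigma_w$ on $\cZ^{\otimes\IF_\infty}\cong\cZ$ does \emph{not} have the single-tower norm Rokhlin property you invoke, and, worse, the approximately central approximate eigenunitaries you want to extract from it do not exist at all. For the towers: if $f_0,\dots,f_{p-1}$ are positive contractions, pairwise approximately orthogonal, with $\sum_j f_j\approx 1$, then each $f_j\approx f_j\sum_k f_k\approx f_j^2$ is an approximate projection, hence (functional calculus, spectral gap at $1/2$) norm-close to an honest projection of $\cZ^{\otimes\IF_\infty}$, i.e.\ to $0$ or $1$; for $p\geq 2$ this is incompatible with $\sigma_w(f_j)\approx f_{j+1\bmod p}$. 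This is precisely why Rokhlin dimension zero is impossible for automorphisms of projectionless unital \Cs-algebras, and why shifts on $\cZ^{\otimes\IZ}$ are only known (and only can) have Rokhlin dimension one, with two interlocking towers --- which do not assemble into an eigenunitary. Independently of towers, the eigenunitaries themselves are obstructed by the de la Harpe--Skandalis determinant: since $\cZ^{\otimes\IF_\infty}\cong\cZ$ has stable rank one, $K_1=0$ and unique trace $\tau_{\cZ}$ with $(\tau_{\cZ})_{\ast}K_0=\IZ$ discrete, the determinant is a group homomorphism $\Delta\colon U(\cZ^{\otimes\IF_\infty})\to\IR/\IZ$ with $\Delta(e^{ih})=\tau_{\cZ}(h)/2\pi$, $\Delta(\lambda 1)=\theta$ for $\lambda=e^{2\pi i\theta}$, and $\Delta\circ\sigma_w=\Delta$ because $\sigma_w$ preserves $\tau_{\cZ}$. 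If $\|\sigma_w(u')-\lambda u'\|<\delta$, write $\sigma_w(u')u'^{\ast}=\lambda e^{ih}$ with $\|h\|=O(\delta)$; then $0=\Delta(\sigma_w(u'))-\Delta(u')=\Delta(\sigma_w(u')u'^{\ast})=\theta+\tau_{\cZ}(h)/2\pi \bmod \IZ$, forcing $\mathrm{dist}(\theta,\IZ)=O(\delta)$. So for fixed $\lambda\neq 1$ (in particular $\lambda=-1$) there is a uniform $\delta_0>0$ with $\|\sigma_w(u')-\lambda u'\|\geq\delta_0$ for \emph{every} unitary $u'$, central or not. (This is consistent with your intuition from UHF shifts, where Kishimoto's Rokhlin projections do exist: there $(\tau)_{\ast}K_0$ is dense, so the determinant obstruction vanishes. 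It is exactly the passage from UHF to $\cZ$ that kills the mechanism.)

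A repair along your lines would have to retreat to the von Neumann algebraic setting: Ocneanu's Rokhlin theorem gives genuine towers and eigenunitaries in the weak closure (the shift on the hyperfinite II$_1$ factor), and one can try to lift them to $\cZ^{\otimes\IF_\infty}$ with control only in the $2$-norm of the canonical trace, reworking all your Cauchy--Schwarz estimates so they need only trace-norm smallness (this also requires showing that the corner restrictions of the unknown weight $\tau$ are product traces, using nuclearity and uniqueness of the trace on $\cZ^{\otimes\IF_\infty}$). That is a genuinely different and substantially heavier argument than the one you wrote. The paper avoids all of this with a norm-exact device in place of Rokhlin towers: since $(s_n)_{n\in\IN}$ generates a free semigroup, the central freeness observation (Example 4.14 of \cite{SuzCMP}) supplies, for the given $a$ (with local unit $e\in\Ped(A_0)$) and $\n\neq\emptyset$, finitely many positive elements $b_1,\dots,b_r\in\cZ^{\otimes\IF_\infty}\subset\cM(A)$ with $\sum_{i}b_i^2=1$ and $\sum_{i}b_i\,a\,\rho_{\n}(b_i)=0$, whence $\tau(aT_{\n}^{\ast})=\sum_i\tau(b_ie\,b_iaT_{\n}^{\ast})=\sum_i\tau(b_ia\rho_{\n}(b_i)T_{\n}^{\ast}e)=0$. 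A freeness partition of unity of this kind --- rather than a tower or an eigenunitary --- is exactly the kind of input that survives the $K$-theoretic obstruction above, and your proof cannot be completed without replacing its Rokhlin input by something of this nature.
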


\begin{Rem}\label{Rem:Ttrace}
We keep the setting of Theorem \ref{Thm:Ttrace}.
\begin{enumerate}
\item By Theorems \ref{Thm:KMSc} and \ref{Thm:Ttrace} and Remark \ref{Rem:gauge}, we obtain a bijection from the trace space ${\rm T}(\T)$ onto the space
\[{\rm T}_\fr(A):=\left\{\tau\in {\rm T}(A): \sum_{n\in \IN} \tau(\rho_n(a)) \leq \tau(a) {\rm~for~all~}a\in {\rm Ped}(A)_+ \right \}.\]
The bijection is given by the restriction:
\[{\rm T}(\T)\ni \tau \mapsto \tau|_{A_+}\in {\rm T}_\fr(A).\]

\item
The statement is strong enough for our main motivation ---construction of amenable actions on stably finite simple \Cs-algebras with prescribed (or at laest determinable) invariants---,
because taking the tensor product with 
$\cZ^{\otimes \IF_{\infty}} \cong \cZ$ does not change (the dynamical system on) the trace space and the (equivariant) Kasparov class,
and $\cZ$-stability is a standard requirement in classification theory of simple \Cs-algebras.

\item The resulting sequence $\fr$ satisfies conditions $(\diamondsuit 1)$ and $(\diamondsuit 2)$.
\item In the proof below, the Bernoulli shift action $\sigma \colon \IF_\infty \acts \cZ^{\otimes \IF_\infty}$ in the statement can be replaced by any centrally $\IF_\infty$-free action
 (that is, the identity inclusion is centrally $\IF_\infty$-free in the sense of \cite{SuzCMP})
on a separable \Cs-algebra.
\item Thanks to Theorem \ref{Thm:KMS}, the proof below in fact works
for all $\gamma_\fx$-KMS weights on $\T$,
for any $\fx\in \IR^\IN$.
Thus, for the present $(A, \fr)$, the statement of Theorem \ref{Thm:KMSc} holds true for all $\fx\in \IR^\IN$ (cf.~Remark \ref{Rem:gauge}).
\item The group $\IF_\infty$ and the sequence $(s_n)_{n\in \IN}$ in the construction can be replaced by
any countable group $\Gamma$ and a sequence $(t_n)_{n\in \IN} \subset \Gamma$
which generates a free semigroup in $\Gamma$. In particular, one can replace $\IF_\infty$ by a solvable group.
\end{enumerate}
\end{Rem}
\begin{proof}[Proof of Theorem \ref{Thm:Ttrace}]
Take any tracial weight $\tau$ on $\T$.
To show the gauge invariance of $\tau$, by Proposition \ref{Prop:trace} (and the tracial condition and the relations among $A$ and $T_\m$, $\m\in \fN$), it suffices to show the equality \[\tau(a T_\n^\ast)=0\quad{\rm~ for~} a\in {\rm Ped}(A_0)\odot \mathcal{Z}^{\odot \mathbb{F}_\infty} \subset A,~ \n \in \fN\setminus \{\emptyset\}.\]
Here `$\odot$' denotes the algebraic tensor product.
Take any $a\in {\rm Ped}(A_0)\odot \mathcal{Z}^{\odot \mathbb{F}_\infty} \subset A$ and $\n \in \fN\setminus \{\emptyset\}$.
By the linearity of $\tau$ (and the definition of the Pedersen ideal), we may assume that there exists $e\in {\rm Ped}(A_0)$ with $ae=ea=a$.
Observe that $(s_n)_{n\in \IN}$ generates a free semigroup in $\IF_\infty$,
hence by the observation in Example 4.14 of \cite{SuzCMP}, one can find a sequence $b_1, \ldots, b_r \in (\mathcal{Z}^{\otimes \mathbb{F}_\infty})_+ \subset \cM(A)$
satisfying
\[\sum_{i=1}^r b_i^2=1,\quad \sum_{i=1}^r b_i a \rho_{\n}(b_i) = 0.\]
By using this sequence, we obtain
\[\tau(a T_\n^\ast) = \sum_{i=1}^r \tau(b_ie b_i a T_\n^\ast)=\sum_{i=1}^r \tau(b_i a \rho_{\n}(b_i) T_\n^\ast e)=0.\]
\end{proof}

\section{Finite rank analogue}\label{section:fr}
In this section, we study the finite rank analogue of our previous construction.
More precisely, for a finite set $S$ with $|S|\geq 2$, we consider a family $\fu:=(\upsilon_s)_{s\in S}$ of injective, non-degenerate $\ast$-endomorphisms on $A$
satisfying the following condition.
\begin{description}
\item[($\heartsuit$)] There is no proper ideal $I$ of $A$
satisfying $\upsilon_s(I)\subset I$ for all $s\in S$.
\end{description}
We also fix $\fy:=( y_s)_{s\in S} \in \IR^S$.
Then, analogous to the infinite rank case $\E$ (see Section \ref{section:corr}), we define the \Cs-correspondence $\cF$ over $A$ and the unitary representation
$u_\fy \colon \IR \rightarrow \cU_A(\cF)$ as follows.
\begin{itemize}
\item $\cF= \ell^2(S)\otimes A$ as a right Hilbert \Cs-$A$-module.
\item The left $A$-action ${\pi_\fu} \colon A \rightarrow \IB_A(\cF)$ is given by
\[{\pi_\fu}(a)(\delta_s \otimes b):=\delta_s\otimes \upsilon_s(a)b.\]
\item $u_{\fy, t}(\delta_s\otimes a):=e^{iy_st} \delta_s\otimes a$.
\end{itemize}
Let $\gamma_\fy \colon \IR \acts \mathcal{T}_{\cF}$ and $\bar{\gamma}_\fy\colon \IR \acts \C$ denote the quasi-free flows
arising from $u_\fy$.

For the \Cs-correspondence $\cF$,
we have ${\pi_\fu}(A) \subset \IK_A(\cF)$.
Hence the Toeplitz--Pimsner algebra $\mathcal{T}_{\cF}$ and the reduced crossed product $\mathcal{T}_{\cF} \rtimes_{\gamma_\fy} \IR$ can never be simple.
Instead, we consider the Cuntz--Pimsner algebra $\C$.
After appropriate modifications, we obtain analogous results for $\C$ and
$\bar{\gamma}_\fy$.
Similar to the difference between Cuntz algebras $\mathcal{O}_n$ ($2\leq n<\infty$) and $\mathcal{O}_\infty$,
the structures of $\C$, $\C\rtimes_{\bar{\gamma}_\fy}\IR$
have some different features from the infinite rank variants $\T$($=\mathcal{O}_{\E}$), $\T\rtimes_{{\gamma}_\fx} \IR$.

We define $S^0:=\{\emptyset\}$ and $\fS:=\bigsqcup_{n\in \IN} S^n$.
We use the notations analogous to the infinite rank case introduced in Section \ref{section:corr} without explanations.
For instance, for $\fs=(s_1, \ldots, s_n)\in \fS$, we put
\[\upsilon_\fs:=\upsilon_{s_n}\circ \upsilon_{s_{n-1}}\circ \cdots \circ \upsilon_{s_1}.\]
For $\fs\in \fS$, denote by $S_\fs\in \cM(\C)$ the quotient image of $T_\fs \in \cM(\mathcal{T}_{\cF})$.

We note that, for the present $\cF$, the kernel of the quotient map $\mathcal{T}_{\cF} \rightarrow \C$
is the ideal generated by $\{a-\sum_{s\in S} T_s \upsilon_s(a) T_s^\ast:a\in A\}$.

We first give a classification result of KMS weights for $\bar{\gamma}_\fy$ (cf.~Theorem \ref{Thm:KMSc}).

\begin{Thm}\label{Thm:KMSc2}
Assume that $D(\IT^S)\subset K_\fy$.
Then, for every $\beta\in \IR$, there is a bijective correspondence
between $\bar{\gamma}_\fy$-$\beta$-KMS weights on $\C$ and
tracial weights $\tau$ on $A$ satisfying the equality
\[(\spadesuit\mathchar`-\beta)\quad \sum_{s\in S} e^{-\beta y_s}\tau(\upsilon_s(a))= \tau(a) \quad {\rm~for~all~}a\in \Ped(A)_+.\]
The bijection is given by
\[\varphi \mapsto \tau:=\varphi|_{A_+}.\]
\end{Thm}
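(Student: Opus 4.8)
The plan is to transcribe the proof of Theorem \ref{Thm:KMSc} to the Cuntz--Pimsner setting, using Propositions \ref{Prop:KMS}, \ref{Prop:trace} and Theorem \ref{Thm:KMS} as before, with the gauge fixed point algebra $\C^{\bar{\sigma}}$ and its conditional expectation $E^{\bar{\sigma}}$ in place of $\T^{\sigma}$, $E^{\sigma}$. As in the proof of Theorem \ref{Thm:KMSc}, I would first use the hypothesis $D(\IT^S)\subset K_\fy$ together with lower semi-continuity to see that every $\bar{\gamma}_\fy$-$\beta$-KMS weight $\varphi$ on $\C$ is gauge invariant, and record the two basic identities $\varphi(S_{\delta_\fs\otimes a}S_{\delta_\m\otimes b}^\ast)=0$ for $|\fs|\neq|\m|$ and $\varphi(S_{\delta_\fs\otimes a}S_{\delta_\m\otimes b}^\ast)=e^{-\beta y_\fs}\delta_{\fs,\m}\tau(b^\ast a)$ for $|\fs|=|\m|$, where $\tau:=\varphi|_{A_+}$. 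Since $A\subset\C^{\bar{\gamma}_\fy}$ and $\Ped(A)\subset\Ped(\C^{\bar{\gamma}_\fy})$, Proposition \ref{Prop:KMS} shows $\tau$ is a proper tracial weight on $A$; these identities determine $\varphi$ on $\cS:=\spa\{S_\fs a S_\m^\ast:\fs,\m\in\fS,\ a\in\Ped(A)\}$, and after checking that $\cS$ meets the hypotheses of Theorem \ref{Thm:KMS}, the uniqueness part of that theorem yields injectivity of $\varphi\mapsto\varphi|_{A_+}$.

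For the forward implication I would use the defining Cuntz--Pimsner relation $a=\sum_{s\in S}S_s\upsilon_s(a)S_s^\ast$, which in the present finite rank setting is an \emph{exact, finite} identity in $\C$ (not merely the increasing approximation available in $\T$). Applying the KMS weight $\varphi$ to both sides and using $\varphi(S_s b S_s^\ast)=e^{-\beta y_s}\tau(b)$ immediately gives $\tau(a)=\sum_{s\in S}e^{-\beta y_s}\tau(\upsilon_s(a))$ for all $a\in\Ped(A)_+$. This is precisely the point at which the equality $(\spadesuit\mathchar`-\beta)$ replaces the inequality $(\clubsuit\mathchar`-\beta)$: finiteness of $S$ and exactness of the Cuntz relation upgrade the Toeplitz inequality to an equality.

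The converse (existence) direction carries the main difficulty, and it is of a different nature than in Theorem \ref{Thm:KMSc}: in $\C$ the subspaces $B_n$ are \emph{not} linearly independent, so the expansion of an element of $\cS$ as $\sum_{\fs,\m}S_\fs a_{\fs,\m}S_\m^\ast$ is no longer unique and the naive formula $\sum_\fs e^{-\beta y_\fs}\tau(a_{\fs,\fs})$ is not manifestly well defined. I would circumvent this by building $\varphi$ through the expectation. Because $\C^{\bar{\sigma}}$ is the increasing union of the $B_n\cong\IK_A(\cF^{\otimes n})$, with the inclusion $B_n\hookrightarrow B_{n+1}$ implemented by $S_\fs a S_\m^\ast\mapsto\sum_{s\in S}S_{\fs\star s}\upsilon_s(a)S_{\m\star s}^\ast$, I would first define functionals $\tau_n$ on the Pedersen-level part of each $B_n$ by $S_\fs a S_\m^\ast\mapsto\delta_{\fs,\m}e^{-\beta y_\fs}\tau(a)$, where inside a fixed $B_n$ the matrix coefficients \emph{are} unique, so $\tau_n$ is unambiguous. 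The only compatibility needed is $\tau_{n+1}|_{B_n}=\tau_n$, and a direct computation reduces this, on a diagonal element $S_\fs a S_\fs^\ast$, to $e^{-\beta y_\fs}\tau(a)=\sum_{s}e^{-\beta y_{\fs\star s}}\tau(\upsilon_s(a))=e^{-\beta y_\fs}\sum_s e^{-\beta y_s}\tau(\upsilon_s(a))$, which is \emph{exactly} $(\spadesuit\mathchar`-\beta)$. Thus $(\spadesuit\mathchar`-\beta)$ is precisely the well-definedness condition, the $\tau_n$ glue to a single functional $\tau_\infty$ on $\bigcup_n B_n$, and I would set $\varphi:=\tau_\infty\circ E^{\bar{\sigma}}$ on $\cS$.

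It remains to verify that this $\varphi$ is positive and satisfies the $\bar{\gamma}_\fy$-$\beta$-KMS condition on $\cS$, after which Theorem \ref{Thm:KMS} produces the desired KMS weight with $\varphi|_{A_+}=\tau$. Positivity is in fact cleaner here than in Theorem \ref{Thm:KMSc}: since $\varphi=\varphi\circ E^{\bar{\sigma}}$ and $E^{\bar{\sigma}}$ carries $(\cS)_+$ into $\bigcup_n(B_n)_+$, it suffices that each $\tau_n$ be positive, and this follows because $\tau_n$ is the $\tau$-weighted sum of the diagonal coefficients of a positive element of $\IK_A(\cF^{\otimes n})$, which are positive (the injectivity of the $\upsilon_s$ ensuring the twist in $\cF^{\otimes n}\cong\ell^2(S^n)\otimes A$ preserves positivity); the induction on the degree needed for the inequality $(\clubsuit\mathchar`-\beta)$ in Theorem \ref{Thm:KMSc} is no longer required, since the nesting $B_n\subset B_{n+1}$ in $\C$ collapses $B_{\leq n}$ to $B_n$. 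The KMS condition on $\cS$ is then the same bookkeeping as in Theorem \ref{Thm:KMSc}: the two products $\varphi((S_\fs aS_\m^\ast)(S_\fk bS_\fl^\ast))$ and $\varphi((S_\fk bS_\fl^\ast)(S_\fs aS_\m^\ast))$ vanish unless $\fs,\m,\fk,\fl$ share a common tail, in which case the two values differ exactly by the gauge scaling $e^{-\beta(y_\fs-y_\m)}$, i.e.\ the $\gamma_{\fy,i\beta}$-factor. I expect the delicate step to be the well-definedness argument above --- confirming that $(\spadesuit\mathchar`-\beta)$ really is the only relation the $\tau_n$ must respect --- whereas the remaining verifications transcribe the $\T$ case essentially verbatim.
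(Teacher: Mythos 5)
Your proposal is essentially correct in outline, but it takes a genuinely different route from the paper, and the step you yourself flag as delicate --- well-definedness --- has a real gap as written. For comparison: the paper never constructs anything inside $\C$. It notes that the entire proof of Theorem \ref{Thm:KMSc} applies verbatim to the flow $\gamma_\fy$ on the Toeplitz algebra $\mathcal{T}_{\cF}$ (where the subspaces $B_n$ \emph{are} linearly independent, so no well-definedness question ever arises), and then reduces Theorem \ref{Thm:KMSc2} to deciding which $\gamma_\fy$-$\beta$-KMS weights on $\mathcal{T}_{\cF}$ factor through the quotient $\mathcal{T}_{\cF}\rightarrow \C$. Necessity of ($\spadesuit$-$\beta$) is read off from $\varphi\bigl(a-\sum_{s\in S}T_s\upsilon_s(a)T_s^\ast\bigr)=\tau(a)-\sum_{s\in S}e^{-\beta y_s}\tau(\upsilon_s(a))$, and the only real work is sufficiency: if the equality holds, then each positive element $d=c-\sum_{s\in S}T_s\upsilon_s(c)T_s^\ast$, $c\in \Ped(A)_+$, satisfies $\varphi(d)=0$, and a Cauchy--Schwarz estimate $|\varphi(adb)|^2\leq \|\gamma_{\fy,-i\beta/2}(a^\ast)\|^2\|\gamma_{\fy,-i\beta/2}(b)\|^2\varphi(d)^2=0$ for $\gamma_\fy$-entire $a,b$, together with lower semicontinuity, shows $\varphi$ annihilates the whole kernel ideal and hence descends. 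Your route instead builds the weight directly from the inductive limit $\C^{\bar{\sigma}}=\overline{\bigcup_n B_n}$; your two structural observations --- that ($\spadesuit$-$\beta$) is exactly the compatibility of the diagonal functionals $\tau_n$ under $B_n\hookrightarrow B_{n+1}$, and that positivity becomes automatic (no induction as in Theorem \ref{Thm:KMSc}) because each $\tau_n$ is a weighted sum of diagonal matrix entries --- are correct and give a more self-contained existence argument; your forward direction via the exact relation $a=\sum_{s\in S}S_s\upsilon_s(a)S_s^\ast$ is likewise fine and mirrors the paper's Toeplitz-side computation.

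The gap: your compatibility statement $\tau_{n+1}|_{B_n}=\tau_n$ is not well-posed as written, because the connecting map sends $S_\fs a S_\m^\ast$ to $\sum_{s\in S}S_{\fs\star s}\upsilon_s(a)S_{\m\star s}^\ast$, and the new entries $\upsilon_s(a)$ need \emph{not} lie in $\Ped(A)$: a $\ast$-endomorphism has no reason to preserve the Pedersen ideal. So $\tau_{n+1}$, which you define only on the Pedersen-level part of $B_{n+1}$, cannot even be evaluated on the image of the Pedersen-level part of $B_n$; likewise, reconciling two representations of an element of your $\cS$ inside a common $B_N$ forces you to apply ($\spadesuit$-$\beta$) to elements $\upsilon_{\mathfrak{q}}(a)$, $\mathfrak{q}\in\fS$, which lie outside the domain $\Ped(A)_+$ where the hypothesis speaks. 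This is fixable by one extra lemma: the equality ($\spadesuit$-$\beta$) extends automatically from $\Ped(A)_+$ to all of $A_+$ (with values in $[0,\infty]$). Indeed, both sides are lower semicontinuous weights on $A$; for $a\in A_+$ and an approximate unit $(e_k)_{k}\subset\Ped(A)$, the elements $a^{1/2}e_k^2a^{1/2}\in\Ped(A)_+$ satisfy $a^{1/2}e_k^2a^{1/2}\leq a$ and converge in norm to $a$, so by monotonicity and lower semicontinuity each side equals $\lim_k$ of its values at $a^{1/2}e_k^2a^{1/2}$, where the two sides agree by hypothesis. Granting this, $\tau(\upsilon_s(a))\leq e^{\beta y_s}\tau(a)$ shows $\cM_\tau$ is $\upsilon_s$-invariant, and you should run your whole construction with matrix entries in $\cM_\tau$ rather than in $\Ped(A)$; then the compatibility computation, the gluing, positivity, and the KMS bookkeeping go through as you describe, and Theorem \ref{Thm:KMS} finishes the proof. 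Note that the paper's Toeplitz-descent route sidesteps this issue entirely, since there one never needs to identify elements across different levels $B_n$ --- which is presumably why that route was chosen.
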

\begin{proof}
The proof of Theorem \ref{Thm:KMSc} works for the flow $\gamma_\fy \colon \IR \acts \mathcal{T}_{\cF}$ instead of $\gamma_\fx$ therein.
Therefore it suffices to show that a $\gamma_\fy$-$\beta$-KMS weight $\varphi$ on $\mathcal{T}_{\cF}$
factors through $\C$ if and only if
the tracial weight $\tau :=\varphi|_{A_+}$ satisfies the equality ($\spadesuit\mathchar`-\beta$) (which is stronger than the inequality corresponding to ($\clubsuit$-$\beta$) therein).

For any $\gamma_\fy$-$\beta$-KMS weight $\varphi$ on $\mathcal{T}_{\cF}$ and $a\in \Ped(A)$, with $\tau:=\varphi|_{A_+}$,
the KMS condition implies the equality
\[\varphi(a-\sum_{s\in S} T_s \upsilon_s(a)T_s^\ast)=\tau(a)-\sum_{s\in S} e^{-\beta y_s}\tau(\upsilon_s(a)).\] 
Since $a-\sum_{s\in S} T_s \upsilon_s(a)T_s^\ast$ is contained in the kernel of the quotient map $\mathcal{T}_{\cF} \rightarrow \C$,
this shows the necessity of the equation ($\spadesuit\mathchar`-\beta$) to $\varphi$ factoring through $\C$.

Conversely, assume that $\tau:=\varphi|_{A_+}$ satisfies the equation ($\spadesuit\mathchar`-\beta$).
Then, for any $a, b \in \mathcal{A}_{\gamma_\fy}$ and any $c\in \Ped(A)_+$, since 
\[d:=c-\sum_{s\in S} T_s \upsilon_s(c)T_s^\ast =(1-\sum_{s\in S} T_s T_s^\ast) c(1-\sum_{s\in S} T_s T_s^\ast) \geq 0,\quad \varphi(d)=0,\]
the Cauchy--Schwarz inequality and the KMS condition yield
\begin{align*}
|\varphi(adb)|^2
&\leq \varphi(ad a^\ast)\varphi(b^\ast d b)\\
&=\varphi(d^{1/2} \gamma_{\fy, -i\beta/2}(a^\ast) \gamma_{\fy, -i\beta/2}(a^\ast)^\ast d^{1/2})\varphi(d^{1/2} \gamma_{\fy, -i\beta/2}(b) \gamma_{\fy, -i\beta/2}(b)^\ast d^{1/2})\\
&\leq \|\gamma_{\fy, -i\beta/2}(a^\ast)\|^2 \|\gamma_{\fy, -i\beta/2}(b)\|^2 \varphi(d)^2=0.
\end{align*}
Since $\varphi$ is lower semi-continuous, this shows that $\varphi$ factors through $\C$, because the set
\[ \mathcal{A}_{\gamma_\fy}\cdot \left\{x-\sum_{s\in S} T_s \upsilon_s(x)T_s^\ast: x\in \Ped(A)\right\} \cdot \mathcal{A}_{\gamma_\fy}\]
spans a dense $\ast$-subalgebra of the kernel of the quotient map $\mathcal{T}_{\cF}\rightarrow \C$.
\end{proof}
Analogues of Theorem \ref{Thm:Ttrace} and Remark \ref{Rem:Ttrace}
are also valid in the finite rank case.
The proof is the same as that of Theorem \ref{Thm:Ttrace},
hence we only record the statement.
Let $\IF_S$ denote the free group generated by $S$,
and let $\sigma \colon \IF_S \acts \cZ^{\otimes \IF_S}$ denote the Bernoulli shift action.
\begin{Thm}
Let $A_0$ be a \Cs-algebra and $\fu_0=(\upsilon_{0, s})_{s\in S}$ be a family of non-degenerate, injective $\ast$-endomorphisms on $A_0$ satisfying condition $(\heartsuit)$.
On the \Cs-algebra $A:=A_0 \otimes \cZ^{\otimes \IF_S}$,
consider the family ${\fu}=(\upsilon_s)_{s \in S}:=(\upsilon_{0, s} \otimes \sigma_{s})_{s\in S}$ of $\ast$-endomorphisms.
Then any tracial weight on $\C$ is gauge invariant.
\end{Thm}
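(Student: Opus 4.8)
The plan is to transplant the proof of Theorem \ref{Thm:Ttrace} to the Cuntz--Pimsner algebra $\C$ and the finite family $\fu$. Fix a tracial weight $\tau$ on $\C$; I want to show it is invariant under the gauge action $\bar{\sigma}$. As in the infinite rank case, the inclusion $A\subset\C$ is non-degenerate (the quotient map is injective on $B_0=A$ and an approximate unit of $A$ is one for $\C$), so $\Ped(A)\subset\Ped(\C^{\bar{\sigma}})$ and Proposition \ref{Prop:trace} applies. Combining it with the tracial condition and the relations $aS_\fs=S_\fs\upsilon_\fs(a)$ and $S_\fs^\ast S_\fs=1$ in $\cM(\C)$, gauge invariance of $\tau$ reduces to the vanishing
\[\tau(aS_\fs^\ast)=0\quad\text{for all }a\in\Ped(A_0)\odot\cZ^{\odot\IF_S}\subset A,\ \fs\in\fS\setminus\{\emptyset\},\]
since the off-diagonal monomials $S_\n a S_\m^\ast$ (with $|\n|\neq|\m|$) are exactly the part of a dense $\ast$-subalgebra annihilated by $E^{\bar{\sigma}}$, and the trace condition rewrites each such monomial in the displayed form.

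Next I would fix such an $a$ and an $\fs=(s_1,\dots,s_n)\in\fS\setminus\{\emptyset\}$. By linearity of $\tau$ and the definition of the Pedersen ideal, I may assume there is $e\in\Ped(A_0)$ with $ae=ea=a$. The decisive input is central freeness. Since $S$ is a free generating set of $\IF_S$, the group element $g_\fs:=s_ns_{n-1}\cdots s_1$ is a nonempty positive word and hence nontrivial in $\IF_S$; consequently the Bernoulli shift $\sigma_{g_\fs}$ is centrally free, and on any $b\in\cZ^{\otimes\IF_S}\subset\cM(A)$ the endomorphism $\upsilon_\fs=\upsilon_{0,\fs}\otimes\sigma_{g_\fs}$ acts precisely as $\sigma_{g_\fs}$. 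By the observation in Example 4.14 of \cite{SuzCMP}, I can therefore find $b_1,\dots,b_r\in(\cZ^{\otimes\IF_S})_+\subset\cM(A)$ with
\[\sum_{i=1}^r b_i^2=1,\qquad \sum_{i=1}^r b_i\,a\,\upsilon_\fs(b_i)=0.\]

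Finally I would compute, using $\sum_i b_i^2=1$ (so that $a=\sum_i b_ieb_ia$, as $e$ commutes with each $b_i$ and $ea=a$), the tracial condition, and the identity $S_\fs^\ast b_i=\upsilon_\fs(b_i)S_\fs^\ast$:
\[\tau(aS_\fs^\ast)=\sum_{i=1}^r\tau(b_ieb_ia S_\fs^\ast)=\sum_{i=1}^r\tau\bigl(b_i\,a\,\upsilon_\fs(b_i)S_\fs^\ast e\bigr)=\tau\Bigl(\Bigl(\sum_{i=1}^r b_i\,a\,\upsilon_\fs(b_i)\Bigr)S_\fs^\ast e\Bigr)=0.\]
The step I expect to require the most care is the reduction of the first paragraph: because $\C$ is a genuine quotient of $\mathcal{T}_\cF$ carrying the extra relations $a=\sum_{s\in S}S_s\upsilon_s(a)S_s^\ast$, the monomials $S_\n a S_\m^\ast$ are no longer linearly independent, so one must verify that gauge invariance is still equivalent to vanishing on the off-diagonal terms. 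This is harmless, because the only ingredients actually used --- the relation $aS_\fs=S_\fs\upsilon_\fs(a)$, the isometry identity $S_\fs^\ast S_\fs=1$, and Proposition \ref{Prop:trace} --- all survive in $\C$, and linear independence is never invoked in a vanishing argument; the genuine mechanism remains the central freeness of $\sigma_{g_\fs}$. Once this reduction is secured, the remaining manipulations are verbatim those of the infinite rank case.
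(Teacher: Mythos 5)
Your proposal is correct and coincides with the paper's own proof: the paper explicitly states that the finite rank case "is the same as that of Theorem \ref{Thm:Ttrace}", and your transplant is exactly that argument, with the right adjustments (nontriviality of the positive word $g_\fs=s_n\cdots s_1$ in $\IF_S$ so that Example 4.14 of \cite{SuzCMP} applies, and the observation that the loss of linear independence of the monomials $S_\n a S_\m^\ast$ in $\C$ is harmless for a vanishing argument). Your reduction via Proposition \ref{Prop:trace}, the relations $S_\fs^\ast b=\upsilon_\fs(b)S_\fs^\ast$, and the final trace computation match the paper's proof verbatim.
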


We next show the simplicity of $\C$.
The proof of the simplicity of the Toeplitz--Pimsner algebra for $\E$ (Theorem \ref{Thm:Tsimple})
does not work for $\cF$ because $B_n$ is no longer essential in $B_{\leq n}$ for $n\geq 1$.
Indeed for any $a\in A$ and any $n\in \IN$, one has
\[\left(a - \sum_{\fs\in S^n} T_\fs \upsilon_\fs(a) T_\fs^\ast \right)\cdot B_n=\{0\}=B_n\cdot \left(a - \sum_{\fs\in S^n} T_\fs \upsilon_\fs(a) T_\fs^\ast \right),\]
while $a - \sum_{\fs\in S^n} T_\fs \upsilon_\fs(a) T_\fs^\ast\neq 0$ unless $a\neq 0$.

\begin{Lem}\label{Lem:giidf}
There is no proper ideal $I$ of $\C^{\bar{\sigma}}$ satisfying
$S_\fs^\ast \cdot I \cdot S_\fs\subset I$ for all $\fs\in \fS$. 
\end{Lem}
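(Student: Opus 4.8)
The plan is to mirror the proof of Lemma~\ref{Lem:giid}, exploiting that for $\cF$ the subspaces $B_n$ form an \emph{increasing} sequence of C*-subalgebras with $\C^{\bar{\sigma}}=\overline{\bigcup_{n\in \IN}B_n}$ (since $B_0\subset B_1\subset\cdots$ in $\C$, the image of $B_{\leq n}$ is exactly $B_n$). Given a nonzero ideal $I\subset \C^{\bar{\sigma}}$ as in the statement, I would first show that $I$ meets some $B_n$ nontrivially: if $I\cap B_n=\{0\}$ for all $n$, then the quotient map $\C^{\bar{\sigma}}\to \C^{\bar{\sigma}}/I$ is injective, hence isometric, on each $B_n$, thus on the dense subalgebra $\bigcup_n B_n$, and therefore on all of $\C^{\bar{\sigma}}$, forcing $I=\{0\}$. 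So I may fix $n$ with $I\cap B_n\neq\{0\}$.

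Next I would use the canonical isomorphism $\Phi_n\colon B_n\xrightarrow{\cong}\IK_A(\cF^{\otimes n})\cong \IK(\ell^2(S^n))\otimes A$; as $S^n$ is finite, the last algebra is $M_{|S|^n}(A)$. Under this identification the nonzero ideal $I\cap B_n$ of $B_n$ corresponds to $M_{|S|^n}(J_0)$ for a unique nonzero ideal $J_0\subset A$; equivalently $I\cap B_n=\cspa\{S_\xi S_\eta^\ast:\xi,\eta\in \cF^{\otimes n}J_0\}$. In particular, for each $\fs\in S^n$ and $a\in J_0$ the diagonal element $S_\fs a S_\fs^\ast$ (which corresponds to $\theta_{\delta_\fs\otimes a,\,\delta_\fs\otimes 1}$) lies in $I$. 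Compressing by $S_\fs^\ast(\cdot)S_\fs$ and using $S_\fs^\ast S_\fs=1$ gives $S_\fs^\ast(S_\fs a S_\fs^\ast)S_\fs=a$, so the hypothesis $S_\fs^\ast\cdot I\cdot S_\fs\subset I$ yields $J_0\subset A\cap I$. Hence $A\cap I$ is a nonzero ideal of $A$.

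It then remains to run the endomorphism-invariance argument. For $a\in A\cap I$ and $s\in S$, the relation $aS_s=S_s\upsilon_s(a)$ together with $S_s^\ast S_s=1$ gives $\upsilon_s(a)=S_s^\ast a S_s\in S_s^\ast\cdot I\cdot S_s\subset I$; since $\upsilon_s(a)\in A$, this shows $\upsilon_s(A\cap I)\subset A\cap I$ for every $s\in S$. By condition $(\heartsuit)$ the only nonzero ideal of $A$ invariant under all $\upsilon_s$ is $A$ itself, so $A\subset I$. Finally, since $A\subset \C$ is non-degenerate (being the quotient of the non-degenerate inclusion $A\subset \mathcal{T}_{\cF}$), an approximate unit of $A$ serves as an approximate unit of $\C^{\bar{\sigma}}$, and $A\subset I$ forces $I=\C^{\bar{\sigma}}$, as required.

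The main obstacle relative to the Toeplitz case is that here $B_n$ is no longer essential in $B_{\leq n}$ (as recorded just before the lemma), so the essential-ideal step behind Lemma~\ref{Lem:nondeg} and Lemma~\ref{Lem:giid} is unavailable; the remedy is precisely the passage to the increasing picture $\C^{\bar{\sigma}}=\overline{\bigcup_n B_n}$, which makes the ``$I$ meets some $B_n$'' step immediate and lets the rest proceed exactly as in the infinite-rank case. The only genuinely new bookkeeping is the explicit computation of the compressions $S_\fs^\ast(\cdot)S_\fs$ on $B_n$, which I would carry out using $S_\fs a=S_{\delta_\fs\otimes a}$ and $\ip{\delta_\fs\otimes a,\delta_\ft\otimes b}=\delta_{\fs\ft}\,a^\ast b$.
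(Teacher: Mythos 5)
Your proposal is correct and follows essentially the same route as the paper's proof: use $B_n\subset B_{n+1}$ so that $\C^{\bar{\sigma}}=\overline{\bigcup_n B_n}$, find $n$ with $I\cap B_n\neq\{0\}$, identify $I\cap B_n$ with $\IK(\ell^2(S^n))\otimes J_0$ for a nonzero ideal $J_0\subset A$, compress by $S_\fs^\ast(\cdot)S_\fs$ to get $J_0\subset A\cap I$, and invoke condition $(\heartsuit)$ together with non-degeneracy of $A\subset\C$. The only difference is that you spell out two steps the paper leaves implicit (why $I$ must meet some $B_n$, and why $A\subset I$ forces $I=\C^{\bar{\sigma}}$), both of which are handled correctly.
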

\begin{proof}
Since $a=\sum_{s\in S} S_s \upsilon_s(a) S_s^\ast$ for all $a\in A$, we have
$B_n \subset B_{n+1}$ for all $n\in \IN$.
Take a nonzero ideal $I \subset \C^{\bar{\sigma}}$ satisfying $S_\fs^\ast \cdot I \cdot S_\fs\subset I$ for all $\fs\in \fS$. 
Since the increasing union $\bigcup_{n\in \IN} B_n$ is dense in $\C^{\bar{\sigma}}$,
we have $n\in \IN$ with $I\cap B_n \neq \{0\}$.
Let $\Theta_n\colon B_n \rightarrow \IK(\ell^2(S^n))\otimes A$
denote the canonical $\ast$-isomorphism.
Choose an ideal $J_n\subset A$ with
\[\Theta_n(I\cap B_{n})=\IK(\ell^2(S^n))\otimes J_n.\]
Observe that for any $\fs \in S^n$ and any $a\in A$,
one has
\[S_{\fs}^\ast\Theta_n^{-1}(\delta_\fs \otimes a)S_{\fs}=a.\]
This yields
\[\{0\} \neq J_n \subset I \cap A,\]
whence
\[\upsilon_{\fs}(J_n) = S_\fs^\ast \cdot J_n \cdot S_\fs \subset S_\fs^\ast \cdot I \cdot S_\fs \subset I {\rm~for~all~}\fs\in \fS.\]
By condition $(\heartsuit)$ of $\fu$, we conclude $A\subset I$, whence $I=\C^{\bar{\sigma}}$.
\end{proof}
\begin{Rem}
Assume that $\fu$ satisfies the following stronger condition:
For any nonzero ideal $I$ of $A$, there is $n\in \IN$
with $\cI(\bigcup_{\fs\in S^n} \upsilon_\fs(I); A) =A$.
Then one can conclude the simplicity of $\C^{\bar{\sigma}}$. 
\end{Rem}

\begin{Thm}\label{Thm:Csimple}
The Cuntz--Pimsner algebra $\C$ is simple.
\end{Thm}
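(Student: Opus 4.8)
The plan is to follow the architecture of the proof of Theorem \ref{Thm:Tsimple}, reducing simplicity of $\C$ to the assertion that every nonzero ideal $I\subset\C$ meets the coefficient algebra $A$ nontrivially. Indeed, once $A\cap I\neq\{0\}$ is known, $A\cap I$ is a nonzero ideal of $A$ satisfying $\upsilon_s(A\cap I)=S_s^\ast\cdot(A\cap I)\cdot S_s\subset A\cap I$ for all $s\in S$, so condition $(\heartsuit)$ forces $A\cap I=A$; as the inclusion $A\subset\C$ is non-degenerate, $A$ contains an approximate unit of $\C$, whence $I=\C$. To locate a nonzero element of $A\cap I$, I would first argue exactly as in Theorem \ref{Thm:Tsimple}: $E^{\bar\sigma}(I)$ is a nonzero algebraic ideal of $\C^{\bar\sigma}$ satisfying $S_\fs^\ast\cdot E^{\bar\sigma}(I)\cdot S_\fs=E^{\bar\sigma}(S_\fs^\ast\cdot I\cdot S_\fs)\subset E^{\bar\sigma}(I)$ for all $\fs\in\fS$, so Lemma \ref{Lem:giidf} makes it dense in $\C^{\bar\sigma}$, and since $\Ped(A)\subset\Ped(\C^{\bar\sigma})$ this yields $A\cap E^{\bar\sigma}(I)\neq\{0\}$. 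Thus there is a self-adjoint $x\in I$ with $E^{\bar\sigma}(x)=a\in A_+$ and $\|a\|=1$.

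The crux is the compression step, and it is precisely here that the device of Theorem \ref{Thm:Tsimple} fails: since $S$ is finite, one cannot choose a word longer than the degree of $x$ whose letters are pairwise distinct, which was exactly what annihilated the off-diagonal terms. Fix $\epsilon\in(0,1)$ and approximate $x\approx_\epsilon\sum_{|m|\le N}x_m$ by a finite gauge-homogeneous sum with $x_0=a$, where each $x_m$ lies in the span of elements $S_\xi S_\eta^\ast$ with $\xi\in\cF^{\otimes k}$, $\eta\in\cF^{\otimes l}$, $k-l=m$, $k,l\le N$. My plan is to replace the distinct-letters trick by a \emph{single} deep word $\fs\in S^{N'}$, with $N'>N$ chosen so that $S_\fs^\ast x_m S_\fs=0$ for every $m\neq0$, while the diagonal term survives with full norm: since each $\upsilon_s$ is injective, hence isometric, the relation $aS_\fs=S_\fs\upsilon_\fs(a)$ gives $S_\fs^\ast a S_\fs=\upsilon_\fs(a)\in A$ with $\|\upsilon_\fs(a)\|=\|a\|=1$. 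Then $I\ni S_\fs^\ast x S_\fs\approx_\epsilon\upsilon_\fs(a)$, so in $\C/I$ one has $\|[\upsilon_\fs(a)]\|<\epsilon<1=\|\upsilon_\fs(a)\|_A$; hence the quotient map $A\to A/(A\cap I)$ is not isometric and $A\cap I\neq\{0\}$, as required.

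The remaining and, I expect, \emph{main} obstacle is to justify the combinatorial choice of $\fs$. Writing $\fs_{i:j}$ for the factor of $\fs$ in positions $i,\dots,j$, a direct computation with the Pimsner relations shows that, for $\xi\in\cF^{\otimes k}$, $\eta\in\cF^{\otimes l}$ with $0\le l<k\le N$, the term $S_\fs^\ast S_\xi S_\eta^\ast S_\fs$ is nonzero only when $\fs_{1:k}$ and $\fs_{1:l}$ are the words underlying $\xi,\eta$, and then it equals $S_{\fs_{k+1:N'}}^\ast S_{\fs_{l+1:N'}}\,\upsilon_{\fs_{l+1:N'}}(\cdot)$, which vanishes unless the length-$(N'-k)$ prefix of $\fs_{l+1:N'}$, namely $\fs_{l+1:N'-(k-l)}$, coincides with $\fs_{k+1:N'}$ (the case $k<l$ being the adjoint). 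It therefore suffices to take $\fs\in S^{N'}$ such that $\fs_{k+1:N'}\neq\fs_{l+1:N'-(k-l)}$ for all $0\le l<k\le N$, i.e. $\fs$ carries no long repeated factor at any offset $\le N$. Such words exist once $N'$ is large: each forbidden coincidence imposes a period-$(k-l)$ constraint on $\fs_{l+1:N'}$ and so is satisfied by at most $|S|^{k}\le|S|^{N}$ words, while there are only $O(N^2)$ such coincidences and $|S|^{N'}\ge2^{N'}$ words in all, so a valid $\fs$ exists for, say, $N'>N+2\log_{|S|}(N+1)$. I expect the bookkeeping in verifying that this factor condition really forces $S_\fs^\ast x_m S_\fs=0$ for all off-diagonal $m$—tracking the $A$-coefficients through the reductions—to be the only delicate part, the preliminary reduction via $E^{\bar\sigma}$ and Lemma \ref{Lem:giidf} going through verbatim.
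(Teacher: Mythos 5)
Your proposal is correct, and its overall architecture coincides with the paper's proof: pass to $E^{\bar{\sigma}}(I)$, use Lemma \ref{Lem:giidf} together with $\Ped(A)\subset\Ped(\C^{\bar{\sigma}})$ to produce $x\in I$ with $E^{\bar{\sigma}}(x)\in A$ of norm one, and then compress a gauge-truncated approximant of $x$ by a single deep isometry $S_\fs$ so that all off-diagonal homogeneous terms vanish while the degree-zero term survives with full norm (by injectivity of the $\upsilon_s$). The only point where you genuinely diverge is the combinatorial selection of $\fs$. You isolate the exact vanishing criterion (no coincidence $\fs_{k+1:N'}=\fs_{l+1:N'-(k-l)}$ for $0\le l<k\le N$, i.e.\ no long periodic factor) and establish existence of such a word by a counting estimate, which is valid as you state it. The paper instead observes that it suffices to take $\fs\in S^{2N}$ whose $N+1$ consecutive length-$N$ subwords $\fs_{i+1:i+N}$, $i=0,\dots,N$, are pairwise distinct: any coincidence as above would force two of these windows to agree, since $2N-k\geq N$. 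Such a word can be written down explicitly, e.g.\ $N$ copies of $s$ followed by $N$ copies of $t$ for distinct $s,t\in S$, so the paper's device is shorter and avoids the cardinality estimate, while yours identifies the sharp condition and gives a near-optimal length bound. Your concluding step also differs slightly but harmlessly: you apply condition $(\heartsuit)$ directly to the ideal $A\cap I$, whereas the paper concludes $I\cap\C^{\bar{\sigma}}\neq\{0\}$ and invokes Lemma \ref{Lem:giidf} a second time; both close the argument.
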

\begin{proof}
Let $I$ be a nonzero ideal of $\C$.
In the analogous ways to the proof of Theorem \ref{Thm:Tsimple},
we define the subspaces $V_n, V_{n, \leq m}\subset \C$ for $n\in \IZ$ and $m\geq |n|$.
Then by using Lemma \ref{Lem:giidf} instead of Lemma \ref{Lem:giid} in the proof of Theorem \ref{Thm:Tsimple},
one can find $x\in I$ satisfying $\|E^{\bar{\sigma}}(x)\|=1$, $E^{\bar{\sigma}}(x)\in A$.

Take $N\in \IN$, $ x_n\in V_{n, \leq N}$; $n=-N,\cdots, N$ satisfying
\[x\approx_{1/2}\sum_{n=-N}^N x_n,\quad x_0=E^{\bar{\sigma}}(x).\]
Take $\fs=(s_1, s_2, \ldots, s_{2N})\in S^{2N}$ such that the subsequences
\[(s_1, \ldots, s_{N}), (s_2, \ldots, s_{N+1}), \ldots, (s_{N+1}, \ldots, s_{2N})\in S^{N}\]
are mutually distinct.
(For instance, take two distinct elements $s, t \in S$.
Define $s_1=s_2=\cdots = s_{N}=s$, $s_{N+1}=\cdots = s_{2N}=t$.
Then the resulting sequence $\fs=(s_1, s_2, \ldots, s_{2N})$ has the desired property.)
We then have
\[S_\fs^\ast\cdot V_{n, \leq N} \cdot S_\fs =\{0\} \quad {\rm~for~all~}n\in \IZ {\rm~with~}1 \leq |n|\leq N.\]
This implies
\[I \ni S_\fs^\ast x S_\fs \approx_{1/2} \upsilon_\fs(x_0) \in \C^{\bar{\sigma}},\]
whence $I \cap \C^{\bar{\sigma}}\neq \{0\}$.
This together with Lemma \ref{Lem:giidf} implies
$I=\C$.
\end{proof}

The proof of Lemma \ref{Lem:idBn} also works for the finite rank case.
Therefore the rest of the proof of the following theorem is the same as the proof of Theorem \ref{Thm:Rsimple},
after using Theorem \ref{Thm:Csimple} instead of Theorem \ref{Thm:Tsimple}.
We thus omit the proof.

\begin{Thm}\label{Thm:RsimpleC}
Assume that $\fy=(y_s)_{s\in S}\in \IR^S$ satisfies the following two conditions.
\begin{itemize}
\item $D(\IT^S)\subset K_\fy$.
\item For any nonzero ideal $I$ of $A$, there is a nonzero ideal $J$ of $A$ such that the set
$\{y_\fs: \fs\in \fS, J \subset \cI(\rho_{\fs}(I); A)\}$ is dense in $\IR$.
\end{itemize}
Then the reduced crossed product $\C\rtimes_{\bar{\gamma}_\fy} \IR$ is simple.
\end{Thm}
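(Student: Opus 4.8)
The plan is to mirror the proof of Theorem~\ref{Thm:Rsimple} almost verbatim, substituting the finite-rank data $(\cF,\bar{\gamma}_\fy,S_\fs)$ for $(\E,\gamma_\fx,T_\n)$ and invoking Theorem~\ref{Thm:Csimple} in place of Theorem~\ref{Thm:Tsimple}. First I would introduce the circle action $\tilde{\sigma}\colon\IT\acts\C\rtimes_{\bar{\gamma}_\fy}\IR$ lifting the gauge action, namely $(\tilde{\sigma}_z(f))(t):=\bar{\sigma}_z(f(t))$, which is well defined since $\bar{\sigma}$ commutes with $\bar{\gamma}_\fy$. The hypothesis $D(\IT^S)\subset K_\fy$ makes each $\bar{\sigma}_z$ a point-norm limit of flow automorphisms $\bar{\gamma}_{\fy,t}$, which are inner in the crossed product; hence every $\tilde{\sigma}_z$ is approximately inner, every ideal $I\subset\C\rtimes_{\bar{\gamma}_\fy}\IR$ is $\tilde{\sigma}$-invariant, and $E^{\tilde{\sigma}}(I)\subset I$. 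For nonzero $I$, the set $E^{\tilde{\sigma}}(I)$ is then a nonzero ideal of $(\C\rtimes_{\bar{\gamma}_\fy}\IR)^{\tilde{\sigma}}=\C^{\bar{\sigma}}\rtimes_{\bar{\gamma}_\fy}\IR$, and since $\tilde{\sigma}_z(S_\fs)=z^{|\fs|}S_\fs$ one has $E^{\tilde{\sigma}}(S_\fs^\ast x S_\fs)=S_\fs^\ast E^{\tilde{\sigma}}(x)S_\fs$, so $E^{\tilde{\sigma}}(I)$ is invariant under $x\mapsto S_\fs^\ast x S_\fs$ for all $\fs\in\fS$.

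Next I would supply the finite-rank analogues of Lemmas~\ref{Lem:idT} and \ref{Lem:idBn}. For the analogue of Lemma~\ref{Lem:idT}, I would use that here $\C^{\bar{\sigma}}=\overline{\bigcup_{n}B_n}$ is an \emph{increasing} union (the inclusions $B_n\subset B_{n+1}$ hold because $a=\sum_{s\in S}S_s\upsilon_s(a)S_s^\ast$), so a nonzero ideal of $\C^{\bar{\sigma}}\rtimes_{\bar{\gamma}_\fy}\IR$ invariant under $x\mapsto S_\fs^\ast x S_\fs$ meets some $B_n\rtimes_{\bar{\gamma}_\fy}\IR$. Since $\bar{\gamma}_\fy$ restricts to an inner flow on $B_n$, implemented by $\Theta_n^{-1}\circ u_{\fy}^{\otimes n}$, one gets $B_n\rtimes_{\bar{\gamma}_\fy}\IR\cong\IK(\ell^2(S^n))\otimes A\otimes\Cso(\IR)$; extracting an elementary ideal $(\IK(\ell^2(S^n))\otimes J_0)\otimes K$ and compressing by $S_\fs^\ast\cdot-\cdot S_\fs$ for a fixed $\fs\in S^n$, via the identity $S_\fs^\ast\Theta_n^{-1}(\delta_\fs\otimes a)S_\fs=a$ from Lemma~\ref{Lem:giidf}, yields $\{0\}\neq J_0\otimes K\subset E^{\tilde{\sigma}}(I)\cap(A\rtimes_{\bar{\gamma}_\fy}\IR)$. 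The analogue of Lemma~\ref{Lem:idBn} carries over unchanged: its proof uses only that $\bar{\gamma}_\fy$ is trivial on $A$ (so $A\rtimes_{\bar{\gamma}_\fy}\IR\cong A\otimes\Cso(\IR)$) together with the second hypothesis of the theorem, now read with $\upsilon_\fs$ in place of $\rho_\n$.

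Applying these two lemmas to $E^{\tilde{\sigma}}(I)\ (\subset I)$ produces a nonzero ideal $J\subset A$ with $J\rtimes_{\bar{\gamma}_\fy}\IR\subset I$. To finish, I would either invoke the simplicity of $\C$ (Theorem~\ref{Thm:Csimple}) directly, so that the ideal of $\C\rtimes_{\bar{\gamma}_\fy}\IR$ generated by $J\rtimes_{\bar{\gamma}_\fy}\IR$ is everything, or argue directly that $\upsilon_\fs(J)\rtimes_{\bar{\gamma}_\fy}\IR=S_\fs^\ast(J\rtimes_{\bar{\gamma}_\fy}\IR)S_\fs\subset I$ for every $\fs\in\fS$ and then apply condition~$(\heartsuit)$ of $\fu$ to conclude $I=\C\rtimes_{\bar{\gamma}_\fy}\IR$.

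The part requiring the most care is the finite-rank analogue of Lemma~\ref{Lem:idT}. Unlike the infinite-rank situation, where $B_n$ sits as an essential ideal inside the strictly larger $B_{\leq n}$ (Lemma~\ref{Lem:nondeg}) and one must argue that $B_n\rtimes\IR$ is hit non-degenerately, here the filtration collapses to the increasing union $B_n\subset B_{n+1}$ with each $B_n\cong\IK(\ell^2(S^n))\otimes A$; the genuine check is that the compression $S_\fs^\ast\cdot-\cdot S_\fs$ descends an ideal from $B_n\rtimes_{\bar{\gamma}_\fy}\IR$ to a \emph{nonzero} ideal of $A\rtimes_{\bar{\gamma}_\fy}\IR$, for which one selects $\fs\in S^n$ and combines the matrix-unit identity of Lemma~\ref{Lem:giidf} with $\bar{\gamma}_\fy|_A=\id$.
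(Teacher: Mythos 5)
Your proposal is correct and follows exactly the route the paper intends: the paper omits the proof of Theorem \ref{Thm:RsimpleC}, stating that it is the same as that of Theorem \ref{Thm:Rsimple} with Theorem \ref{Thm:Csimple} in place of Theorem \ref{Thm:Tsimple} and with Lemma \ref{Lem:idBn} carrying over verbatim, which is precisely your argument. Your explicit treatment of the finite-rank analogue of Lemma \ref{Lem:idT} --- noting that the filtration collapses to the increasing union $B_n \subset B_{n+1}$, so the essentiality/non-degeneracy step of Lemma \ref{Lem:nondeg} is not needed and the compression $S_\fs^\ast \cdot - \cdot S_\fs$ via Lemma \ref{Lem:giidf} does the rest --- is exactly the adaptation the paper leaves to the reader.
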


\begin{Exm}
When $\fy=(y_s)_{s\in S} \in (\IR\setminus\{0\})^S$,
then for any $(c_s)_{s\in S} \in (0, \infty)^S$ and $\beta_0 \in \IR$
satisfying
\[\sum_{s\in S} c_s e^{-\beta_0 y_s}\leq 1,\]
the Intermediate Value Theorem shows the existence of $\beta\in \IR$ with
\[\sum_{s\in S} c_s e^{-\beta y_s}=1\]

Thus, if $A$ admits a tracial weight $\tau$ and positive constants $c_s$; $s\in S$
satisfying $\tau \circ \upsilon_s=c_s \cdot \tau$ for all $s\in S$, and $c_{s_1}+c_{s_2}<1$ for two distinct elements $s_1, s_2\in S$,
then one can find $\fy=(y_n)_{s\in S}\in \IR^S$ which is linearly independent over $\mathbb{Q}$, $y_{s_1} y_{s_2}<0$,
and satisfies the equation ($\spadesuit\mathchar`-\beta$) in Theorem \ref{Thm:KMSc2} for $\tau$ and some $\beta \in \IR$:
\[\sum_{s\in S} e^{-\beta y_s}\tau(\upsilon_s(a))= \tau(a) \quad {\rm~for~all~}a\in \Ped(A)_+\]

If we additionally assume that
$I \subset \cI(\rho_{s_1}(I); A), \cI(\rho_{s_2}(I); A)$ for all ideals $I$ of $A$,
then the set $\{y_\fs: \fs\in \fS, I \subset \cI(\rho_{\fs}(I); A)\}$ contains the subsemigroup of $\IR$ generated by $y_{s_1}$ and $y_{s_2}$,
which is dense in $\IR$.
Therefore, by Theorems \ref{Thm:KMSc2} and \ref{Thm:RsimpleC}, for such $\fy$, the reduced crossed product $\C \rtimes_{\bar{\gamma}_\fy} \IR$
is stably finite and simple.
On the one hand, in the case that $\sum_{s\in S} c_s \neq 1$, the above $\beta$ must be nonzero.
Hence the simple \Cs-algebra $\C \rtimes_{\bar{\gamma}_\fy} \IR$ is in fact stably projectionless, by Remark \ref{Rem:spl}.
On the other hand, in the case that $\sum_{s\in S} c_s =1$, the \Cs-algebra $\C$, which is simple by Theorem \ref{Thm:Csimple}, is also stably finite by Theorem \ref{Thm:KMSc2}.
\end{Exm}

\section{Dynamical version of the constructions and their properties}\label{section:equiv}
In the rest of this article, let $G$ be a locally compact second countable group.
Now we would like to apply our constructions to $G$-\Cs-algebras $A$.
Denote by $\theta \colon G \curvearrowright A$ the equipped action.
We first consider the infinite rank case (Section \ref{section:corr}).
To respect the given group action in the construction, we further require all members $\rho_n$ of $\fr$ to be $G$-equivariant.
We then equip ${\mathcal{E}_{\fr}}=\ell^2(\IN) \otimes A$ with the $G$-action $\check{\theta}:=\id_{\ell^2(\IN)} \otimes \theta$.
This makes ${\mathcal{E}_{\fr}}$ a $G$-\Cs-correspondence over $A$,
that is, the action satisfies the equalities
\[\check{\theta}_g(\pi_{\fr}(a)\xi)= \pi_{\fr}(\theta_g(a))\check{\theta}_g(\xi),\quad \check{\theta}_g(\xi a)=\check{\theta}_g(\xi)\theta_g(a),\quad
\ip{\check{\theta}_g(\xi), \check{\theta}_g(\eta)}=\theta_g(\ip{\xi, \eta})\]
for $g\in G$, $a\in A$, $\xi, \eta \in \E$.
By the universality of $\T$, one can define an action $\Theta \colon G \curvearrowright \T$
by the formula
\[\Theta_g(T_{\delta_n\otimes a}):=T_{\delta_n \otimes {\theta_g}(a)}\quad{\rm~for~}g\in G, n\in \IN, a\in A.\]
Clearly $\Theta$ commutes with the flow $\gamma_\fx$ for every $\fx\in \IR^\IN$.
Hence $\Theta$ induces the action
\[\Xi \colon G \curvearrowright \T \rtimes_{\gamma_\fx} \IR\]
given by
\[\Xi_g(f)(t):=\Theta_g(f(t))\quad{\rm~for~}g\in G, f\in C_c(\IR, \T), t\in \IR.\]
We first show that these constructions preserve amenability of the actions.
In the proof below, Theorem 5.1 of \cite{OS} plays a fundamental role.
\begin{Thm}\label{Thm:ame}
The actions
\[\Theta\colon G \acts \T, \quad \Xi \colon G \curvearrowright \T \rtimes_{\gamma_\fx} \IR\]
are amenable if and only if the original action
$\theta\colon G \acts A$ is amenable.
\end{Thm}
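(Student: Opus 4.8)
The plan is to reduce the claim for $\Theta \colon G \acts \T$ to the preservation of amenability under the Pimsner construction, established as Theorem~5.1 of \cite{OS}, and then to obtain the statement for $\Xi$ by a separate argument using the structure of the crossed product by $\IR$. For the easy direction, amenability passes to $G$-invariant \Cs-subalgebras and to $G$-quotients, so the amenability of $\Theta$ (or of $\Xi$) forces that of $\theta\colon G \acts A$ via the equivariant inclusion $A \subset \T$ (which is non-degenerate and $G$-equivariant by construction). For the converse, I would first verify that $\cE_\fr$ is genuinely a $G$-\Cs-correspondence over $A$, which the excerpt has already checked via the displayed covariance identities for $\check{\theta}$. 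The key point is that Theorem~5.1 of \cite{OS} asserts precisely that the induced action on the Toeplitz--Pimsner algebra of an equivariant \Cs-correspondence is amenable whenever the coefficient action is; applying this to $\check\theta$ and $\Theta$ immediately yields the amenability of $\Theta\colon G\acts\T$. I would state this application as the first main step.

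For the second action $\Xi \colon G \acts \T \rtimes_{\gamma_\fx}\IR$, the natural approach is to exploit that amenability is preserved under forming reduced crossed products by the commuting flow. Here I would use that $\gamma_\fx$ commutes with $\Theta$, so that $\Xi$ is the action induced on $\T \rtimes_{\gamma_\fx}\IR$ by functorial naturality of the reduced crossed product construction. Amenability of a $G$-action is inherited by $\IR \rtimes_{\gamma_\fx}(-)$ when $\IR$ acts in a way that commutes with $G$; more precisely, if $\Theta\colon G\acts\T$ is amenable and $\gamma_\fx$ is a commuting flow, then the diagonal picture realizes $\T\rtimes_{\gamma_\fx}\IR$ as a $G$-algebra on which amenability descends. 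I expect to invoke the general principle that amenability of $G \acts B$ implies amenability of the induced $G$-action on $B \rtimes_\alpha H$ for any commuting (amenable, e.g.\ abelian) locally compact group $H$ acting by $\alpha$; this is where $H=\IR$ enters, and $\IR$ being abelian (hence amenable) is exactly what makes the crossed product harmless for amenability.

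The main obstacle I anticipate is the precise bookkeeping in the crossed-product step: one must confirm that the relevant amenability-preservation result applies in the \emph{reduced} crossed product by a \emph{non-discrete} group $\IR$, and that the two commuting actions $\Theta$ and $\gamma_\fx$ genuinely assemble into an action of $G\times\IR$ whose restriction to $G$ is $\Xi$. Concretely I would argue that $\Theta$ and $\gamma_\fx$ combine to an action $\Theta\times\gamma_\fx\colon G\times\IR\acts\T$, that $G\times\IR\acts\T$ is amenable because amenability of group actions is stable under taking a product with an amenable group action (the $\IR$-flow $\gamma_\fx$ contributing only an amenable factor), and that $\Xi$ is then the action on $(\T)\rtimes_{\gamma_\fx}\IR$ obtained by restricting the amenable $G\times\IR$-action and passing to the crossed product over the $\IR$-factor. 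Since passing from an amenable $G\times H$-action to the induced $G$-action on the crossed product by the normal amenable subgroup $H$ preserves amenability, this delivers the amenability of $\Xi$. The delicate verification is that these identifications are $G$-equivariant at the level of the reduced (not full) crossed product, which I would check on the dense subalgebra $C_c(\IR,\T)$ using the explicit formula for $\Xi$ given above.
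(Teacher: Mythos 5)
Your argument for the central implication --- amenability of $\theta$ implies amenability of $\Xi \colon G \acts \T\rtimes_{\gamma_\fx}\IR$ --- rests entirely on the ``general principle'' that if a $G\times H$-action on $B$ is amenable and $H$ is amenable, then the induced $G$-action on the reduced crossed product $B\rtimes H$ is amenable. This is precisely the point that requires proof, and you neither prove it nor can it be quoted from the literature in the generality needed here: $H=\IR$ is non-discrete, the crossed product is reduced, and $B=\T$ carries no nuclearity or exactness hypotheses. (Note, for instance, that the standard approximation-type characterizations of amenability involve positive type functions valued in $Z(B'')$, and there is no natural map from $Z(B'')$ into $Z((B\rtimes H)'')$, since central elements of $B''$ need not commute with the unitaries implementing $H$; so the data witnessing amenability of $G\times\IR\acts\T$ does not transfer in any evident way to $\T\rtimes_{\gamma_\fx}\IR$.) The only tool of this kind available in the paper's framework is Theorem 5.1 of \cite{OS}, which handles commuting \emph{compact} group actions via fixed-point algebras, and the paper's proof is structured exactly so as to avoid your principle: it applies Theorem 5.1 of \cite{OS} to the circle action $\tilde{\sigma}\colon \IT\acts\T\rtimes_{\gamma_\fx}\IR$ induced by the gauge action, reducing the problem to amenability of $G\acts\T^{\sigma}\rtimes_{\gamma_\fx}\IR$; this algebra is the closed increasing union of the $B_{\leq n}\rtimes_{\gamma_\fx}\IR$, and since $\gamma_\fx$ is \emph{inner} on each $B_n$ (implemented by $u_{\fx}^{\otimes n}$), there are $G$-equivariant isomorphisms $B_n\rtimes_{\gamma_\fx}\IR\cong\IK(\ell^2(\IN^n))\otimes A\otimes\Cso(\IR)$ with $G$ acting by $\id\otimes\theta\otimes\id$; amenability then follows from permanence under $G$-equivariant extensions and inductive limits (Proposition 3.7 of \cite{OS}). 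Unless you prove your crossed-product permanence principle (which would be a nontrivial result of independent interest), the ``if'' direction for $\Xi$ has a genuine gap; the same remark applies, less severely, to your unproved claim that amenability of the restriction to a normal subgroup together with amenability of the quotient group forces amenability of the $G\times\IR$-action.

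There are two smaller problems. First, your justification of the ``only if'' direction --- that amenability passes to $G$-invariant \Cs-subalgebras --- is false as a general principle: if $G$ is a non-amenable exact group acting amenably on a compact space $X$, then $\IC\subset C(X)$ is a unital (hence non-degenerate) invariant subalgebra carrying the trivial, therefore non-amenable, $G$-action. What is true, and what the paper uses, is that amenability passes to non-degenerate invariant subalgebras admitting a $G$-equivariant conditional expectation (Proposition 3.7 of \cite{OS}); here the canonical expectation $E_A\colon\T\rightarrow A$, respectively the induced expectation $\T\rtimes_{\gamma_\fx}\IR\rightarrow A\otimes\Cso(\IR)$, supplies exactly this. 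Second, the Pimsner amenability-preservation statement you attribute to Theorem 5.1 of \cite{OS} is Theorem 6.1 there, and even that theorem does not apply verbatim to the correspondence $\E$ (whose left action is $\bigoplus_{n}\rho_n$ rather than a representation on a Hilbert space); the paper accordingly invokes the \emph{proof} of Theorem 6.1 of \cite{OS}, or alternatively runs the gauge-action argument sketched above directly for $\T$.
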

\begin{proof}
We only prove the statement for $\Xi$.
The statement for $\Theta$ follows from the proof of Theorem 6.1 in \cite{OS} (or the proof below).

($\Longrightarrow$): Obviously, the canonical inclusion $A \subset \T$ and the canonical conditional expectation $E_A \colon \T \rightarrow A$ are $G$-equivariant.
Hence we have a $G$-equivariant inclusion $A \otimes \Cso(\IR) \subset \T \rtimes_{\gamma_\fx}\IR$ with a $G$-equivariant conditional expectation.
Clearly the action $\theta \otimes \id_{\Cso(\IR)}\colon G \acts A\otimes \Cso(\IR)$ is amenable if and only if $\theta$ is amenable.
Thus the amenability of $\Xi$ implies that of $\theta$ by Proposition 3.7 of \cite{OS}.

($\Longleftarrow$):
Assume that $\theta$ is amenable.
Let $\tilde{\sigma} \colon \IT \acts \T \rtimes_{\gamma_\fx} \IR$ denote the action
induced from the gauge action.
To show the amenability of $\Xi$,
by Theorem 5.1 of \cite{OS},
 it suffices to show
the amenability of the subsystem
\[G\acts (\T \rtimes_{\gamma_\fx} \IR)^{\tilde{\sigma}}=E^{\tilde{\sigma}}(\T \rtimes_{\gamma_\fx} \IR) = \T^{\sigma} \rtimes_{\gamma_\fx} \IR.\]

Observe that for any $n\in \IN$,
the $\ast$-isomorphism
\[B_n \rtimes_{\gamma_\fx} \IR \cong \IK(\ell^2(\IN^n)) \otimes A \otimes \Cso(\IR)\]
in the proof of Lemma \ref{Lem:idBn}
is $G$-equivariant
when we equip $\IK(\ell^2(\IN^n)) \otimes A \otimes \Cso(\IR)$ with the $G$-action $\id_{\IK(\ell^2(\IN^n))}\otimes \theta \otimes \id_{{\rm C}^\ast(\IR)}$.
Therefore the action
$G \curvearrowright B_n \rtimes_{\gamma_\fx} \IR$
is amenable for all $n\in \IN$.
Observe that $(\T \rtimes_{\gamma_\fx} \IR)^{\tilde{\sigma}}$
is the closure of the increasing union $\bigcup_{n \in \IN} (B_{\leq n} \rtimes_{\gamma_\fx} \IR)$.
The entries of the union fulfil the canonical short exact sequences
\[\{0\} \rightarrow B_{n+1} \rtimes_{\gamma_\fx} \IR \rightarrow B_{\leq n+1} \rtimes_{\gamma_\fx} \IR \rightarrow B_{\leq n} \rtimes_{\gamma_\fx} \IR\rightarrow \{0\},\]
which are clearly $G$-equivariant.
Thus Proposition 3.7 of \cite{OS} implies the amenability of $G\acts (\T \rtimes_{\gamma_\fx} \IR)^{\tilde{\sigma}}$.
\end{proof}

Similar to the infinite rank case, in the finite rank case (Section \ref{section:fr}), under the additional assumption that
the members of $\fu$ are $G$-equivariant, we have induced actions
\[\bar{\Theta}\colon G \acts \C,\quad \bar{\Xi}\colon G \acts \C\rtimes_{\bar{\gamma}_\fy} \IR.\]

Amenability in the finite rank case is essentially the same as the infinite rank case (Theorem \ref{Thm:ame}).
We therefore omit a proof, and only record the statement.
\begin{Thm}
The actions
\[\bar{\Theta}\colon G \acts \C,\quad \bar{\Xi}\colon G \acts \C \rtimes_{\bar{\gamma}_\fy} \IR\]
are amenable if the original action $\theta \colon G \acts A$
is amenable.
\end{Thm}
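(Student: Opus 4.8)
The plan is to mirror the proof of Theorem~\ref{Thm:ame}, using the same reduction to the gauge fixed point algebra together with the permanence properties of amenability from \cite{OS}. I would prove the assertion for $\bar{\Xi}$; the case of $\bar{\Theta}$ then follows from the identical argument with $\C$ in place of $\C \rtimes_{\bar{\gamma}_\fy} \IR$ (that is, by suppressing the tensor factor $\Cso(\IR)$ throughout), just as the statement for $\Theta$ was deduced from that for $\Xi$ in Theorem~\ref{Thm:ame}.

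So, assuming $\theta$ amenable, I would first introduce the action $\tilde{\sigma} \colon \IT \acts \C \rtimes_{\bar{\gamma}_\fy} \IR$ induced from the gauge action $\bar{\sigma}$, given by $(\tilde{\sigma}_z(f))(t) := \bar{\sigma}_z(f(t))$; this is well defined because $\bar{\sigma}$ commutes with $\bar{\gamma}_\fy$, and $\bar{\Xi}$ is $\tilde{\sigma}$-equivariant because $\bar{\Theta}_g(S_{\delta_s\otimes a}) = S_{\delta_s\otimes\theta_g(a)}$ commutes with both $\bar{\sigma}$ and $\bar{\gamma}_\fy$. By Theorem 5.1 of \cite{OS}, it then suffices to prove amenability of the subsystem
\[G \acts (\C \rtimes_{\bar{\gamma}_\fy} \IR)^{\tilde{\sigma}} = E^{\tilde{\sigma}}(\C \rtimes_{\bar{\gamma}_\fy} \IR) = \C^{\bar{\sigma}} \rtimes_{\bar{\gamma}_\fy} \IR.\]

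The point at which the finite rank case genuinely diverges from Theorem~\ref{Thm:ame} is the internal structure of this fixed point algebra. Here the Cuntz--Pimsner relation $a = \sum_{s\in S} S_s \upsilon_s(a) S_s^\ast$ forces the inclusions $B_n \subset B_{n+1}$ (as noted in the proof of Lemma~\ref{Lem:giidf}), so that $\C^{\bar{\sigma}}$ is simply the closure of the increasing union $\bigcup_{n\in \IN} B_n$, and hence $\C^{\bar{\sigma}} \rtimes_{\bar{\gamma}_\fy} \IR$ is the inductive limit $\varinjlim_n (B_n \rtimes_{\bar{\gamma}_\fy} \IR)$. This is cleaner than the infinite rank situation, where one instead had to propagate amenability through the nontrivial short exact sequences relating the $B_{\leq n}$. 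Next, exactly as in the proof of Lemma~\ref{Lem:idBn}, the restriction of $\bar{\gamma}_\fy$ to $B_n$ is inner (implemented by the image of $u_{\fy}^{\otimes n}$ under the canonical isomorphism $B_n \cong \IK(\ell^2(S^n)) \otimes A$), which yields a $G$-equivariant $\ast$-isomorphism
\[B_n \rtimes_{\bar{\gamma}_\fy} \IR \cong \IK(\ell^2(S^n)) \otimes A \otimes \Cso(\IR),\]
the right-hand side carrying the action $\id_{\IK(\ell^2(S^n))} \otimes \theta \otimes \id_{\Cso(\IR)}$, which is amenable since $\theta$ is. I would then invoke Proposition 3.7 of \cite{OS} to pass amenability through this inductive limit, completing the argument.

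I do not expect a serious obstacle, since the paper itself signals that the argument is essentially that of Theorem~\ref{Thm:ame}. The two places demanding care are the verification that the identifications $(\C \rtimes_{\bar{\gamma}_\fy} \IR)^{\tilde{\sigma}} = \C^{\bar{\sigma}} \rtimes_{\bar{\gamma}_\fy} \IR$ and $B_n \rtimes_{\bar{\gamma}_\fy} \IR \cong \IK(\ell^2(S^n)) \otimes A \otimes \Cso(\IR)$ are genuinely $G$-equivariant, and the confirmation that the permanence result cited indeed covers inductive limits with equivariant connecting $\ast$-homomorphisms, not merely extensions; both are routine given that $\bar{\Theta}$ is defined on generators and respects the $\IZ$-grading defining the $B_n$.
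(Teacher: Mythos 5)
Your proposal is correct and is essentially the proof the paper intends: the paper omits it precisely because it is the argument of Theorem~\ref{Thm:ame} transported to the finite rank setting, which is what you carry out. Your one genuine adjustment --- that the relation $a=\sum_{s\in S}S_s\upsilon_s(a)S_s^\ast$ makes $\C^{\bar{\sigma}}\rtimes_{\bar{\gamma}_\fy}\IR$ a plain inductive limit of the $G$-equivariantly amenable algebras $B_n\rtimes_{\bar{\gamma}_\fy}\IR\cong\IK(\ell^2(S^n))\otimes A\otimes\Cso(\IR)$, so that the short-exact-sequence step for the $B_{\leq n}$ is not needed --- is exactly right, and Proposition 3.7 of \cite{OS} does cover equivariant inductive limits as required.
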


We end this section by recording the following advantage of our present construction.
For a $G$-\Cs-algebra $A$,
we regard the suspension $SA:=C_0(\IR)\otimes A$
as the $G$-\Cs-algebra by equipping with the $G$-action $\id_{C_0(\IR)} \otimes \theta$.
\begin{Thm}\label{Thm:Kas}
Let $A$ be a separable $G$-\Cs-algebra.
Then for any sequence $\fr$ of non-degenerate $G$-equivariant $\ast$-endomorphisms on $A$
and any $\fx\in \IR^\IN$,
the $G$-\Cs-algebras $\T$, $\T \rtimes_{\gamma_\fx} \IR$
are ${\rm KK}^G$-equivalent to $A$, $SA$ respectively.
\end{Thm}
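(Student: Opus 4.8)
The plan is to obtain both $\KK^G$-equivalences from two standard tools: the equivariant Pimsner theorem identifying a Toeplitz--Pimsner algebra with its coefficient algebra in $\KK^G$, and the equivariant Connes--Thom isomorphism. The first yields $\T \simeq_{\KK^G} A$ at once; the second reduces the crossed product to the suspension $S\T := C_0(\IR) \otimes \T$, after which the first equivalence is applied a second time.

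For the equivalence $\T \simeq_{\KK^G} A$, I would first record that $\E$ is a genuine $G$-\Cs-correspondence over $A$: the three compatibility identities for $\check{\theta} = \id_{\ell^2(\IN)} \otimes \theta$ (verified in Section~\ref{section:equiv}) hold, and the left action $\pi_\fr = \bigoplus_{n} \rho_n$ is $G$-equivariant because each $\rho_n$ is. Moreover $\pi_\fr$ is isometric by condition $(\diamondsuit 1)$, hence faithful. Pimsner's theorem that the canonical inclusion $A \hookrightarrow \T$ is a $\KK$-equivalence \cite{Pim}, together with its $G$-equivariant refinement due to Meyer \cite{Mey}, then shows that the class $[\iota] \in \KK^G(A, \T)$ of the inclusion is invertible. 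This is exactly the assertion, quoted in the Introduction, that the Toeplitz--Pimsner construction preserves the equivariant Kasparov class.

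For the crossed product, I would use that $\gamma_\fx$ commutes with $\Theta$, so $G$ acts on the flow $(\T, \gamma_\fx)$ by automorphisms of the dynamical system and $\Xi$ is the induced action on $\T \rtimes_{\gamma_\fx} \IR$. The Connes--Thom isomorphism \cite{Con} (in its $\KK$-theoretic, Fack--Skandalis formulation) supplies an invertible class in $\KK^1(\T, \T \rtimes_{\gamma_\fx} \IR)$, equivalently a $\KK$-equivalence $\T \rtimes_{\gamma_\fx} \IR \simeq_{\KK} S\T$. Since this class is constructed naturally from the flow data, it is invariant under the commuting $G$-action and the equivalence is in fact $\KK^G$ (for $\Xi$ on one side and $\id_{C_0(\IR)} \otimes \Theta$ on the other), giving $\T \rtimes_{\gamma_\fx} \IR \simeq_{\KK^G} S\T$. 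Finally, tensoring the equivalence $\T \simeq_{\KK^G} A$ with the trivial-$G$ algebra $C_0(\IR)$ preserves $\KK^G$-equivalence, so $S\T \simeq_{\KK^G} SA$; composing the two yields $\T \rtimes_{\gamma_\fx} \IR \simeq_{\KK^G} SA$.

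The hard part is entirely the equivariance. Each ingredient is classical non-equivariantly, and the whole difficulty is to ensure both equivalences live in $\KK^G$ rather than merely in $\KK$. For the Pimsner step this is precisely the content of Meyer's equivariant version; for the Connes--Thom step it rests on the functoriality of the Fack--Skandalis invertible element under the commuting $G$-action, so I would phrase that step as an isomorphism of $\KK^G$-functors implemented by a $G$-invariant class, rather than through any non-canonical choice. The remaining points --- the correspondence axioms, the faithfulness of $\pi_\fr$, and the stability of $\KK^G$-equivalence under suspension --- are routine.
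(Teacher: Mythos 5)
Your proposal is correct and follows essentially the same route as the paper: the equivalence $\T \simeq_{\mathrm{KK}^G} A$ via the equivariant form of Pimsner's theorem (Meyer), and the equivalence for the crossed product via the observation that the Fack--Skandalis proof of Connes's Thom isomorphism works equivariantly for the commuting $G$-action, followed by suspending the first equivalence. The paper's proof is exactly this two-step sketch (with an added parenthetical alternative via the strong Baum--Connes property of $\IR$), so no further comparison is needed.
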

\begin{proof}
The statement for $\T$ follows because the proof of Pimsner's KK-equivalence theorem \cite{Pim}
works in the equivariant setting (see \cite{Mey}).

Similarly, the latter statement follows because the proof of Connes's Thom isomorphism theorem in \cite{FS}
works in the equivariant setting.
(This is a special case of the strong Baum--Connes conjecture \cite{MN} (cf.~Proposition 4.14 of \cite{Sza}),
because any flow on a \Cs-algebra is homotopy equivalent to the trivial flow.)
\end{proof}

\section{Examples and applications}
We close this article by giving a few concrete applications of our main theorems.
We believe that these results have their own importance, and furthermore, the proofs presented below illustrate flexibility of our new constructions.
We note that some ingredients of the constructions below are the same as ingredients of the construction in \cite{Suzsf} in the finitely generated discrete group case.

The next theorem generalizes and enhances the main theorem of \cite{Suzsf} (Theorem A).
Let $\mathcal{Z}_0$ denote the stably projectionless analogue of the Jiang--Su algebra \cite{GL}.
Note that for any $c \in (0, \infty)$, there is a $\ast$-automorphism $\alpha$ on $\cZ_0 \otimes \IK$
with $\tau_0 \circ \alpha =c \cdot \tau_0$.
Here $\tau_0$ denotes a tracial weight on $\cZ_0 \otimes \IK$, which is unique up to scalar multiple.
\begin{Thm}\label{Thm:Exm}
For any locally compact second countable group $G$,
there is an amenable action $G\acts A$ on a classifiable simple stably projectionless \Cs-algebra $A$
which is ${\rm KK}^G$-equivalent to the left translation action $G \acts C_0(G)$.
\end{Thm}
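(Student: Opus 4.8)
The plan is to realize the required algebra as a reduced crossed product $\T\rtimes_{\gamma_\fx}\IR$ of the infinite rank construction of Section~\ref{section:corr}, applied to a carefully chosen coefficient. Concretely, I would take
\[A:=C_0(\IR)\otimes C_0(G)\otimes(\cZ_0\otimes\IK)\otimes\cZ^{\otimes\IF_\infty},\]
with the $G$-action that is left translation on the $C_0(G)$ factor and trivial on every other factor. Since $\cZ_0\otimes\IK$ and $\cZ^{\otimes\IF_\infty}\cong\cZ$ are $\KK$-equivalent to $\IC$ and $C_0(\IR)=S\IC$, this $A$ is $\KK^G$-equivalent to $SC_0(G)$ with the translation action (the extra $C_0(\IR)$ is inserted to absorb the suspension appearing in Theorem~\ref{Thm:Kas}). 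I would then define the sequence $\fr=(\rho_n)_{n\in\IN}$ of $G$-equivariant $\ast$-automorphisms by $\rho_n:=\ell_{a_n}\otimes r_{g_n}\otimes\alpha_{c_n}\otimes\sigma_{s_n}$, where $\ell_{a_n}$ is translation by $a_n\in\IR$ on $C_0(\IR)$, $r_{g_n}$ is right translation by $g_n\in G$ on $C_0(G)$ (which commutes with the left translation action, hence is $G$-equivariant), $\alpha_{c_n}$ is a trace-scaling automorphism of $\cZ_0\otimes\IK$ with $\tau_0\circ\alpha_{c_n}=c_n\tau_0$, and $\sigma_{s_n}$ is the Bernoulli shift by a free basis element $s_n$ of $\IF_\infty$. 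The division of labour is: $C_0(G)$ supplies both the amenability of the action and the target $\KK^G$-class; the translations enforce the minimality conditions; $\cZ_0\otimes\IK$ supplies an arbitrarily adjustable trace scaling; and, by the analogue of Theorem~\ref{Thm:Ttrace} and Remark~\ref{Rem:Ttrace}, the Bernoulli factor $\cZ^{\otimes\IF_\infty}$ supplies $\cZ$-stability while leaving the trace space dynamics and the $\KK^G$-class unchanged.

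Next I would check the standing hypotheses. Each $\rho_n$ is an automorphism, hence non-degenerate and isometric, so $(\diamondsuit 1)$ is immediate. As $\cZ_0\otimes\IK$ and $\cZ^{\otimes\IF_\infty}$ are simple, $\mathrm{Prim}(A)\cong\IR\times G$ and the dual maps of the $\rho_n$ act by the translations $(t,x)\mapsto(t-a_n,xg_n^{-1})$; choosing $(a_n)_n$ and $(g_n)_n$ so that they generate a dense subsemigroup of $\IR\times G$ yields $(\diamondsuit 2)$, whence $\T$ is simple by Theorem~\ref{Thm:Tsimple}. For simplicity of the crossed product I would invoke condition $(2)$ of Theorem~\ref{Thm:Rsimple} in the minimality form of the Remark following it: taking in addition $\fx=(x_n)_n$ linearly independent over $\IQ$ (so that $K_\fx=\IT^\IN$ and condition $(1)$ holds, cf.\ Example~\ref{Exm:dense}) and so that $\{(a_\n,g_\n,x_\n):\n\in\fN\}$ is dense in $\IR\times G\times\IR$, the induced semigroup on $\mathrm{Prim}(A)\times\IR$ is minimal, and Theorem~\ref{Thm:Rsimple} gives simplicity of $\T\rtimes_{\gamma_\fx}\IR$.

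For stable projectionlessness I would exhibit a faithful KMS weight at a nonzero inverse temperature. Let $\tau$ be the faithful tracial weight on $A$ given by the tensor product of Lebesgue measure, a left Haar weight, $\tau_0$, and the trace on $\cZ^{\otimes\IF_\infty}$; then $\tau\circ\rho_n=d_n\tau$ with $d_n=c_n\Delta(g_n)^{-1}$, $\Delta$ the modular function. Fixing $\beta=1$ and the density-generating $x_n$, I may choose the $c_n$ so small that $\sum_{n\in\IN}e^{-\beta x_n}d_n\le 1$, i.e.\ $\tau$ satisfies the hypothesis of Theorem~\ref{Thm:KMSc}; the corresponding $\gamma_\fx$-$\beta$-KMS weight on $\T$ is faithful by simplicity, so Remark~\ref{Rem:spl} shows $\T\rtimes_{\gamma_\fx}\IR$ is stably projectionless. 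Amenability of $\Xi\colon G\acts\T\rtimes_{\gamma_\fx}\IR$ follows from Theorem~\ref{Thm:ame}, since the coefficient action is amenable: the left translation action $G\acts C_0(G)$ is topologically amenable, and $A$, being a $C_0(G)$-algebra over the amenable $G$-space $G$ with the remaining factors carrying the trivial action, inherits amenability. Nuclearity, separability, and the UCT are clear (the coefficient lies in the bootstrap class and both the Pimsner construction and the crossed product by $\IR$ preserve it), and $\cZ$-stability comes from the $\cZ^{\otimes\IF_\infty}$ factor; hence the classification theorem for simple separable nuclear $\cZ$-stable stably projectionless \Cs-algebras in the UCT class (see \cite{GLN} and the references in the Introduction) applies, so $A':=\T\rtimes_{\gamma_\fx}\IR$ is classifiable. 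Finally, Theorem~\ref{Thm:Kas} and Bott periodicity in $\KK^G$ give $A'\simeq_{\KK^G}SA\simeq_{\KK^G}S^2C_0(G)\simeq_{\KK^G}C_0(G)$, the left translation system, as required.

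The main obstacle I anticipate is the tension between the two driving requirements: minimality, needed for simplicity of the crossed product, forces the flow parameters $x_\n$ to spread densely over $\IR$ (including large values of either sign), whereas the inequality of Theorem~\ref{Thm:KMSc} at $\beta\neq 0$, needed for stable projectionlessness, demands summability of $\sum_n d_n e^{-\beta x_n}$; these pull in opposite directions. The device that reconciles them is precisely the existence of trace-scaling automorphisms of $\cZ_0\otimes\IK$ for \emph{every} real factor $c\in(0,\infty)$, which lets me shrink each $d_n$ independently of the already-fixed $x_n$. A secondary point to verify carefully is that $\cZ$-stability genuinely survives the passage from $A$ through $\T$ to $\T\rtimes_{\gamma_\fx}\IR$; I would address this through the Bernoulli factor together with the fact, recorded in Remark~\ref{Rem:Ttrace}, that tensoring the coefficient by $\cZ^{\otimes\IF_\infty}\cong\cZ$ affects neither the trace space dynamics nor the $\KK^G$-class.
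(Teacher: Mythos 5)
Your construction is, in its skeleton, the same as the paper's: the coefficient is $C_0(G\times\IR)$ tensored with $\cZ_0\otimes\IK$, the family $\fr$ is built from right translations and trace-scaling automorphisms, and the chain of citations (Theorems \ref{Thm:Tsimple}, \ref{Thm:Rsimple}, \ref{Thm:KMSc}, \ref{Thm:ame}, \ref{Thm:Kas}, Remark \ref{Rem:spl}, then Bott periodicity) is identical. Your variation in the simplicity step is fine: instead of the paper's device of fixing $g_0=g_1=(e,0)$, $c_0+c_1<1$, $x_0x_1<0$ and invoking Examples \ref{Exm:dense} and \ref{Exm:KMST}, you choose $(a_n,g_n,x_n)$ generating a dense subsemigroup of $\IR\times G\times\IR$ with $\fx$ rationally independent, verify the minimality form $(2')$ of the hypothesis of Theorem \ref{Thm:Rsimple}, and only afterwards shrink the $c_n$ to obtain the inequality $(\clubsuit\mathchar`-\beta)$; the order of quantifiers is consistent because the trace-scaling constants are free parameters once $\fx$ is fixed.

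There is, however, one genuine gap: the classifiability step. You assert that ``$\cZ$-stability comes from the $\cZ^{\otimes\IF_\infty}$ factor'' placed inside the coefficient and twisted by the Bernoulli shifts. This does not follow. Since the left action of $A$ on $\E$ moves the tensor factors of $\cZ^{\otimes\IF_\infty}$ around, the Pimsner algebra $\T$ (and hence $\T\rtimes_{\gamma_\fx}\IR$) is not of the form $B\otimes\cZ$ for any visible $B$, and no result in the paper upgrades a $\cZ$-stable coefficient to a $\cZ$-stable Pimsner algebra. Remark \ref{Rem:Ttrace}(2), which you cite for this, asserts only that tensoring the \emph{coefficient} with $\cZ^{\otimes\IF_\infty}$ changes neither the trace-space dynamics nor the (equivariant) Kasparov class; it makes no claim that the resulting $\T$ becomes $\cZ$-stable. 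Indeed, in the paper the Bernoulli factor serves a different purpose altogether: via Theorem \ref{Thm:Ttrace} it forces all tracial weights on $\T$ to be gauge invariant, which is what the crossed product decomposition theorems in Section 8 need; it is not used in the proof of this theorem at all. The paper closes the classifiability issue with a step your proposal is missing: after constructing $\Xi\colon G\acts\T\rtimes_{\gamma_\fx}\IR$, it passes to the diagonal action of $\Xi$ with the \emph{trivial} action on $\cZ_0$, i.e.\ it tensors the finished crossed product (not the coefficient) with $\cZ_0$. Since $\cZ_0$ is KK-equivalent to $\IC$ and carries the trivial action, this does not disturb the ${\rm KK}^G$-class, simplicity, amenability, or stable projectionlessness, while it manifestly yields a $\cZ_0$-stable (hence $\cZ$-stable, hence classifiable) algebra. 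Your argument is repaired by exactly this final tensoring, after which the Bernoulli factor in your coefficient can simply be dropped; as written, the $\cZ$-stability assertion is unsupported.
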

\begin{proof}
Take a sequence $(g_n)_{n \in \IN} \subset G\times \IR$ which generates a dense subsemigroup of $G\times \IR$ and satisfies $g_0=g_1=(e, 0)$.
We equip $A:=C_0(G\times \mathbb{R})\otimes \cZ_0 \otimes \IK$ with the left translation $G$-action ${\rm L}\otimes \id_{\cZ_0\otimes \IK}$.
Choose a sequence $(c_n)_{n\in \IN} \in (0, \infty)^\IN$ and a sequence $(\alpha_n)_{n\in \IN}$ of $\ast$-automorphisms on $\cZ_0\otimes \IK$
with
\[c_0 + c_1 < 1, \quad \tau_0 \circ \alpha_n =c_n \cdot \tau_0\quad{\rm~for~each~}n\in \IN.\]
Define $\fr:=({\rm R}_{g_n}\otimes \alpha_n)_{n \in \IN}$,
where ${\rm R}\colon G\times \IR \acts C_0(G\times \IR)$
denotes the right translation action.
Clearly $\fr$ satisfies conditions $(\diamondsuit 1)$ and $(\diamondsuit 2)$ in Section \ref{section:corr}.
It is also clear that the members of $\fr$ are $G$-equivariant.
Let $\tau \colon A_+ \rightarrow [0, \infty]$
denote the tensor product tracial weight of the right Haar measure on $G \times \IR$ and $\tau_0$.
Then one has
$\tau \circ \rho_n = c_n \cdot \tau$ for each $n\in \IN$.
By the observation in Example \ref{Exm:KMST},
for any $\beta \in \IR \setminus\{0\}$, there exists $\fx \in \IR^\IN$
satisfying the inequality ($\clubsuit$-$\beta$) in Theorem \ref{Thm:KMSc} for the present $\tau$
and the assumption of Theorem \ref{Thm:Rsimple}.
For any such $\beta$ and $\fx$,
the induced action
$\Xi\colon G \acts \T\rtimes_{\gamma_\fx} \IR$
gives an amenable action on a simple, separable, nuclear, and stably projectionless \Cs-algebra.
By Theorem \ref{Thm:Kas},
$\Xi$ is KK$^G$-equivalent to $C_0(G\times \IR^2)$.
Hence it is in fact KK$^G$-equivalent to $C_0(G)$ by the Bott periodicity.
In particular $\T \rtimes_{\gamma_\fx} \IR$ satisfies the universal coefficient theorem \cite{RS}.

Now the diagonal action of $\Xi$ and the trivial action on $\mathcal{Z}_0$
gives the desired action.
\end{proof}
\begin{Rem}\label{Rem:Exm}
In the above proof, by replacing $G\times \IR$ by $G$ and by requiring the sequence $(c_n)_{n\in \IN} \in (0, \infty)^\IN$ to satisfy
$\sum_{n\in \IN} c_n \leq 1$ instead of $c_0 + c_1 <1$, one concludes that
the induced action $G \acts \T\otimes \cZ_0$ also gives the desired action.
\end{Rem}

The next theorem may be seen as a stably projectionless analogue of the purely infinite result, Corollary 6.4 in \cite{BO}, in the free group case
(which was originally shown for the free groups in the proof of Theorem 5.1 in \cite{SuzCMP} based on the construction in \cite{Suzeq}).
This means that, at least on the level of equivariant Kasparov theory,
there is no obstruction on amenable actions of free groups on classifiable simple stably projectionless \Cs-algebras.
 \begin{Thm}
 Any countable free group $\Gamma$
 admits an amenable action on a classifiable simple stably projectionless \Cs-algebra
 which is ${\rm KK}^\Gamma$-equivalent to $\IC$. 
 \end{Thm}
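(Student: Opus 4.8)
The plan is to re-run the construction of Theorem~\ref{Thm:Exm}, but to feed in a coefficient $\Gamma$-\Cs-algebra whose equivariant $\mathrm{KK}$-class is $\IC$ rather than $C_0(\Gamma)$. The feature of Theorem~\ref{Thm:Exm} responsible for the class $C_0(\Gamma)$ was the use of $C_0(\Gamma)$ with its commuting left and right translations; since I now want the class $\IC$, I would replace the group $\Gamma=\IF_S$ by its Bass--Serre tree $T$ (the $2|S|$-regular tree). The group acts freely and properly on $T$, so $C_0(T)$ is a separable commutative $\Gamma$-\Cs-algebra with a proper --- hence amenable --- action, and because $\Gamma$ is $a$-T-menable (its action on the tree realises Kasparov's $\gamma$-element as $1$), $C_0(T)$ is ${\rm KK}^\Gamma$-equivalent to $\IC$ in even degree. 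This is the stably projectionless counterpart of the purely infinite free-group statement recalled just before the theorem (cf.\ \cite{SuzCMP}, \cite{BO}).

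Concretely, I would set $A:=C_0(T)\otimes C_0(\IR)\otimes\cZ_0\otimes\IK$, with $\Gamma$ acting only on the $C_0(T)$ factor, and build a sequence $\fr=(\rho_n)_{n\in\IN}$ of $\Gamma$-equivariant non-degenerate $\ast$-endomorphisms of the shape $\rho_n=\phi_n^\ast\otimes\mathrm{R}_{t_n}\otimes\alpha_n$. Here the $\alpha_n$ are the trace-scaling automorphisms of $\cZ_0\otimes\IK$ used in Theorem~\ref{Thm:Exm}, the $\mathrm{R}_{t_n}$ are translations of the $C_0(\IR)$ factor, and $\phi_n\colon T\to T$ are $\Gamma$-equivariant proper self-maps, equivalently lifts of continuous self-maps of the finite quotient graph $T/\Gamma$ (a rose with $|S|$ petals). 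Exactly as in Theorem~\ref{Thm:Exm} I would take $\phi_0=\phi_1=\id_T$, $t_0=t_1=0$, and arrange the scaling constants $c_n$ of $\alpha_n$ so that $c_0+c_1<1$; this secures $(\diamondsuit1)$ (as $\rho_0$ is isometric) together with the hypotheses of Examples~\ref{Exm:dense} and \ref{Exm:KMST}, while the remaining $\phi_n$ (chosen irreducible and expanding on the rose) and a dense set of translation parameters $t_n$ make the semigroup generated by $\fr$ act minimally on $T\times\IR$, yielding $(\diamondsuit2)$ and condition $(2)$ of Theorem~\ref{Thm:Rsimple}. Taking $\tau$ to be the tensor product of the $\Gamma$-invariant length measure on $T$, Lebesgue measure, and $\tau_0$, one obtains $\tau\circ\rho_n=c_n\tau$ for the appropriate $c_n$.

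Granting this input, the conclusion follows as in Theorem~\ref{Thm:Exm}: the induced action $\Xi\colon\Gamma\acts\T\rtimes_{\gamma_\fx}\IR$ is amenable by Theorem~\ref{Thm:ame}; it is simple and stably projectionless by Theorems~\ref{Thm:Rsimple} and \ref{Thm:KMSc}, Example~\ref{Exm:KMST}, and Remark~\ref{Rem:spl} (the condition $c_0+c_1<1$ forces a faithful KMS weight for some $\beta\neq0$); and by Theorem~\ref{Thm:Kas} with Bott periodicity it is ${\rm KK}^\Gamma$-equivalent to $S(A)=C_0(T)\otimes C_0(\IR^2)\otimes\cZ_0\otimes\IK$, which is ${\rm KK}^\Gamma$-equivalent to $C_0(T)$, hence to $\IC$; the even parity of the tree class is exactly what guarantees that the single free suspension coming from the $\IR$-crossed product is absorbed and we land on $\IC$ and not on a suspension of it. Nuclearity and the UCT pass through as in Theorem~\ref{Thm:Exm}, so the algebra is classifiable, and tensoring by $\cZ_0$ if needed (as in Remark~\ref{Rem:Exm}) gives the stated action.

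The main obstacle I expect lies in the second paragraph: producing $\Gamma$-equivariant proper self-maps $\phi_n$ of $T$ whose generated semigroup is minimal (no invariant proper open subset of $T$) while scaling the invariant length measure by prescribed factors. The difficulty is that the natural equivariant maps --- the right translations that worked so cleanly on $C_0(\Gamma)$ --- preserve the vertex/edge stratification of $T$ and therefore leave the ideal $C_0(\mathring{E})$ invariant, so they fail $(\diamondsuit2)$; one genuinely needs non-simplicial, stratum-mixing self-maps of the quotient rose, and must simultaneously control their measure-scaling. Verifying $C_0(T)\sim_{{\rm KK}^\Gamma}\IC$ in the correct (even) degree is a secondary point to pin down, though this is classical ($a$-T-menability, or $\gamma=1$ for groups acting on trees).
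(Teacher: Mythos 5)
Your reduction to the machinery of Theorem \ref{Thm:Exm} is organized correctly, but the input you feed into it is wrong, and the point you defer as ``secondary\dots classical'' is in fact false and fatal. The claim that $C_0(T)$ is ${\rm KK}^\Gamma$-equivalent to $\IC$ cannot hold: ${\rm KK}^\Gamma$-equivalence passes to plain ${\rm KK}$-equivalence under the forgetful functor, and $C_0(T)$ does not even have the $K$-theory of a point. Indeed, from the equivariant extension $0 \rightarrow c_0(E)\otimes C_0((0,1)) \rightarrow C_0(T) \rightarrow c_0(V) \rightarrow 0$ (vertices $V$, open edges $E$) one computes $K_0(C_0(T))\cong H^0_c(T;\IZ)=0$ (as $T$ is connected and non-compact), whereas $K_0(\IC)=\IZ$; moreover $K_1(C_0(T))\cong H^1_c(T;\IZ)$, which for the $2|S|$-regular tree with $|S|\geq 2$ has infinite rank (it is the reduced \v{C}ech $H^0$ of the Cantor set of ends), and already for $\Gamma=\IZ$, $T=\IR$ one gets $S\IC$ rather than $\IC$. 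What is classical for groups acting on trees (Julg--Valette, and Higson--Kasparov \cite{HK} for a-T-menable groups) is that the $\gamma$-element in ${\rm KK}^\Gamma(\IC,\IC)$ equals $1$; the Dirac/dual-Dirac elements there are built from a different proper algebra (essentially from $\ell^2(V)\oplus\ell^2(E)$), not from $C_0(T)$, and $\gamma=1$ does not yield $C_0(T)\sim_{{\rm KK}^\Gamma}\IC$. The numerical coincidence $K_*(C_0(T)\rtimes\Gamma)\cong K_*(\rg(\Gamma))$ (Green imprimitivity for the free proper action plus Pimsner--Voiculescu \cite{PV}) is precisely Baum--Connes for $\Gamma$ and does not lift to an equivariant equivalence of the coefficient algebras. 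Consequently your final algebra is ${\rm KK}^\Gamma$-equivalent to $C_0(T)$ tensored with ${\rm KK}$-trivial factors, so its $K_1$ has infinite rank; no choice of the maps $\phi_n$, $t_n$, $\alpha_n$ can repair this. (Quite apart from this, your acknowledged difficulty is real: a strictly $\Gamma$-equivariant proper self-map of $T$ descends to a self-map of the rose inducing the identity on $\pi_1$, which severely constrains the expanding, stratum-mixing behaviour you need; but this is moot given the $K$-theoretic obstruction.)

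The paper's proof avoids trees entirely and instead makes the coefficient space a \emph{contractible group}. Let $G$ be the universal covering group of $\SL(2,\IR)$; by the Iwasawa decomposition $G$ is homeomorphic to $\IR^3$, and $G$ contains every countable free group as a closed subgroup (lift a faithful representation of $\IF_d$ onto a discrete subgroup of $\SL(2,\IR)$). One applies Theorem \ref{Thm:Exm} to the group $G\times\IR$ and restricts the resulting amenable action to $\Gamma< G\times\IR$; amenability survives by Theorem \ref{Thm:ame} since the left translation action of $\Gamma$ on $C_0(G\times \IR)$ is amenable. The step replacing your tree claim is a homotopy argument: since $\Gamma$ is free and $G\times\IR$ is path-connected, the restricted left-translation action $\Gamma\acts C_0(G\times\IR)$ is homotopic to the trivial action (slide each free generator to the identity along a path; freeness means there are no relations to preserve), and for free groups homotopic actions are ${\rm KK}^\Gamma$-equivalent (Proposition 4.14 of \cite{Sza}, resting on \cite{HK} and \cite{MN}). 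The trivial action on $C_0(G\times\IR)\cong C_0(\IR^4)$ is then ${\rm KK}^\Gamma$-equivalent to $\IC$ by ordinary Bott periodicity. This is the idea your proposal is missing: one needs a proper $\Gamma$-space whose function algebra is \emph{equivariantly} ${\rm KK}$-contractible to $\IC$, and a branching tree is not such a space, whereas a contractible group containing $\Gamma$ as a closed subgroup is.
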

 \begin{proof}
Consider the universal covering (Lie) group $G$ of ${\rm SL}(2, \mathbb{R})$.
The Iwasawa decomposition gives a homeomorphism from $G$ onto $\IR^3$.
Thus, by the Bott periodicity, $C_0(G \times \IR)$ is KK-equivalent to $\IC$.
Observe that $G$ contains countable free groups as closed subgroups.
(Indeed any homomorphism lifting of an isomorphism $\mathbb{F}_d \rightarrow {\rm SL}(2, \mathbb{R})$ onto a closed subgroup gives the desired embedding.)
For a given free group $\Gamma$, we fix such an embedding $\Gamma < G$.

We now apply Theorem \ref{Thm:Exm} (or Remark \ref{Rem:Exm}) to $G\times \IR$.
Denote by $\alpha \colon \Gamma \acts A$ the restriction of the resulting action to the closed free subgroup $\Gamma<G \times \IR$.
Then, as $G\times \IR$ is path connected and $\Gamma$ is a free group,
the left translation action $\mathrm{L}|_{\Gamma} \colon \Gamma \acts C_0(G\times \IR)$
is homotopic to the trivial action $\Gamma \acts C_0(G \times \IR)$.
This together with Theorem \ref{Thm:Kas} shows that
the $\Gamma$-\Cs-algebra $(A, \alpha)$
is KK$^\Gamma$-equivalent to $\IC$.
(For a proof of this implication, see e.g., Proposition 4.14 of \cite{Sza},
whose proof in fact works for any locally compact second countable groups with the Haagerup property without non-trivial compact subgroups by \cite{HK} and \cite{MN}.
In particular it works for free groups.)
Since ${\mathrm L}|_\Gamma$ is amenable, so is $\alpha$ by Theorem \ref{Thm:ame}.
Thus $\alpha$ gives the desired action.
 \end{proof}
 We also give concrete examples of amenable actions $G \acts \T$, $G \acts \C$
 such that the \Cs-algebras $\T$ and $\C$ admit ($G$-invariant) tracial weights, by using some ingredients of \cite{Suzsf}.
 \begin{Thm}
 There is a $G$-\Cs-algebra $A$ and a sequence $\fr$ of $G$-$\ast$-endomorphisms on $A$ as in Section \ref{section:corr}
 such that the induced action $G \acts \T$ is amenable and $\T$ admits a tracial weight.
When $G$ is topologically finitely generated,
 one can find a finite family $\fu$ of $G$-$\ast$-endomorphisms on a $G$-\Cs-algebra with the analogous properties.
 \end{Thm}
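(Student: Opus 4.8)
The plan is to reuse the template from the proof of Theorem~\ref{Thm:Exm}: let the coefficient algebra carry the (commutative) regular representation of $G$ together with a trace-scaling simple building block, and let the endomorphisms act by right translation tensored with trace-scaling automorphisms. Concretely, in the infinite rank case I would take $A:=C_0(G)\otimes \cZ_0\otimes \IK$ with the $G$-action $\theta:=\mathrm{L}\otimes \id_{\cZ_0\otimes\IK}$, where $\mathrm{L}$ is the left translation action. I would fix a sequence $(g_n)_{n\in \IN}$ that is dense in $G$ and choose $\ast$-automorphisms $\alpha_n$ of $\cZ_0\otimes\IK$ with $\tau_0\circ\alpha_n=\Delta(g_n)\,2^{-n-1}\,\tau_0$ (these exist by the trace-scaling remark preceding Theorem~\ref{Thm:Exm}; here $\Delta$ is the modular function of $G$ and $\tau_0$ the tracial weight of $\cZ_0\otimes\IK$), and set $\fr:=(\rho_n)_{n\in\IN}:=(\mathrm{R}_{g_n}\otimes\alpha_n)_{n\in\IN}$ with $\mathrm{R}$ the right translation action. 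Each $\rho_n$ is an automorphism, hence non-degenerate and isometric, so $(\diamondsuit 1)$ holds; and each $\rho_n$ commutes with $\theta$ because left and right translations commute, so $\fr$ is $G$-equivariant. Amenability of $\theta$ is immediate, since the left translation action $G\acts C_0(G)$ is proper, hence amenable, and amenability survives tensoring with the trivial action on the nuclear algebra $\cZ_0\otimes\IK$; Theorem~\ref{Thm:ame} then gives amenability of $G\acts\T$.

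Next I would verify $(\diamondsuit 2)$ and exhibit the trace. As $\cZ_0\otimes\IK$ is simple, every ideal of $A$ is of the form $C_0(U)\otimes\cZ_0\otimes\IK$ for an open $U\subseteq G$, and a direct computation ($\mathrm{R}_{g}(C_0(U))=C_0(Ug^{-1})$) shows that $\rho_n$-invariance of such an ideal is equivalent to $Fg_n\subseteq F$ for the closed complement $F:=G\setminus U$. Writing $\Sigma$ for the sub-semigroup of $G$ generated by $\{g_n:n\in\IN\}$, which is dense because $(g_n)$ is, any $x\in F$ satisfies $x\Sigma\subseteq F$, so $G=\overline{x\Sigma}\subseteq F$ by closedness of $F$; thus $F=G$, proving $(\diamondsuit 2)$. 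For the weight, let $\tau$ be the tensor product of the left Haar weight of $G$ with $\tau_0$; it is $\theta$-invariant, and since right translation scales the left Haar weight by $\Delta(g_n)^{-1}$, one gets $\tau\circ\rho_n=\Delta(g_n)^{-1}\Delta(g_n)2^{-n-1}\tau=2^{-n-1}\tau$. Hence $\sum_{n\in\IN}\tau\circ\rho_n\le\tau$ on $\Ped(A)_+$, i.e.\ $\tau$ satisfies the inequality ($\clubsuit$-$0$), and Theorem~\ref{Thm:KMSc} with $\beta=0$ (equivalently the final assertion of Example~\ref{Exm:KMST}) produces a tracial weight $\varphi$ on $\T$. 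Because the Cuntz--Pimsner construction and this weight are built $G$-equivariantly from the $\theta$-invariant $\tau$, the weight $\varphi$ is $G$-invariant.

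For the finite rank case, assume $G$ is topologically finitely generated, say by $k_1,\dots,k_m$. I would index by a finite set $S$ the family $\{k_1,k_1^{-1},\dots,k_m,k_m^{-1}\}$, enlarged by the neutral element if needed so that $|S|\ge 2$, writing $(h_s)_{s\in S}$ for the corresponding elements, and set $\fu:=(\upsilon_s)_{s\in S}:=(\mathrm{R}_{h_s}\otimes\beta_s)_{s\in S}$ on the same $A$. The $h_s$ generate a dense sub-semigroup of $G$ (their semigroup contains the dense subgroup $\langle k_1,\dots,k_m\rangle$), so the same closed-complement argument yields $(\heartsuit)$; each $\upsilon_s$ is an injective, non-degenerate, $G$-equivariant endomorphism, and amenability of $G\acts\C$ follows from $\theta$ exactly as above (via the finite rank analogue of Theorem~\ref{Thm:ame}). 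The one genuinely new constraint is that Theorem~\ref{Thm:KMSc2} demands the \emph{equality} ($\spadesuit$-$0$), namely $\sum_{s\in S}c_s=1$ when $\tau\circ\upsilon_s=c_s\tau$. I would therefore choose the trace-scaling automorphisms $\beta_s$ with $\tau_0\circ\beta_s=\Delta(h_s)\,|S|^{-1}\tau_0$, so that $c_s=\Delta(h_s)^{-1}\Delta(h_s)|S|^{-1}=|S|^{-1}$ and $\sum_{s\in S}c_s=1$; Theorem~\ref{Thm:KMSc2} with $\beta=0$ then delivers a ($G$-invariant) tracial weight on $\C$.

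The hard part is arranging the minimality conditions $(\diamondsuit 2)$, $(\heartsuit)$ simultaneously with the normalization of the trace scalars, rather than any single estimate. The decisive point for minimality is that invariance of an ideal is equivalent to invariance of the \emph{closed} support-complement $F$ under the sub-semigroup generated by the translation parameters, so that density of this sub-semigroup forces $F=G$; this is precisely why the finite rank statement needs $G$ to be topologically finitely generated, as one must generate a dense sub-semigroup from finitely many elements, which is achieved by adjoining inverses. The second delicate point is the rigidity of ($\spadesuit$-$0$): unlike the inequality ($\clubsuit$-$0$) available in the infinite rank case, the equality must be met exactly, and the modular function $\Delta$ unavoidably enters because $\tau$ is chosen $G$-invariant (left Haar) while the endomorphisms act by right translation; both issues are reconciled by absorbing the factor $\Delta(h_s)$ into the trace scaling of $\beta_s$.
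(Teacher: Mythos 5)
Your proposal is correct and follows essentially the same route as the paper: the paper also takes $A = C_0(G)\otimes B$ (with $B$ a simple \Cs-algebra admitting trace-scaling endomorphisms, e.g.\ $\cZ_0\otimes\IK$), sets $\fu = (\mathrm{R}_{g_n}\otimes\rho_n)_n$ for a finite family generating a dense subsemigroup, uses the left-Haar-measure trace tensored with the scaled trace on $B$ so that the modular function $\Delta_G$ is absorbed into the normalization $\sum_n r_n\Delta_G(g_n)=1$, and invokes Theorem \ref{Thm:KMSc2} (resp.\ Theorem \ref{Thm:KMSc}) at $\beta=0$. Your write-up merely makes explicit what the paper leaves implicit (the verification of $(\heartsuit)$ and $(\diamondsuit 2)$ via the closed-complement/dense-subsemigroup argument, the concrete choice $B=\cZ_0\otimes\IK$, and the amenability step via Theorem \ref{Thm:ame}), so there is no substantive difference.
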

 \begin{proof}
We only show the latter statement. The first statement can be shown in a similar way.
Take a finite sequence $(g_n)_{n=1}^N$ in $G$ which generates a dense subsemigroup of $G$ and satisfies $N\geq 2$.
Take a simple \Cs-algebra $B$, a tracial weight $\tau_1$ on $B$,
non-degenerate $\ast$-endomorphisms $\rho_n \colon B \rightarrow B$; $n=1, \ldots, N$, and $r_1, \ldots, r_N\in (0, \infty)$
which satisfy
\[\tau_1\circ \rho_n =r_n\cdot \tau_1\quad {\rm~for~}n=1, \ldots, N, \quad \sum_{n=1}^N r_n \Delta_G(g_n) =1.\]
Here $\Delta_G \colon G \rightarrow (0, \infty)$ denotes the modular function on $G$.
Put $A:=C_0(G) \otimes B$.
We equip $A$ with the left translation $G$-action ${\rm L}\otimes \id_{B}$.
Put $S:=\{1, \ldots, N\}$.
Define $\fu:=({\rm R}_{g_n} \otimes \rho_n)_{n=1}^N$.
Clearly $\fu$ satisfies condition $(\heartsuit)$, and all members of $\fu$ are $G$-equivariant.
Now let $\tau_2$ denote the tracial weight on $C_0(G)$ defined by the left Haar measure.
Then the tracial weight $\tau:=\tau_2 \otimes \tau_1$ on $A$ is $G$-invariant, and
satisfies the equation $(\spadesuit$-$0)$ in Theorem \ref{Thm:KMSc2} (for all $\fy\in \IR^S$).
Hence $\C$ admits a $G$-invariant tracial weight.

 \end{proof}
 
 As another application of our main results Theorems \ref{Thm:Tsimple} and \ref{Thm:KMSc}, we obtain the following crossed product decomposition results
for the stabilizations of $\mathcal{Z}_0$ (hence all classifiable,  simple, stably projectionless \Cs-algebras by the tensor absorption) and the UHF algebras $U$ of infinite type.
We note that the analogous result for the stabilization of $\mathcal{O}_\infty$ is
a fundamental ingredient of extremely tight \Cs-algebra inclusions around Kirchberg algebras constructed in \cite{SuzMAAN}.
We also remark that these results do not hold true for the \Cs-algebras $\cZ_0$ and $U$ themselves,
because amenable actions of non-amenable groups cannot have an invariant state by Proposition 3.5 of \cite{OS}.
(By the K-theoretical obstruction arising from the Pimsner--Voiculescu exact sequence \cite{PV},
the Cuntz algebra $\mathcal{O}_\infty$ also does not admit the crossed product decompositions, at least over the free groups.)
\begin{Thm}\label{Thm:dec}
For any countable group $\Gamma$, there is an amenable action $\alpha \colon \Gamma \acts A$
on a classifiable, stable, simple, projectionless \Cs-algebra
whose reduced crossed product $A\rtimes_{\alpha} \Gamma$ is
isomorphic to $\mathcal{Z}_0 \otimes \IK$.
\end{Thm}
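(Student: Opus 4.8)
The plan is to stay entirely inside the framework of Section~\ref{section:corr}: I let $\Gamma$ act on the Pimsner algebra only through a free translation action on the coefficient algebra, and then exploit the fact that crossing by $\Gamma$ merely replaces the coefficient algebra $C_0(\Gamma)\otimes(\cdots)$ by $\IK(\ell^2(\Gamma))\otimes(\cdots)$, producing once more an algebra of the form treated in Section~\ref{section:corr}. First I would run the construction of Theorem~\ref{Thm:Exm}, in the trace-normalised variant of Remark~\ref{Rem:Exm}, for $G=\Gamma$. Fix $(g_n)_{n\in\IN}\subset\Gamma$ generating $\Gamma$ as a semigroup with $g_0=g_1=e$, constants $(c_n)_{n\in\IN}\in(0,\infty)^\IN$ with $\sum_n c_n\le 1$, and trace-scaling automorphisms $\alpha_n$ of $\cZ_0\otimes\IK$ with $\tau_0\circ\alpha_n=c_n\cdot\tau_0$; to govern traces later I also build in a Bernoulli shift as in Theorem~\ref{Thm:Ttrace} (harmless, since $\cZ_0\otimes\cZ\cong\cZ_0$). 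On $A_{\mathrm{coeff}}:=C_0(\Gamma)\otimes\cZ_0\otimes\IK$ with the left translation action $\mathrm{L}\otimes\id$, set $\fr:=(\mathrm{R}_{g_n}\otimes\alpha_n)_{n\in\IN}$; these are $\Gamma$-equivariant automorphisms (left and right translations commute) satisfying $(\diamondsuit1)$ and $(\diamondsuit2)$. As in Remark~\ref{Rem:Exm}, $A:=\T\otimes\cZ_0$ is a classifiable, simple, stably projectionless \Cs-algebra, and the induced action $\alpha:=\Theta\otimes\id\colon\Gamma\acts A$ is amenable by Theorem~\ref{Thm:ame} together with the amenability of the free proper action $\Gamma\acts C_0(\Gamma)$.

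The decisive step is to identify $A\rtimes_\alpha\Gamma$. Because $\Gamma$ acts on $\T$ solely through $A_{\mathrm{coeff}}$, crossing by $\Gamma$ commutes with the Toeplitz--Pimsner construction, so $\T\rtimes_\Theta\Gamma$ is the Pimsner algebra of the induced correspondence over $A_{\mathrm{coeff}}\rtimes_{\mathrm{L}}\Gamma$. Here the translation action collapses the commutative tensor factor, $A_{\mathrm{coeff}}\rtimes_{\mathrm{L}}\Gamma=\IK(\ell^2(\Gamma))\otimes\cZ_0\otimes\IK\cong\cZ_0\otimes\IK$, while $\mathrm{R}_{g_n}\otimes\alpha_n$ descends to the automorphism $\rho'_n:=\ad(v_{g_n})\otimes\alpha_n$ of $\cZ_0\otimes\IK$, where $v$ denotes the right regular representation of $\Gamma$. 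Thus $A\rtimes_\alpha\Gamma\cong\mathcal{T}_{\mathcal{E}_{\fr'}}\otimes\cZ_0$ for $\fr':=(\rho'_n)_{n\in\IN}$, now over the \emph{simple} coefficient algebra $\cZ_0\otimes\IK$. The sequence $\fr'$ satisfies $(\diamondsuit2)$ automatically (the coefficient is simple) and $(\diamondsuit1)$ because $g_0=g_1=e$ makes some $\rho'_n$ isometric, and each $\rho'_n$ scales the unique tracial weight of $\cZ_0\otimes\IK$ by $c_n$ (an inner automorphism preserves the trace on $\IK$).

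It then remains to match Elliott invariants. By Theorem~\ref{Thm:Tsimple}, $\mathcal{T}_{\mathcal{E}_{\fr'}}$ is simple, so $A\rtimes_\alpha\Gamma=\mathcal{T}_{\mathcal{E}_{\fr'}}\otimes\cZ_0$ is simple, separable, nuclear, $\cZ_0$-stable (hence $\cZ$-stable and classifiable), stably projectionless, and satisfies the UCT. Theorem~\ref{Thm:Kas} shows that $\mathcal{T}_{\mathcal{E}_{\fr'}}$ is $\KK$-equivalent to $\cZ_0\otimes\IK$, which is $\KK$-equivalent to $\IC$; hence $A\rtimes_\alpha\Gamma$ has $\mathrm{K}_0=\IZ$ and $\mathrm{K}_1=0$, the K-theory of $\cZ_0\otimes\IK$. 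For traces, the Bernoulli shift forces every tracial weight on $\mathcal{T}_{\mathcal{E}_{\fr'}}$ to be gauge invariant (Theorem~\ref{Thm:Ttrace}), so by Theorem~\ref{Thm:KMSc} with $\beta=0$ these weights correspond to tracial weights $\tau'$ on $\cZ_0\otimes\IK$ with $\sum_n\tau'\circ\rho'_n=(\sum_n c_n)\tau'\le\tau'$; as $\cZ_0\otimes\IK$ is monotracial and $\sum_n c_n\le 1$, such a $\tau'$ is unique up to scaling. Thus $A\rtimes_\alpha\Gamma$ and $\cZ_0\otimes\IK$ share the same Elliott invariant (trivially ordered $\mathrm{K}_0=\IZ$, $\mathrm{K}_1=0$, a unique trace), and the classification theorem for simple stably projectionless \Cs-algebras \cite{GLN} yields $A\rtimes_\alpha\Gamma\cong\cZ_0\otimes\IK$, after arranging the harmless stabilisation so that both sides are stable.

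The main obstacle will be the equivariant bookkeeping of the second step: proving cleanly that the reduced crossed product by $\Gamma$ commutes with the Toeplitz--Pimsner construction when $\Gamma$ acts through the coefficient algebra, that this crossing turns $C_0(\Gamma)$ into $\IK(\ell^2(\Gamma))$ and the right translations $\mathrm{R}_{g_n}$ into the inner automorphisms $\ad(v_{g_n})$, and---crucially---that the trace-scaling constants $c_n$ are transported unchanged. Once this identification and the resulting $(\diamondsuit1)$, $(\diamondsuit2)$ are in place, the simplicity, stable projectionlessness, K-theory, and trace uniqueness all follow by direct appeal to Theorems~\ref{Thm:Tsimple}, \ref{Thm:KMSc}, and \ref{Thm:Ttrace}.
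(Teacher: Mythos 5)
Your proposal is correct in outline but takes a genuinely different route from the paper, so let me compare the two. The paper never identifies $\T\rtimes_\Theta\Gamma$ as a Pimsner algebra. Instead it enlarges the algebra to $A=\T\otimes\left(\bigotimes_{\IN}\cZ^{\otimes\Gamma}\right)\otimes\cZ_0$ with $\alpha=\Theta\otimes\left(\bigotimes_{\IN}\sigma\right)\otimes\id_{\cZ_0}$ and analyzes $A\rtimes_\alpha\Gamma$ head-on: simplicity via Kishimoto's theorem \cite{Kis}, the KK-class via Theorem \ref{Thm:Kas} combined with Green imprimitivity and the Kasparov descent functor, and the trace space by showing that every tracial weight factors through the canonical expectation $E\colon A\rtimes_\alpha\Gamma\rightarrow A$ --- this last step being exactly what the extra Bernoulli factor $\bigotimes_{\IN}\cZ^{\otimes\Gamma}$ is inserted for (the partition-of-unity trick of Example 4.14 of \cite{SuzCMP}), after which uniqueness of the $\Gamma$-invariant trace on $\T$ (Theorem \ref{Thm:Ttrace}) finishes the computation. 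You instead commute the crossed product past the Pimsner construction: $\T\rtimes_\Theta\Gamma$ is claimed to be the Toeplitz--Pimsner algebra of the descended correspondence over $(C_0(\Gamma)\otimes\cZ_0\otimes\IK\otimes\cZ^{\otimes\IF_\infty})\rtimes_{\mathrm{L}}\Gamma\cong\IK(\ell^2(\Gamma))\otimes\cZ_0\otimes\IK\otimes\cZ^{\otimes\IF_\infty}$, with $\fr$ descending to $\fr'=(\ad(v_{g_n})\otimes\alpha_n\otimes\sigma_{s_n})_{n\in\IN}$, and then you re-run the paper's own theorems (\ref{Thm:Tsimple}, \ref{Thm:Ttrace}, \ref{Thm:KMSc}, \ref{Thm:Kas}) over this now simple coefficient algebra. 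Your route buys a cleaner trace computation and dispenses with Kishimoto's theorem, the outer Bernoulli tensor factor, and the imprimitivity/descent step; the paper's route buys independence from any theorem about crossed products of Pimsner algebras.

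That last point is the one thing you must not file under ``equivariant bookkeeping'': the identification $\T\rtimes_{\Theta}\Gamma\cong\mathcal{T}_{\E\rtimes_r\Gamma}$ is a Hao--Ng-type isomorphism, a genuine theorem rather than a formal manipulation, and it is the only mathematical content of your argument not already contained in the paper. It is true, and here is how to close it cleanly: since $\mathrm{L}\colon\Gamma\acts C_0(\Gamma)$ is amenable (free and proper), so are $\theta$ and, by Theorem \ref{Thm:ame}, $\Theta$; hence full and reduced crossed products coincide, and for \emph{full} crossed products the isomorphism $\mathcal{T}_{\cE}\rtimes G\cong\mathcal{T}_{\cE\rtimes G}$ is known for arbitrary locally compact groups (it goes back to the work of Hao and Ng, see also Katsoulis--Ramsey); alternatively, verify it directly on the Fock module using $\mathrm{F}(\E\rtimes_r\Gamma)\cong\mathrm{F}(\E)\rtimes_r\Gamma$ and faithfulness of the induced regular representation. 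Either a citation or this argument must be supplied. Smaller repairs: condition $(\diamondsuit 1)$ for $\fr'$ holds simply because every $\rho_n'$ is a $\ast$-automorphism, hence isometric --- the condition $g_0=g_1=e$ plays no role here, and with $\beta=0$ you never need the flow machinery of Theorem \ref{Thm:Rsimple} at all; the classification theorem to cite is \cite{GL}, Theorem 1.2 (as in the paper), and you should note that the trace space of $\mathcal{T}_{\mathcal{E}_{\fr'}}\otimes\cZ_0$ remains a ray (standard, as $\cZ_0$ is nuclear with a unique tracial weight up to scalar); finally, to get stability of $A$ (required by the statement) and an isomorphism with $\cZ_0\otimes\IK$ on the nose rather than up to stabilization, tensor your $A$ with $\IK$ carrying the trivial $\Gamma$-action --- that is the precise form of your ``harmless stabilisation.''
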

\begin{proof}
We first consider the \Cs-algebras
\[C:= \mathcal{Z}_0 \otimes \IK \otimes \mathcal{Z}^{\otimes \mathbb{F}_\infty}, \quad B:=c_0(\Gamma) \otimes C.\]
We equip $B$ with the left translation $\Gamma$-action $\theta:={\rm L}\otimes \id_{C}$.

Choose a sequence $(g_n)_{n\in \IN} \subset \Gamma$ which generates $\Gamma$ as a semigroup.
Fix a tracial weight $\tau_0$ on $\mathcal{Z}_0 \otimes \IK$ (which is unique up to scalar multiple).
Choose a sequence $(\varphi_n)_{n\in \IN}$ of $\ast$-automorphisms on $\mathcal{Z}_0 \otimes \IK$ and $(r_n)_{n\in \IN} \in (0, 1)^{\IN}$
satisfying
\[\tau \circ \varphi_n= r_n\cdot \tau \quad {\rm~for~each~}n\in \IN,\quad \sum_{n\in \IN} r_n \leq 1.\]
Take a free basis $(s_n)_{n\in \IN} \subset \IF_\infty$.
Let $\sigma \colon \mathbb{F}_\infty \acts \mathcal{Z}^{\otimes \mathbb{F}_\infty}$
denote the Bernoulli shift action.
For each $n \in \IN$, define 
\[\rho_n := {\rm R}_{g_n} \otimes \varphi_n \otimes \sigma_{s_n}.\]
Clearly each $\rho_n$ is a $\Gamma$-equivariant $\ast$-automorphism on $B$, and the family $\fr:=(\rho_n)_{n \in \IN}$ satisfies the conditions $(\diamondsuit 1)$ and $(\diamondsuit 2)$ in Section \ref{section:corr}.
Hence the \Cs-algebra $\T$ is simple by Theorem \ref{Thm:Tsimple}.
By Theorem \ref{Thm:ame}, the induced action $\Theta\colon \Gamma \acts \T$ is amenable.
By Theorem \ref{Thm:Ttrace} and Remark \ref{Rem:Ttrace}, the $\Gamma$-\Cs-algebra $\T$ has a unique $\Gamma$-invariant tracial weight up to scalar multiple,
since this is the case for $B$.

We next define $A:=\T \otimes \left(\bigotimes_{\IN} \mathcal{Z}^{\otimes \Gamma}\right) \otimes \mathcal{Z}_0$.
We equip $A$ with the $\Gamma$-action $\alpha:=\Theta \otimes \left(\bigotimes_{\IN} \sigma\right) \otimes \id_{\cZ_0}$.
Here $\sigma \colon \Gamma \acts \mathcal{Z}^{\otimes \Gamma}$ denotes the Bernoulli shift action.
Note that $A$ admits a $\Gamma$-invariant tracial weight $\tau_A$, which is unique up to scalar multiple.
It is not hard to check that $A \rtimes_\alpha \Gamma$ is simple (for instance, use Kishimoto's theorem \cite{Kis}).
It is easy to check that both  \Cs-algebras $A$ and $A \rtimes_\alpha \Gamma$ are classifiable, stable, simple, and projectionless.
By Theorem \ref{Thm:Kas}, $A$ is ${\rm KK}^\Gamma$-equivalent to $B\otimes \left(\bigotimes_{\IN} \mathcal{Z}^{\otimes \Gamma}\right)$.
Then, since $\left(B\otimes \left(\bigotimes_{\IN} \mathcal{Z}^{\otimes \Gamma} \right) \right) \rtimes_{\theta \otimes (\bigotimes_{\IN} \sigma)} \Gamma$ is stably isomorphic to $\mathcal{Z}_0$
(by the Green implimitivity theorem),
the Kasparov reduced crossed product functor \cite{Kas} gives the KK-equivalence between
$A\rtimes_\alpha \Gamma$ and $\cZ_0$.

Now, thanks to the classification theorem of simple stably projectionless \Cs-algebras \cite{GL}, Theorem 1.2,
to show the $\ast$-isomorphism $A\rtimes_\alpha \Gamma \cong \cZ_0\otimes \IK$,
we only need to check that the trace space of $A\rtimes_\alpha \Gamma$ is a ray.
To prove this, it suffices to show that any tracial weight $\tau$ on $A\rtimes_\alpha \Gamma$
factors through the canonical conditional expectation $E\colon A \rtimes_\alpha \Gamma \rightarrow A$: $\tau=\tau\circ E$.
To prove this equality, by Proposition \ref{Prop:trace}, it is enough to show the equality
\[\tau(a \lambda^{(\alpha)}_g)=0 \quad
{\rm~for~all~}a\in {\rm Ped}(\T) \odot \left(\bigodot_{\IN} \mathcal{Z}^{\odot \Gamma}\right) \odot {\rm Ped}(\cZ_0), g\in \Gamma \setminus \{e\}.\]
Here $\lambda^{(\alpha)}_g\in \cM(A\rtimes_\alpha \Gamma)$ denotes the canonical implementing unitary element of $g$,
and `$\odot$' stands for the algebraic tensor product.
By the linearity of $\tau$ and the definition of the Pedersen ideal, we may further assume that there exists $e\in {\rm Ped}(\T) \odot {\rm Ped}(\cZ_0)$
with $ae=ea=a$.
For any such $a$ and $g$, by the observation in Example 4.14 of \cite{SuzCMP},
one can choose a sequence
$b_1, \ldots, b_r\in \left(\bigotimes_{\IN} \mathcal{Z}^{\otimes \Gamma}\right)_+$
satisfying
\[\sum_{i=1}^r b_i^2=1,\quad \sum_{i=1}^r b_i a \alpha_g(b_i)=0.\]
This proves
\[\tau(a \lambda^{(\alpha)}_g)=\sum_{i=1}^r \tau(b_ie b_i a \lambda^{(\alpha)}_g)=\sum_{i=1}^r \tau(b_i a \alpha_g(b_i) \lambda^{(\alpha)}_g e)=0.\]
\end{proof}
The strategy of the proof in the UHF algebra case is essentially the same as the proof of Theorem \ref{Thm:dec},
and we only need to replace some ingredients therein.
We therefore only give a sketch of the proof.
We note that this in particular gives the first examples of amenable actions (of non-amenable groups) on simple AF-algebras.
\begin{Thm}
For any countable group $\Gamma$ and any UHF algebra $U$ of infinite type,
there is an amenable action $\alpha \colon \Gamma \acts A$
on a simple AF-algebra $A$
whose reduced crossed product $A\rtimes_{\alpha} \Gamma$ is
isomorphic to $U \otimes \IK$.
\end{Thm}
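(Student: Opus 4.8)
The plan is to run the construction in the proof of Theorem~\ref{Thm:dec} essentially verbatim, replacing the stably projectionless building blocks by UHF ones. Fix a sequence $(g_n)_{n\in\IN}\subset\Gamma$ generating $\Gamma$ as a semigroup, a free basis $(s_n)_{n\in\IN}$ of $\IF_\infty$, and the Bernoulli shift $\sigma\colon\IF_\infty\acts U^{\otimes\IF_\infty}$. Since $U$ is of infinite type it is strongly self-absorbing, and $U\otimes\IK$ carries trace-scaling automorphisms whose scaling constants range over the multiplicative subgroup $\Lambda_U\subset(0,\infty)$ generated by the primes dividing the supernatural number of $U$. As $\Lambda_U$ contains arbitrarily small positive numbers, I can choose $\varphi_n\in\Aut(U\otimes\IK)$ with $\tau_0\circ\varphi_n=r_n\cdot\tau_0$ for constants $r_n\in\Lambda_U$ satisfying $\sum_{n\in\IN}r_n\le 1$, where $\tau_0$ is the tracial weight on $U\otimes\IK$ (unique up to scalar).

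On $B:=c_0(\Gamma)\otimes U\otimes\IK\otimes U^{\otimes\IF_\infty}$ with the left-translation action $\theta:={\rm L}\otimes\id$, I would set $\rho_n:={\rm R}_{g_n}\otimes\varphi_n\otimes\sigma_{s_n}$ and $\fr:=(\rho_n)_{n\in\IN}$. As before $\fr$ consists of $\Gamma$-equivariant $\ast$-endomorphisms satisfying $(\diamondsuit 1)$ and $(\diamondsuit 2)$, so $\T$ is simple (Theorem~\ref{Thm:Tsimple}) and the induced action $\Theta\colon\Gamma\acts\T$ is amenable (Theorem~\ref{Thm:ame}); since $\sum r_n\le 1$ gives the inequality $(\clubsuit$-$0)$ for the tensor-product trace, $\T$ carries a unique $\Gamma$-invariant tracial weight up to scalar by Theorem~\ref{Thm:Ttrace} and Remark~\ref{Rem:Ttrace}(4) (which permits the strongly self-absorbing $U$ in place of $\cZ$). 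I then put $A:=\T\otimes\big(\bigotimes_\IN U^{\otimes\Gamma}\big)\otimes U$ with $\alpha:=\Theta\otimes\big(\bigotimes_\IN\sigma\big)\otimes\id_U$, where now $\sigma\colon\Gamma\acts U^{\otimes\Gamma}$ is the Bernoulli shift; this $A$ is simple, separable, nuclear, $U$-stable (hence $\cZ$-stable), and satisfies the UCT by Theorem~\ref{Thm:Kas}.

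The crossed product is identified exactly as in Theorem~\ref{Thm:dec}. It is simple by Kishimoto's theorem~\cite{Kis}, and the computation of Example~4.14 of \cite{SuzCMP}, applied to the Bernoulli $\Gamma$-action on $\bigotimes_\IN U^{\otimes\Gamma}$ (centrally $\Gamma$-free since $U$ is strongly self-absorbing), shows that every tracial weight on $A\rtimes_\alpha\Gamma$ factors through the canonical conditional expectation onto $A$, so its trace space is a ray. Moreover $B\rtimes_\theta\Gamma\cong U\otimes\IK$ stably, since $c_0(\Gamma)\rtimes_{\rm L}\Gamma\cong\IK(\ell^2(\Gamma))$ and $U\otimes U^{\otimes\IF_\infty}\cong U$; feeding this through the Kasparov crossed-product functor together with Theorem~\ref{Thm:Kas} yields $A\rtimes_\alpha\Gamma\sim_{\KK}U$, so $A\rtimes_\alpha\Gamma$ satisfies the UCT with $K_0\cong K_0(U)$ and $K_1=0$.

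It remains to upgrade this to an AF identification, which is the genuinely new step. Both $A$ and $A\rtimes_\alpha\Gamma$ are simple, separable, nuclear, $U$-stable UCT algebras with $K_1=0$ whose traces pair with $K_0$ through the dense subgroup $K_0(U)\subset\mathbb{Q}$; hence both have real rank zero and an Elliott invariant of AF type, so by the classification theorem each is a simple AF algebra. Matching the invariant of $A\rtimes_\alpha\Gamma$ (a torsion-free dimension group $K_0\cong K_0(U)$ with its natural order, trivial $K_1$, and a single ray of traces) with that of $U\otimes\IK$ and invoking Elliott's classification of AF algebras \cite{Ell} then gives $A\rtimes_\alpha\Gamma\cong U\otimes\IK$. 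The main obstacle, absent from Theorem~\ref{Thm:dec}, is precisely this real-rank-zero/AF step: unlike the stably projectionless case the order on $K_0$ is now nontrivial, so one must both confine the scaling constants $r_n$ to the rigid group $\Lambda_U$ and verify that the trace pairs densely enough with $K_0$ to force real rank zero.
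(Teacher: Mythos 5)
Your proposal is correct and takes essentially the same route as the paper's own (sketched) proof: feed UHF-compatible trace-scaling automorphisms (scaling factors built from divisors of the supernatural number of $U$) into the machinery of Theorem \ref{Thm:dec}, identify $A\rtimes_\alpha\Gamma\cong U\otimes\IK$ by the classification theorem, and then upgrade $A$ to an AF-algebra via real rank zero together with $K_1(A)=\{0\}$ and torsion-free $K_0(A)$ from Theorem \ref{Thm:Kas}. The only differences are inessential: the paper keeps $\mathcal{Z}^{\otimes\IF_\infty}$ as the Bernoulli factor (so that Example 4.14 of \cite{SuzCMP} applies verbatim, whereas your $U^{\otimes\IF_\infty}$ substitution needs the additional, true but unproved, check that UHF Bernoulli shifts are centrally free), it takes $A:=\T\otimes U$ with $r_n=N^{-(n+1)}$ for a fixed $N$ dividing the supernatural number, and it makes your ``dense pairing forces real rank zero and an AF invariant'' step precise by citing that projections separate tracial weights, R{\o}rdam's theorem \cite{Ror2} for real rank zero, and Zhang's theorem \cite{Zha} for the dimension-group property of $K_0(A)$, before invoking the classification theorem (\cite{Win}, Theorem D).
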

\begin{proof}
We define
\[C:= U \otimes \IK \otimes \mathcal{Z}^{\otimes \mathbb{F}_\infty}, \quad B:=c_0(\Gamma) \otimes C.\]
We equip $B$ with the left translation $\Gamma$-action $\theta:={\rm L}\otimes \id_{C}$.

Choose a sequence $(g_n)_{n\in \IN} \subset \Gamma$ which generates $\Gamma$ as a semigroup.
Fix a tracial weight $\tau$ on $U \otimes \IK$ (which is unique up to scalar multiple).
Take $N \in \IN \setminus \{0, 1\}$ which divides the supernatural number corresponding to $U$.
Choose a sequence $(\varphi_n)_{n\in \IN}$ of $\ast$-automorphisms on $U \otimes \IK$
satisfying
\[\tau_0 \circ \varphi_n= \frac{1}{N^{n+1}} \cdot \tau_0 \quad {\rm~for~each~}n\in \IN.\]
Take a free basis $(s_n)_{n\in \IN} \subset \IF_\infty$.
Let $\sigma \colon \mathbb{F}_\infty \acts \mathcal{Z}^{\otimes \mathbb{F}_\infty}$
denote the Bernoulli shift action.
Define
\[\fr:= ({\rm R}_{g_n} \otimes \varphi_n \otimes \sigma_{s_n})_{n\in \IN}.\]
Then, by the same proof as in Theorem \ref{Thm:dec}, it follows that the \Cs-algebra $A:=\T \otimes U$ is simple and classifiable,
and by the classification theorem of simple \Cs-algebras with a nontrivial projection (for the precise statement, see \cite{Win}, Theorem D), the resulting action
$\alpha \colon \Gamma \acts A$ satisfies
$A\rtimes_\alpha \Gamma\cong U \otimes \IK$.

Now it is remained to show that $A$ is an AF-algebra.
Note that $A\otimes U \cong A$.
By Theorems \ref{Thm:Ttrace} and \ref{Thm:Kas},
the projections of $A$ separate tracial weights on $A$ (because this is the case for $B$).
Then, R{\o}rdam's theorem \cite{Ror2}
implies that $A$ is of real rank zero.
Again by Theorem \ref{Thm:Kas}, K$_0(A)$ is torsion-free, and K$_1(A)=\{0\}$.
By Theorem 1.6 of \cite{Zha}, the ordered group K$_0(A)$ is a dimension group.
The classification theorem (see \cite{Win}, Theorem D)
shows that the \Cs-algebra $A$ is in fact an AF-algebra.
\end{proof}

\subsection*{Acknowledgements}
This work was supported by JSPS KAKENHI (Grant-in-Aid for Early-Career Scientists)
Grant Numbers JP19K14550, JP22K13924.


\begin{thebibliography}{99}
\bibitem{AD79} C.~ Anantharaman-Delaroche, {\it Action moyennable d'un groupe localement compact sur une alg\`ebre de von Neumann.} Math.~ Scand.~ {\bf 45} (1979), 289--304.
\bibitem{AD}C.~Anantharaman-Delaroche, {\it Syst\`{e}mes dynamiques non commutatifs et moyennabilit\'e}, Math.~ Ann.~ {\bf 279} (1987), 297--315.
\bibitem{BO} N.~ P.~ Brown, N.~Ozawa, {\it \Cs-algebras and finite-dimensional approximations.} Graduate Studies in Mathematics {\bf 88}. American Mathematical Society, Providence, RI, 2008.

\bibitem{BEW2} A.~Buss, S.~ Echterhoff, R.~Willett,
{\it Amenability and weak containment for actions of locally compact groups on \Cs-algebras.} To appear in Mem.~Amer.~Math.~Sci., arXiv:2003.03469v6.
\bibitem{Con}A.~ Connes, {\it A survey of foliations and operator algebras.} Proc.~ Sympos.~ Pure Math. (1) {\bf 38} (1982), 521--628.
\bibitem{Cun}J.~Cuntz, {\it $K$-theory for certain \Cs-algebras II.} J.~ Operator Theory {\bf 5} (1981), no. 1, 101--108.
\bibitem{Ell}G.~A.~ Elliott, {\it An invariant for simple \Cs-algebras.} Canadian Mathematical Society, 1945--1995, vol. 3, 61--90 (1996), Canadian Math. Soc.,
Ottawa.
\bibitem{EV}G.~A.~ Elliott, J.~ Villadsen, {\it Perforated ordered $K_0$-groups.} Canad.~ J.~ Math.~ {\bf 52} (2000), no. 6, 1164--1191.
\bibitem{FS}T.~ Fack, G.~ Skandalis, {\it Connes’ analogue of the Thom isomorphism for the Kasparov
groups.} Invent.~ Math.~ {\bf 64} (1981), 7--14.
\bibitem{GS1} J.~ Gabe, G.~ Szab\'o, {\it The stable uniqueness theorem for equivariant Kasparov theory.} Preprint, arxiv:2202.09809v2.
\bibitem{GS2} J.~ Gabe, G.~ Szab\'o, {\it The dynamical Kirchberg--Phillips theorem.} Preprint, arxiv:2205.04933v2.
\bibitem{GL} G.~Gong, H.~Lin, {\it On classification of simple non-unital amenable C*-algebras, II.} Preprint, arXiv:1702.01073v4.
\bibitem{GLN} G.~Gong, H.~Lin, Z.~Niu, {\it Classification of finite simple amenable $\mathcal{Z}$-stable \Cs-algebras, I. \Cs-algebras with generalized tracial rank one.} 
C.~ R.~ Math.~ Acad.~ Sci.~ Soc.~ R.~ Can.~ {\bf 42}(3) 2020, 63--450.
\bibitem{HK}N.~ Higson, G.~ Kasparov, {\rm E-theory and KK-theory for groups which act properly and isometrically on Hilbert space.} Invent.~ Math., {\bf 144}(1) (2001), 23--74.
\bibitem{Kas} G.~ Kasparov, {\it Equivariant KK-theory and the Novikov conjecture.} Invent.~ Math.~ {\bf 91} (1988),
no. 1,147--201.
\bibitem{Kat}T.~Katsura, {\it On crossed products of the Cuntz algebra ${\mathcal O}_\infty$ by quasi-free actions of abelian groups.} Operator algebras and mathematical physics, 209--233, Theta, Bucharest, (2003).
\bibitem{Kir} E.~ Kirchberg, {\it The classification of purely infinite \Cs-algebras using Kasparov's theory.} Preprint.
\bibitem{KP}E.~ Kirchberg, N.~ C.~ Phillips, {\it Embedding of exact \Cs-algebras in the Cuntz algebra $\mathcal{O}_2$.} J.~ reine
angew.~ Math.~ {\bf 525} (2000), 17--53.
\bibitem{Kis}A.~Kishimoto, {\it Outer automorphisms and reduced crossed products of simple \Cs-algebras.} Commun.~ Math.~ Phys.~ {\bf 81} (1981), no. 3, 429--435.

\bibitem{KK} A.~ Kishimoto, A.~ Kumjian, {\it Simple stably projectionless \Cs-algebras arising as crossed products.} Canad.~ J.~ Math.~ {\bf 48} (1996), no. 5, 980--996.
\bibitem{KK2}A.~ Kishimoto, A.~ Kumjian, {\it Crossed products of Cuntz algebras by quasi-free automorphisms. Operator algebras and their applications,}, Fields Inst. Commun., 13, 173--192, Amer.~ Math.~ Soc., Providence, RI, 1997.
\bibitem{Lan}E.~ C.~ Lance, {\it Hilbert \Cs-modules: a toolkit for operator algebraists.} LMS Lecture Note Series {\bf 210}, Cambridge University Press, Cambridge, 1995.
\bibitem{Kum}A.~ Kumjian, {\it On certain Cuntz--Pimsner algebras.} Pacific J.~ Math.~ {\bf 217} (2004), no. 2, 275--289.
\bibitem{Mey}R.~ Meyer, {\it On the classification of group actions on \Cs-algebras up to equivariant KK-equivalence.} Ann. K-Theory {\bf 6} (2021), 157--238.
\bibitem{MN}R.~ Meyer, R.~ Nest, {\it The Baum-Connes Conjecture via Localisation of Categories.} Topology
{\bf 45} (2006), no. 2, 209--259.
\bibitem{Naw}N.~Nawata, {\it Equivariant Kirchberg--Phillips type absorption for the Razak--Jacelon algebra.} Preprint, arXiv:2109.13151v3.

\bibitem{OS}N.~ Ozawa, Y.~ Suzuki, {\it On characterizations of amenable \Cs-dynamical systems and new
examples.} Selecta Math.~ (N.S.), {\bf 27} (2021), Article number: 92 (29 pages).

\bibitem{Pedbook}G.~Pedersen, {\it \Cs-algebras and their automorphism groups.} 
Pure and Applied Mathematics, Academic Press, London, 2018. Second edition.

\bibitem{Phi}N.~ C.~ Phillips, {\it A classification theorem for nuclear purely infinite simple \Cs-algebras.} Doc.~ Math.~ {\bf 5} (2000), 49--114.
\bibitem{Pim}M.~ V.~ Pimsner, {\it A class of \Cs-algebras generalizing both Cuntz--Krieger algebras and crossed
products by $\IZ$.} Free probability theory, 189--212, Fields Inst. Commun., {\bf 12} (1997), Amer. Math. Soc.,
Providence, RI.
\bibitem{PV}M.~ Pimsner, D.~ Voiculescu, {\it K-groups of reduced crossed products by free groups.} J.~ Operator Theory
{\bf 8} (1982), 131--156.
\bibitem{Ror2}M.~ R{\o}rdam, {\it On the structure of simple \Cs-algebras tensored with a UHF algebra, II.} J.~ Funct.~ Anal.,
{\bf 107} (1992), 255--269.
\bibitem{RS}J.~ Rosenberg, C.~ Schochet, {\it The K\"{u}nneth theorem and the universal coefficient theorem for Kasparov’s
generalized K-functor.} Duke Math.~ J.~ {\bf 55} (1987), no. 2, 431--474.

\bibitem{Sat}Y. Sato {\it Actions of amenable groups and crossed products of $\mathcal{Z}$-absorbing \Cs-algebras.} Adv.~ Stud.~ Pure Math.~ {\bf 80} (2019), no. 5, 189--210.
\bibitem{Suzeq}Y.~Suzuki, {\it Simple equivariant \Cs-algebras whose full and reduced crossed products coincide.}
J.~ Noncommut.~ Geom. {\bf 13} (2019), 1577--1585.

\bibitem{SuzCMP} Y.~ Suzuki, {\it Complete descriptions of intermediate operator algebras by intermediate extensions of dynamical systems.} Commun.~ Math.~ Phys.~ {\bf 375} (2020), 1273--1297.
\bibitem{Suz21}Y.~Suzuki, {\it Equivariant $\mathcal{O}_2$-absorption theorem for exact groups.} Compos. Math. \textbf{157} (2021), 1492--1506.
\bibitem{SuzMAAN}Y.~Suzuki, {\it Non-amenable tight squeezes by Kirchberg algebras.}
Math.~ Ann.~ {\bf 382} (2022), 631--653.
\bibitem{Suzsf}Y.~Suzuki, {\it Every countable group admits amenable actions on stably finite simple \Cs-algebras.}
Preprint, arXiv:2204.04480
\bibitem{Sza} G.~ Szab\'o, {\it Equivariant Kirchberg--Phillips-type absorption for amenable group actions.}
Comm.~ Math.~ Phys., {\bf 361} (2018), no. 3, 1115--1154.
\bibitem{SW}G.~ Szab\'o, L.~ Wouters, {\it Equivariant property Gamma and the tracial local-to-global principle for \Cs-dynamics.} arXiv:2301.12846v2.
\bibitem{Thom}K.~Thomsen, {\it An introduction to KMS weights I, II, III.}
Preprint, arXiv:2204.01125v5.
\bibitem{Tak}M.~ Takesaki, {\it Duality for crossed products and the structure of von Neumann algebras of type III.} Acta Math.~ {\bf 131} (1973), 249--310.
\bibitem{Win}W.~ Winter, {\it Structure of nuclear \Cs-algebras: From quasidiagonality to classification, and back again.} Proc.~ Int.~ Congr.~ Math.~ (2017), 1797--1820.
\bibitem{Zha}S.~ Zhang, {\it A Riesz decomposition property and ideal structure of multiplier algebras.} J.~ Operator Theory {\bf 24} (1990), 209--225.

\end{thebibliography}
\end{document}